\documentclass[twoside]{article}
\usepackage[letterpaper, left=3.5cm, right=3.5cm, top=4cm, bottom=2.5cm]{geometry}
\usepackage{graphicx}
\usepackage{amsmath,amssymb,amsthm}
\usepackage{authblk}
\usepackage[utf8]{inputenc}
\usepackage{microtype}
\usepackage{mathrsfs} 
\usepackage{enumitem}
\usepackage{calc}
\usepackage{esint}
\usepackage{fancyhdr}
\usepackage{xcolor}
\usepackage[labelfont = bf]{caption}
\usepackage{doi}
\usepackage{subfigure}
\usepackage[numbers]{natbib}
\usepackage{float}
\usepackage{stmaryrd}
\usepackage{mathtools}
\usepackage{booktabs}
\setlength{\aboverulesep}{0pt}
\setlength{\belowrulesep}{0pt}
\usepackage{colortbl}
\usepackage{tabulary}
\usepackage{diagbox}
\usepackage{caption}

\usepackage{physics}
\usepackage{amsmath}
\usepackage{tikz}
\usepackage{mathdots}
\usepackage{yhmath}
\usepackage{cancel}
\usepackage{color}
\usepackage{siunitx}
\usepackage{array}
\usepackage{multirow}
\usepackage{amssymb}
\usepackage{gensymb}
\usepackage{tabularx}
\usepackage{extarrows}
\usepackage{booktabs}
\usetikzlibrary{fadings}
\usetikzlibrary{patterns}
\usetikzlibrary{shadows.blur}
\usetikzlibrary{shapes}

\newtheorem{theorem}{Theorem}[section]
\numberwithin{equation}{section}
\newtheorem{proposition}[theorem]{Proposition}

\newtheorem{remark}[theorem]{Remark}
\newtheorem{lemma}[theorem]{Lemma}

\newtheorem{algorithm}[theorem]{Algorithm}

\usepackage{titlesec}
\titleformat{\section}{\normalfont\scshape\centering}{\thesection.}{0.5em}{}
\titleformat*{\subsection}{\itshape}
\titleformat*{\subsubsection}{\itshape}

\providecommand{\keywords}[1]
{
	{\small\emph{Keywords:} #1}
}

\providecommand{\MSC}[1]
{
	{\small\emph{AMS MSC (2020):~~} #1}
}

\definecolor{denim}{rgb}{0.08, 0.38, 0.74}
\definecolor{byzantium}{rgb}{0.44, 0.16, 0.39} 
\definecolor{shamrockgreen}{rgb}{0.0, 0.62, 0.38} 

\usepackage{hyperref}
\hypersetup{
	colorlinks=true,
	linkcolor=denim,
	citecolor = shamrockgreen,
	filecolor=magenta, 
	urlcolor=byzantium,
} 

\providecommand{\jumptmp}[2]{#1\llbracket{#2}#1\rrbracket}
\providecommand{\jump}[1]{\jumptmp{}{#1}}


\begin{document}
	\setlength{\abovedisplayskip}{5.5pt}
	\setlength{\belowdisplayskip}{5.5pt}
	\setlength{\abovedisplayshortskip}{5.5pt}
	\setlength{\belowdisplayshortskip}{5.5pt}

	\title{\vspace*{-5mm}Variational problems with gradient constraints:\\  
 \textit{A priori} and \textit{a posteriori} error identities\thanks{HA and RK are partially supported by the Office of Naval Research (ONR) under Award NO: N00014-24-1-2147, NSF grant DMS-2408877, the Air Force Office of Scientific Research (AFOSR) under Award NO: FA9550-22-1-0248.}}
	\author[1]{Harbir Antil\thanks{Email: \texttt{hantil@gmu.edu}}}
	\author[2]{Sören Bartels\thanks{Email: \texttt{bartels@mathematik.uni-freiburg.de}}}
	\author[3]{Alex Kaltenbach\thanks{Email: \texttt{kaltenbach@math.tu-berlin.de}}}
	\author[4]{Rohit Khandelwal\thanks{Email: \texttt{rkhandel@gmu.edu}}}
	\date{\today}
	\affil[2]{\small{Department of Applied Mathematics, University of Freiburg, Hermann--Herder--Str. 10,  79104  Freiburg, GERMANY}}
	\affil[1,4]{\small{Department of Mathematical Sciences and the Center for Mathematics and Artificial Intelligence (CMAI), George Mason University, Fairfax, VA 22030, USA.}}
	\affil[3]{\small{Institute of Mathematics, Technical University of Berlin, Stra\ss e des 17.\ Juni 135, 10623 Berlin, GERMANY}}
	\maketitle

	\pagestyle{fancy}
	\fancyhf{}
	\fancyheadoffset{0cm}
	\addtolength{\headheight}{-0.25cm}
	\renewcommand{\headrulewidth}{0pt} 
	\renewcommand{\footrulewidth}{0pt}
	\fancyhead[CO]{\textsc{variational problems with gradient constraints: error identities}}
	\fancyhead[CE]{\textsc{H. Antil, S. Bartels, A. Kaltenbach, and R. Khandelwal}}
	\fancyhead[R]{\thepage}
	\fancyfoot[R]{}
	
	\begin{abstract}
		In this paper, on the basis of a (Fenchel) duality theory on the continuous level, we derive an \textit{a posteriori} error identity for arbitrary conforming approximations of a primal formulation and a dual formulation of  variational problems involving gradient constraints.
		In addition, on the basis of a (Fenchel) duality theory on the discrete level, we derive an \textit{a priori} error identity that applies
		to the approximation of the primal formulation using the Crouzeix--Raviart element and  to  the approximation of the dual formulation using the Raviart--Thomas element, and leads to error decay rates that are optimal with respect to the regularity of a dual solution.
	\end{abstract}
	
	\keywords{Variational problems with gradient constraints, Elasto-plastic torsion; Crouzeix--Raviart element; Raviart--Thomas element, \emph{a priori} error identity; \emph{a posteriori} error identity.}
	
	\MSC{49J40; 49M29; 65N30; 65N15; 65N50.}
	
	\section{Introduction}\thispagestyle{empty}
	\hspace{5mm}
The present article aims to study the following variational problem with gradient constraints:\enlargethispage{10mm} 
\begin{equation}\label{eq:var_model}
\begin{aligned} 
		& \min_{v \in K} \big\{ I(v) \coloneqq  \tfrac{1}{2}\| \nabla v\|_{2,\Omega}^2-(f,v)_{\Omega}-\langle g,v\rangle_{\Gamma_N} \big\}
        & \mbox{}
\end{aligned}   
\end{equation}
over the convex set 
\begin{align*}
    K\coloneqq \big\{v\in W^{1,\infty}(\Omega)\mid \vert \nabla v\vert\leq \zeta \text{ a.e.\ in }\Omega\,,\; v=u_D\text{ a.e.\ on }\Gamma_D\big\}\,.
\end{align*}
Here, $\Omega \subseteq \mathbb{R}^d$, $d \in \mathbb{N}$, is a bounded simplicial Lipschitz domain with (topological) boundary~$\partial\Omega$ that is split into the Dirichlet (\textit{i.e.}, $\Gamma_D$) and Neumann (\textit{i.e.}, $\Gamma_N$) parts.~The~\mbox{functions}~${f\hspace*{-0.1em}\in \hspace*{-0.1em}L^1(\Omega)}$,~$g\hspace*{-0.1em}\in\hspace*{-0.1em} W^{-1,1}(\Gamma_N)$, $u_D\in W^{1,\infty}(\Gamma_D)$ represent the load, the Neumann and the Dirichlet boundary~data, respectively. Apart from that, the function $\zeta\in L^\infty(\Omega)$ represents the gradient obstacle function. Under generic assumptions on the data (\textit{cf}.~Section~\ref{sec:continuous}), by standard arguments, one can establish that \eqref{eq:var_model} admits a unique solution (\textit{cf}.\ \cite{MR0464857,MR0567696}).
 
This model problem arises in various applications, \textit{e.g.}, (i) \emph{elasto-plastic torsion in structural  engineering}  (\textit{cf}.\ \cite[Chap.\ 2]{MR0597520}), 
(ii) \emph{stochastic optimal control} (\textit{cf}.\ \cite{soner1991free}), 
(iii)~\emph{mathematical~finance} (\textit{cf}.~\cite{MR1284980}). Despite it being of significant interest, there exist (a) no provably convergent solvers, which also take discretization into account, for this problem  and (b) no \textit{a posteriori} or \textit{a priori} error estimates in full generality. The goal of this paper is to close both these~gaps.~During~this~process, several new tools, which are of interest on their own, have been developed.

A key challenge in developing a solver for \eqref{eq:var_model} is on how to handle the projection onto the constraint set $K$. This is highly non-trivial no matter what  discretization is used.~To~overcome~this, motivated by \cite{BartelsKaltenbachOverview24}, we consider a primal-dual formulation. The main contributions are stated next:\vspace{-1.5mm}\enlargethispage{8mm}

\subsection{Main contributions}\vspace{-1mm}

\begin{enumerate}[noitemsep,topsep=2pt,leftmargin=!,labelwidth=\widthof{\quad 6.},font=\itshape]
    \item This article works under {\it full generality} of problem \eqref{eq:var_model}: more precisely, for the data~in~\eqref{eq:var_model}, we only assume that
    $f\in L^1(\Omega)$, $g\in W^{-1,1}(\Gamma_N)$, $\zeta\in L^\infty(\Omega)$, and $u_D\in W^{1,\infty}(\Gamma_D)$ such that there exists a trace lift $\widehat{u}_D\in W^{1,\infty}(\Omega)$ satisfying $\smash{\|\frac{\nabla\widehat{u}_D}{\zeta}\|_{\infty,\Omega}}<\infty$. In this context,~the Sobolev space $W^{1,\infty}(\Omega)$ turns out to provide the correct norm topology~on~the~convex~set~$K$, so that well-established convex duality methods (\textit{e.g.}, the celebrated Fenchel duality theorem (\emph{cf}.\ \cite[Rem.\ 4.2, (4.21), p.\ \hspace*{-0.1mm}61]{ET99})) can be applied.
    
    \item A {\it thorough characterization} of the (Fenchel) dual problem (in the sense of \cite[Rem.\ 4.2, p.\ 60/61]{ET99}) as well as convex optimality relations are provided in Theorem~\ref{thm:duality}, including a strong duality relation. The derived (Fenchel) dual problem is defined on $(L^\infty(\Omega))^*$ (\textit{i.e.}, the dual space of essentially bounded Lebesgue measurable functions $L^\infty(\Omega)$), which~is isometrically isomorphic to $(\mathrm{ba}(\Omega))^d$ (\textit{i.e.}, the space of bounded and finitely additive~vector~measures). 

    \item At the continuous level, for arbitrary conforming approximations of the primal and dual problem, an {\it a posteriori} error estimator (\textit{i.e.}, \emph{primal-dual gap estimator}) is derived in~Lemma~\ref{lem:primal_dual_gap_estimator}. This is followed by the \emph{primal-dual total error} in Lemma~\ref{lem:strong_convexity_measures}. Both these error quantities are shown to be exactly equal by the \emph{primal-dual gap identity} in Theorem~\ref{thm:prager_synge_identity}.

    \item The results from the second and third bullets above also hold at the {\it discrete level for appropriate finite element discretizations}. In particular, Theorem~\ref{thm:discrete_duality} establishes a strong discrete duality relation and convex optimality relations. Theorem~\ref{thm:discrete_reconstruction} shows the reconstruction of the discrete primal solution from the discrete dual solution via an {\it inverse generalized~Marini~formula}.

    \item Discrete primal-dual gap estimator and discrete primal-dual total errors are respectively derived in Lemmas~\ref{lem:discrete_primal_dual_gap_estimator} and \ref{lem:discrete_strong_convexity_measures}. They are again shown to be equal in Theorem~\ref{thm:discrete_prager_synge_identity}. This is followed by \emph{a priori error estimates} in Theorem~\ref{thm:apriori_identity}.

    \item 
    The outer loop of the main algorithm drives the primal-dual gap estimator to zero~(\textit{e.g.}, via mesh refinement). The inner loop consists of a sub-problem solve (dual problem solve). An example of inner loop via gradient flow is provided in Algorithm~\ref{algorithm}. Other alternatives include Newton or quasi-Newton methods. The convergence of inner loop is established in Proposition~\ref{prop:stability}.\vspace{-1.5mm}\enlargethispage{5mm}
\end{enumerate}

\subsection{Related contributions}\vspace{-1mm}
\hspace{5mm}The existing body of literature largely either focuses on the theoretical analysis or on the numerical analysis of simplified problems: 
The contributions \cite{bildhauer2009elastic,MR4432040,MR4593745} provide a numerical~study~of~\eqref{eq:var_model}. They operate under an simplified scenario, in which $f\in L^1(\Omega)$ and $\zeta\in L^\infty(\Omega)$ are constants and $\partial\Omega = \Gamma_D$ with $u_D = 0$. Under these assumptions,  problem \eqref{eq:var_model} simplifies to the classical~obstacle problem, 
where the obstacle function is given via the distance function to boundary $\partial\Omega$ (\textit{cf}.\ \cite[Thm.\ 1.2]{MR1989924}). They derive \textit{a priori} error estimates using classical techniques and under higher regularity assumptions. The contribution \cite{bildhauer2009elastic} equally derives an \textit{a posteriori} error estimator using a convex duality approach. However, it poses the primal problem on~the~Sobolev~space~$W_0^{1,2}(\Omega)$. As we notice in this paper, this could lead to an ill-posed dual problem as, then, the celebrated Fenchel duality theorem  (\emph{cf}.\ \cite[Rem.\ 4.2, (4.21), p.\ \hspace*{-0.1mm}61]{ET99})~is~not~applicable. The recent contribution~\cite{unknown} treats non-constant $f\in L^1(\Omega)$ and imposes the gradient  constraint via penalization. 
\textit{A priori} error estimates between the penalized solution and its approximation are derived. However, the complete \textit{a priori} error estimate is stated as an open problem under \cite[Rem.\ 6.1]{unknown}.

	\section{Preliminaries}\label{sec:preliminaries}\vspace{-1mm}
	
	\hspace{5mm}Throughout the entire  paper,  let ${\Omega\subseteq \mathbb{R}^d}$, ${d\in\mathbb{N}}$, be a bounded simplicial Lipschitz domain such that $\partial\Omega$ is divided into two disjoint
	(relatively) open sets:~a~\mbox{Dirichlet}~part~${\Gamma_D\subseteq \partial\Omega}$~with~${\vert \Gamma_D\vert>0}$\footnote{For a (Lebesgue) measurable set $M\subseteq \mathbb{R}^d$, $d\in \mathbb{N}$, we denote by $\vert M\vert $ its $d$-dimensional Lebesgue measure. For a $(d-1)$-dimensional submanifold $M\subseteq \mathbb{R}^d$, $d\in \mathbb{N}$, we denote by $\vert M\vert $ its $(d-1)$-dimensional~Hausdorff~measure.} and a Neumann part $\Gamma_N\subseteq \partial\Omega$ such that $\partial\Omega=\overline{\Gamma}_D\cup\overline{\Gamma}_N$.

    For a (Lebesgue) measurable set $\omega\subseteq \mathbb{R}^n$, $n\in \mathbb{N}$, and  (Lebesgue) measurable functions or vector fields $v,w\colon \omega\to \mathbb{R}^{\ell}$, $\ell\in\mathbb{N}$, we employ~the~inner~product $(v,w)_{\omega}\coloneqq \int_{\omega}{v\odot w\,\mathrm{d}x}$, 
	whenever the right-hand side is well-defined, where $\odot\colon \mathbb{R}^{\ell}\times \mathbb{R}^{\ell}\to \mathbb{R}$ either denotes scalar multiplication~or~the Euclidean inner product. The integral mean over a (Lebesgue)~\mbox{measurable}~set~${\omega\subseteq\mathbb{R}^n}$,~${n\in \mathbb{N}}$,~with $\vert \omega\vert>0$ of an integrable function or vector field $v\colon \omega\to \mathbb{R}^{\ell}$, $\ell\in\mathbb{N}$,~is~defined~by~${\langle v\rangle_\omega\coloneqq \smash{\frac{1}{\vert \omega\vert}\int_{\omega}{v\,\mathrm{d}x}}}$.\vspace{-1mm}
	
	\subsection{Classical function spaces}\vspace{-0.5mm}
	
	\hspace*{5mm}For $m\in \mathbb{N}$, $p\in [1,\infty]$, and an open set $\omega\subseteq \mathbb{R}^n$, $n\in \mathbb{N}$,~we~define 
	\begin{align*} 
		W^{m,p}(\omega)\coloneqq \big\{v\in L^p(\omega)\mid \mathrm{D}^{\boldsymbol{\alpha}} v\in L^p(\omega)\textup{ for all }\boldsymbol{\alpha}\in (\mathbb{N}_0)^n\text{ with }\vert \boldsymbol{\alpha}\vert \leq m\big\}\,,
	\end{align*}
	where $\mathrm{D}^{\boldsymbol{\alpha}}\coloneqq \frac{\partial^{\vert \boldsymbol{\alpha}\vert }}{\partial x_1^{\alpha_1}\cdot\ldots\cdot \partial x_n^{\alpha_n}}$ and $\vert \boldsymbol{\alpha}\vert \coloneqq\sum_{i=1}^n{\alpha_i}$ 
	for each multi-index ${\boldsymbol{\alpha}\coloneqq (\alpha_1,\ldots,\alpha_n)\in (\mathbb{N}_0)^n}$, equipped with the \emph{Sobolev  norm} $\|\cdot\|_{m,p,\omega}\coloneqq \|\cdot\|_{p,\omega}+\vert \cdot\vert_{m,p,\omega}$,~where~$\|\cdot\|_{p,\omega}\coloneqq (\int_\omega{\vert \cdot\vert^p\,\mathrm{d}x})^{\smash{\frac{1}{p}}}$~and
	\begin{align*} 
		\vert\cdot\vert _{m,p,\omega}\coloneqq\Bigg(\sum_{\boldsymbol{\alpha}\in (\mathbb{N}_0)^n\,:\,0<\vert\boldsymbol{\alpha}\vert\leq m }{\|\mathrm{D}^{\boldsymbol{\alpha}}(\cdot)\|_{p,\omega}^p}\Bigg)^{\smash{\frac{1}{p}}}\,.
	\end{align*}  
	Then, for $s \in (0,\infty)\setminus \mathbb{N}$, $p\in [1,\infty]$, and an open set $\omega\subseteq \mathbb{R}^n$, $n\in \mathbb{N}$, the  \emph{Sobolev--Slobodeckij~semi-norm}, for every $v \in  W^{m,p}(\omega)$,~is~defined~by\vspace{-0.5mm}
	\begin{align*}
		\vert v\vert_{s,p,\omega}\coloneqq \Bigg(\sum_{\vert \boldsymbol{\alpha}\vert= m}{\int_{\omega}{\int_{\omega}{\frac{\vert(\mathrm{D}^{\boldsymbol{\alpha}} v)(x)-(\mathrm{D}^{\boldsymbol{\alpha}} v)(y)\vert^p}{\vert x-y\vert^{p\theta+d}}\,\mathrm{d}x}\,\mathrm{d}y}}\Bigg)^{\smash{\frac{1}{p}}}\,,
	\end{align*}
	where $m\in \mathbb{N}_0$ and $\theta\in (0,1)$ are such that $s=m+\theta$.	
	Then, for $s \in (0,\infty)\setminus \mathbb{N}$ and an open set $\omega\subseteq \mathbb{R}^n$, $n\in \mathbb{N}$,  the \emph{Sobolev--Slobodeckij space} is defined by\enlargethispage{11.5mm}
	\begin{align*}
		\smash{W^{s,p}(\omega)\coloneqq \big\{v\in W^{m,p}(\omega)\mid \vert v\vert _{s,p,\omega}<\infty\big\}\,,}
	\end{align*} 
	where $m\hspace*{-0.1em}\in\hspace*{-0.1em} \mathbb{N}_0$ and $\theta\hspace*{-0.1em}\in\hspace*{-0.1em} (0,1)$ are such that $s\hspace*{-0.1em}=\hspace*{-0.1em}m+\theta$.
	
	\subsubsection{Integration-by-parts formula}\vspace{-0.5mm}
	
	\hspace*{5mm}For $m\in \mathbb{N}$, $p,p'\in [1,\infty]$, where $\frac{1}{p}+\frac{1}{p'}=1$, and an open set $\omega\subseteq \mathbb{R}^n$, $n\in \mathbb{N}$,~we~define\vspace{-1mm}
	\begin{align*} 
		\smash{W^{\smash{p'}}(\textup{div};\Omega)\coloneqq \big\{y\in (L^{\smash{p'}}(\Omega))^n\mid \textup{div}\,y\in L^{\smash{p'}}(\Omega)\big\}\,.}
	\end{align*}
	Next, denote by $\textup{tr}(\cdot)\colon W^{1,p}(\Omega)\to W^{\smash{1-\frac{1}{p}},p}(\partial\Omega)$ and ${\textup{tr}((\cdot)\cdot n)\colon W^{\smash{p'}}(\textup{div};\Omega)\to (W^{\smash{1-\frac{1}{p}},p}(\partial\Omega))^*}$ the trace operator  and the normal trace operator, respectively, where $n\colon \partial\Omega\to \mathbb{S}^{d-1}$ denotes the outward unit normal vector~field~to~$\partial\Omega$. 
	Then,  for every $v\in W^{1,p}(\Omega)$ and $y\in W^{\smash{p'}}(\textup{div};\Omega)$, there holds the integration-by-parts formula (\textit{cf}.\ \cite[Sec.\ 4.3, (4.12)]{EG21II}) 
	\begin{align}\label{eq:pi_cont}
		\smash{(\nabla v,y)_{\Omega}+(v,\textup{div}\,y )_{\Omega}=\langle \textup{tr}(y\cdot n),\textup{tr}(v)\rangle_{\partial\Omega}\,,}
	\end{align}
	where, for every $\widehat{y}\in \smash{(W^{\smash{1-\frac{1}{p}},p}(\partial\Omega))^*}$, $\widehat{v}\in \smash{W^{\smash{1-\frac{1}{p}},p}(\gamma)}$, and $\gamma\in \{\Gamma_D,\Gamma_N,\partial\Omega\}$, we  abbreviate
	\begin{align}\label{eq:abbreviation}
		\smash{\langle \widehat{y},\textup{tr}(\widehat{v})\rangle_{\gamma}\coloneqq \langle \widehat{y},\textup{tr}(\widehat{v})\rangle_{W^{1-\smash{\frac{1}{p}},p}(\gamma)}\,.}
	\end{align}
	Eventually, we employ the notation
	\begin{align*} 
		W^{1,p}_D(\Omega)&\coloneqq  \big\{v\in 	W^{1,p}(\Omega) \mid \textup{tr}(v)=0\textup{ a.e.\ on }\Gamma_D\big\}\,,\\
		\smash{W^{p'}_N(\textup{div};\Omega)}&\coloneqq  \big\{y\in W^{\smash{p'}}(\textup{div};\Omega)\mid \langle\textup{tr}(y\cdot n),\textup{tr}(v)\rangle_{\partial\Omega} =0\text{ for all  }v\in W^{1,p}_D(\Omega)\big\}\,.
	\end{align*}
	In what follows, we omit writing both $\textup{tr}(\cdot)$ and $\textup{tr}((\cdot)\cdot n)$ in this context.
	
	\subsubsection{Bounded and finitely additive vector measures} 
	
	\hspace{5mm}Denote by $\mathcal{M}(\mathrm{d}x;\Omega)$, the $\sigma$-algebra of Lebesgue measurable sets. Then,  the \textit{space of bounded and finitely additive vector measures on $\mathcal{M}(\mathrm{d}x;\Omega)$} is defined by (\textit{cf}.\ \cite[Def.\ 4.1]{Toland20})\enlargethispage{15mm}
	\begin{align*}
		(\mathrm{ba}(\Omega))^d\coloneqq \bigg\{\nu \colon \mathcal{M}(\mathrm{d}x;\Omega)\to \mathbb{R}^d\;\bigg|\;\begin{aligned}
			\nu(\emptyset)=0\,,\;\vert\nu(\Omega)\vert<\infty\,,\; \nu(A\cup B)=\nu(A)+\nu(B)\\ 
			\text{ for all }A,B\in \mathcal{M}(\mathrm{d}x;\Omega)\text{ with }A\cap B=\emptyset
		\end{aligned}\,\bigg\}\,.
	\end{align*}
	The \textit{total variation norm}, 
	for every $\nu \in (\mathrm{ba}(\Omega))^d$, is defined by
	\begin{align*}
		\vert \nu \vert (\Omega)\coloneqq \sup_{(A_i)_{i\in \mathbb{N}}\in \pi(\Omega)}{\sum_{i\in \mathbb{N}}{\vert \nu(A_i)\vert}}\,,
	\end{align*}
	where $\pi(\Omega)\subseteq 2^{\mathcal{M}(\mathrm{d}x;\Omega)}$ is the set of all  countable, disjoint partitions of $\Omega$ into Lebesgue~measurable sets.  By Lebesgue's measure decomposition theorem, for every $\nu \in  (\mathrm{ba}(\Omega))^d$, there exist unique $y\in (L^1(\Omega))^d$ and $\nu^s\in (\mathrm{ba}(\Omega))^d$ with $\nu^s\perp y\otimes \mathrm{d}x$\footnote{For every $y\in (L^1(\Omega))^d$, we employ the notation $\langle y\otimes \mathrm{d}x,\widehat{y}\rangle_{(L^\infty(\Omega))^d}\coloneqq \int_{\Omega}{y\cdot \widehat{y}\,\mathrm{d}x}$ for all $\widehat{y}\in (L^\infty(\Omega))^d$.}, such that (\textit{cf}.\ \cite[Thm.\ 2.28]{Toland20})
	\begin{align}\label{eq:lebesgue_decomposition}
		\begin{aligned}
			\nu &=y\otimes \mathrm{d}x+\nu^s\quad\text{ in } (\mathrm{ba}(\Omega))^d\,.
		\end{aligned}
	\end{align}
	The dual space of $(L^\infty(\Omega))^d$ (\textit{i.e.}, $((L^\infty(\Omega))^d)^*$) is isometrically isomorphic~to~$(\mathrm{ba}(\Omega))^d$;~in~the~sense that for every $y^*\in ((L^\infty(\Omega))^d)^*$, there exists a unique 
	$\nu\in (\mathrm{ba}(\Omega))^d$ such that (\textit{cf}.\ \cite[Thm.~3.1]{Toland20})
	\begin{align*}
		\langle y^*,y\rangle_{(L^\infty(\Omega))^d}&=\int_{\Omega}{y\,\mathrm{d}\nu}\quad\text{ for all }y\in (L^\infty(\Omega))^d\,,\\
		\|y^*\|_{((L^\infty(\Omega))^d)^*}&=\vert \nu\vert(\Omega)\,.
	\end{align*}

	\subsection{Triangulations and standard finite element spaces}
	
	\hspace{5mm}In what follows, we denote by $\{\mathcal{T}_h\}_{h>0}$ a family of uniformly shape regular triangulations~of $\Omega\subseteq \mathbb{R}^d$, $d\in\mathbb{N}$, (\textit{cf}.\  \cite{EG21I}), where
	$h>0$ refers to the \textit{averaged mesh-size}, \textit{i.e.}, ${h 
	\coloneqq (\vert \Omega\vert/\textup{card}(\mathcal{N}_h))^{\frac{1}{d}}}
	$, where $\mathcal{N}_h$  contains the vertices of $\mathcal{T}_h$. We define  the following sets of sides:
	\begin{align*}
		\mathcal{S}_h&\coloneqq \mathcal{S}_h^{i}\cup \mathcal{S}_h^{\partial\Omega}\,,\\
		\mathcal{S}_h^{i}&\coloneqq  \{T\cap T'\mid T,T'\in\mathcal{T}_h\,,\text{dim}_{\mathscr{H}}(T\cap T')=d-1\}\,,\\
		\mathcal{S}_h^{\partial\Omega}&\coloneqq\{T\cap \partial\Omega\mid T\in \mathcal{T}_h\,,\text{dim}_{\mathscr{H}}(T\cap \partial\Omega)=d-1\}\,,\\
		\mathcal{S}_h^\gamma&\coloneqq\{S\in \mathcal{S}_h^{\partial\Omega}\mid \textup{int}(S)\subseteq \gamma\}\text{ for } \gamma\in \{\Gamma_D,\Gamma_N\}\,,
	\end{align*}
	where the Hausdorff dimension is defined by $\text{dim}_{\mathscr{H}}(M)\hspace{-0.15em}\coloneqq\hspace{-0.15em}\inf\{d'\hspace{-0.15em}\geq\hspace{-0.15em} 0\mid \mathscr{H}^{d'}(M)\hspace{-0.15em}=\hspace{-0.15em}0\}$~for~all~${M\hspace{-0.15em}\subseteq \hspace{-0.15em} \mathbb{R}^d}$. 
	It is also assumed that the triangulations $\{\mathcal{T}_h\}_{h>0}$ and  boundary parts $\Gamma_D$~and~$\Gamma_N$~are~chosen such that  $\mathcal{S}_h^{\partial\Omega}\hspace*{-0.1em}=\hspace*{-0.1em}\mathcal{S}_h^{\Gamma_D}\dot{\cup} \mathcal{S}_h^{\Gamma_N}$, \textit{e.g.}, in the case $d\hspace*{-0.1em}=\hspace*{-0.1em}2$,  the boundary parts $\overline{\Gamma}_D$ and $\overline{\Gamma}_N$~touch~only~in~\mbox{vertices}.

	For $k\in \mathbb{N}\cup\{0\}$ and $T\in \mathcal{T}_h$, let $\mathbb{P}^k(T)$ denote the set of polynomials of maximal~degree~$k$~on~$T$. Then, for $k\in \mathbb{N}\cup\{0\}$, the set of  element-wise polynomial functions  is defined by
	\begin{align*}
		\smash{\mathcal{L}^k(\mathcal{T}_h)\coloneqq \big\{v_h\in L^\infty(\Omega)\mid v_h|_T\in\mathbb{P}^k(T)\text{ for all }T\in \mathcal{T}_h\big\}\,.}
	\end{align*} 
	For $\ell \in \mathbb{N}$, the (local) $L^2$-projection $\Pi_h\colon  (L^1(\Omega))^{\ell} \to (\mathcal{L}^0(\mathcal{T}_h))^{\ell} $ onto element-wise constant functions or vector fields, respectively, for every 
	$v\in  (L^1(\Omega))^{\ell} $ is defined by $\Pi_h v|_T\coloneqq\langle v\rangle_T$~for~all~${T\in \mathcal{T}_h}$.  
	
	For $m\in \mathbb{N}\cup\{0\}$ and $S\in \mathcal{S}_h$, let $\mathbb{P}^m(S)$ denote the set of polynomials of maximal degree~$m$~on~$S$. Then, for $m \in \mathbb{N}\cup\{0\}$, 
	the set of side-wise polynomial functions  is defined by
	\begin{align*}
		\smash{\mathcal{L}^m(\mathcal{S}_h)\coloneqq  \big\{v_h\in L^\infty(\cup\mathcal{S}_h)\mid v_h|_T\in\mathbb{P}^m(S)\text{ for all }S\in \mathcal{S}_h\big\}\,.}
	\end{align*} 
	For $\ell  \hspace*{-0.1em}\in  \hspace*{-0.1em}\mathbb{N}$, the (local) $L^2$-projection $\pi_h\colon  \hspace*{-0.1em}(L^1(\cup\mathcal{S}_h))^{\ell} \hspace*{-0.1em}\to \hspace*{-0.1em} (\mathcal{L}^0(\mathcal{S}_h))^{\ell}$ onto side-wise constant functions or vector  fields, respectively,  for every 
	$v\in  (L^1(\cup\mathcal{S}_h))^{\ell} $ is defined by ${\pi_h v|_S\coloneqq \langle v\rangle_S}$~for~all~${S\in \mathcal{S}_h}$. 
	
	For  every $v_h\in \mathcal{L}^k(\mathcal{T}_h)$ and $S\in\mathcal{S}_h$, the \textit{jump across} $S$ is defined by 
	\begin{align*}
		\jump{v_h}_S\coloneqq\begin{cases}
			v_h|_{T_+}-v_h|_{T_-}&\text{ if }S\in \mathcal{S}_h^{i}\,,\text{ where }T_+, T_-\in \mathcal{T}_h\text{ satisfy }\partial T_+\cap\partial T_-=S\,,\\
			v_h|_T&\text{ if }S\in\mathcal{S}_h^{\partial\Omega}\,,\text{ where }T\in \mathcal{T}_h\text{ satisfies }S\subseteq \partial T\,.
		\end{cases}
	\end{align*}
	
	For  every $y_h\in (\mathcal{L}^k(\mathcal{T}_h))^d$ and $S\in\mathcal{S}_h$, the \emph{normal jump across} $S$ is defined by 
	\begin{align*}
		\jump{y_h\cdot n}_S\coloneqq\begin{cases}
			y_h|_{T_+}\!\cdot n_{T_+}+y_h|_{T_-}\!\cdot n_{T_-}&\text{ if }S\in \mathcal{S}_h^{i}\,,\text{ where }T_+, T_-\in \mathcal{T}_h\text{ satisfy }\partial T_+\cap\partial T_-=S\,,\\
			y_h|_T\cdot n&\text{ if }S\in\mathcal{S}_h^{\partial\Omega}\,,\text{ where }T\in \mathcal{T}_h\text{ satisfies }S\subseteq \partial T\,,
		\end{cases}
	\end{align*}
	where, for every $T\in \mathcal{T}_h$, $\smash{n_T\colon\partial T\to \mathbb{S}^{d-1}}$ denotes the outward unit normal vector field to $ T$.\vspace{-0.5mm}
	
	\subsubsection{Crouzeix--Raviart element}\vspace{-0.5mm}
	
	\hspace{5mm}The \textit{Crouzeix--Raviart finite element space} (\textit{cf}.\ \cite{CR73}) is defined as 
	\begin{align*}
		\mathcal{S}^{1,cr}(\mathcal{T}_h)\coloneqq \big\{v_h\in \mathcal{L}^1(\mathcal{T}_h)\mid \pi_h\jump{v_h}=0\text{ a.e.\ on }\cup \mathcal{S}_h^{i}\big\}\,.
	\end{align*}
	The Crouzeix--Raviart finite element space with homogeneous Dirichlet boundary condition~on~$\Gamma_D$ is defined by\vspace{-1mm}\enlargethispage{5mm}
	\begin{align*}
		\smash{\mathcal{S}^{1,cr}_D(\mathcal{T}_h)}\coloneqq \big\{v_h\in\smash{\mathcal{S}^{1,cr}(\mathcal{T}_h)}\mid  \pi_h v_h=0\text{ a.e.\ on }\cup \mathcal{S}_h^{\Gamma_D}\big\}\,.
	\end{align*} 
	Denote by 	$\varphi_S \in \smash{\mathcal{S}^{1,cr}(\mathcal{T}_h)}$, $S \in \mathcal{S}_h$, satisfying 
	$\varphi_S(x_{S'}) = \delta_{S,S'}$~for~all~$S,S' \in \mathcal{S}_h$,~a~\mbox{basis}~of~$\smash{\mathcal{S}^{1,cr}(\mathcal{T}_h)}$.
	Then, the (Fortin) quasi-interpolation operator $\smash{\Pi_h^{cr}\colon W^{1,1}(\Omega)\to \smash{\mathcal{S}^{1,\textit{cr}}(\mathcal{T}_h)}}$  (\textit{cf}.\ \cite[Secs.\ 36.2.1, 36.2.2]{EG21II}), for every $v\in W^{1,1}(\Omega)$ defined by
	\begin{align}
		\Pi_h^{cr}v\coloneqq \sum_{S\in \mathcal{S}_h}{\langle v\rangle_S\,\varphi_S}\,,\label{CR-interpolant}
	\end{align}
	preserves averages of gradients and moments (on sides), \textit{i.e.}, for every $v\in W^{1,1}(\Omega)$, it holds that
	\begin{alignat}{2}
		\nabla_h\Pi_h^{cr}v&=\Pi_h\nabla v&& \quad\text{ a.e.\ in }\Omega\,,\label{eq:grad_preservation}\\
		\pi_h\Pi_h^{cr}v&=\pi_h  v&&\quad \text{ a.e.\ on }\cup\mathcal{S}_h\, ,\label{eq:trace_preservation}
	\end{alignat}
	where $\nabla_h\colon \mathcal{L}^1(\mathcal{T}_h)\to (\mathcal{L}^0(\mathcal{T}_h))^d$ is  defined by $(\nabla_hv_h)|_T\coloneqq \nabla(v_h|_T)$ for all $v_h\in \mathcal{L}^1(\mathcal{T}_h)$~and~${T\in \mathcal{T}_h}$.\vspace{-0.5mm}
	
	\subsubsection{Raviart--Thomas element}\vspace{-0.5mm}
	
	\hspace{5mm}The \textit{(lowest order) Raviart--Thomas finite element space} (\textit{cf}.\ \cite{RT77}) is defined as\enlargethispage{2mm} 
	\begin{align*}
		\mathcal{R}T^0(\mathcal{T}_h)\coloneqq \bigg\{y_h\in(\mathcal{L}^1(\mathcal{T}_h))^d\;\bigg|\; \begin{aligned}
			y_h|_T\cdot n_T=\textup{const}\text{ on }\partial T\text{ for all }T\in \mathcal{T}_h\,,\\[-1mm] 
			\jump{y_h\cdot n}_S=0\text{ on }S\text{ for all }S\in \mathcal{S}_h^{i}
		\end{aligned}\,\bigg\}\,.
	\end{align*} 
	The Raviart--Thomas finite element space with homogeneous normal boundary condition on $\Gamma_N$ is defined by\vspace{-1mm}
	\begin{align*}
		\mathcal{R}T^{0}_N(\mathcal{T}_h)\coloneqq \big\{y_h\in	\mathcal{R}T^0(\mathcal{T}_h)\mid y_h\cdot n=0\text{ a.e.\ on }\Gamma_N\big\}\,.
	\end{align*} 
	Denote by $\psi_S\in \mathcal{R}T^0(\mathcal{T}_h)$, $S\in \mathcal{S}_h$, satisfying   $\psi_S|_{S'}\cdot n_{S'}=\delta_{S,S'}$ on $S'$ for all $S'\in \mathcal{S}_h$,~a~basis~of $\mathcal{R}T^0(\mathcal{T}_h)$,  where~$n_S$ is the unit normal vector on $S$ pointing from $T_-$ to $T_+$~if~${T_+,T_-\in \mathcal{T}_h}$~with~$S=\partial T_+\cap \partial T_-$. Then,  
	the (Fortin) quasi-interpolation operator $\Pi_h^{rt}\colon V_{p,q}(\Omega)\coloneqq \{{y\in (L^p(\Omega))^d\mid  \textup{div}\,y\in}$ $ L^q(\Omega)\}\to \smash{\mathcal{R}T^{0}(\mathcal{T}_h)}$ (\textit{cf}.\ \cite[Sec.\ 16.1]{EG21I}), where $p>2$ and $q>\frac{2d}{d+2}$, for every $y\in V_{p,q}(\Omega)$~defined~by
	\begin{align}
		\Pi_h^{rt} y\coloneqq \sum_{S\in \mathcal{S}_h}{\langle y\cdot n_S\rangle_S\,\psi_S}\,,\label{RT-interpolant}
	\end{align}
	preserves averages of divergences and normal traces (on sides), \textit{i.e.}, for every $y\in V_{p,q}(\Omega)$, it holds that\vspace{-1mm}
	\begin{alignat}{2}
		\textup{div}\,\Pi_h^{rt}y&=\Pi_h\textup{div}\,y&&\quad \text{ a.e.\ in }\Omega\,,\label{eq:div_preservation}\\
		\Pi_h^{rt}y\cdot n&=\pi_hy\cdot n&&\quad \text{ a.e.\ on }\cup\mathcal{S}_h\,.\label{eq:normal_trace_preservation}
	\end{alignat}

	\subsubsection{Discrete integration-by-parts formula}
	
	\hspace{5mm}For every $v_h\hspace{-0.15em}\in \hspace{-0.15em}\mathcal{S}^{1,cr}(\mathcal{T}_h)$ and $y_h\hspace{-0.15em}\in\hspace{-0.15em} \mathcal{R}T^0(\mathcal{T}_h)$,~there~holds the \textit{discrete integration-by-parts~\mbox{formula}}
	\begin{align}
		(\nabla_hv_h,\Pi_h y_h)_{\Omega}+(\Pi_h v_h,\,\textup{div}\,y_h)_{\Omega}=(\pi_h v_h,y_h\cdot n)_{\partial\Omega}\,.\label{eq:pi}
	\end{align} 
	
    \section{Gradient constrained variational problem}\label{sec:continuous}

	\hspace{5mm}In this section, we discuss a gradient constrained variational problem.\medskip
	
	\hspace*{-2.5mm}$\bullet$ \textit{Primal problem.} Given $f\in L^1(\Omega)$, $g\in W^{-1,1}(\Gamma_N)$,  $\zeta\in L^\infty(\Omega)$ such that ${\textup{ess\,inf}_{x\in \Omega}{\zeta(x)}>0}$, and $u_D\in W^{1,\infty}(\Gamma_D)$ such that there exists a trace lift $\widehat{u}_D\in W^{1,\infty}(\Omega)$ satisfying ${\|\frac{\nabla\widehat{u}_D}{\zeta}\|_{\infty,\Omega}<1}$,
	the \textit{primal problem} is given via the minimization of $I\colon \hspace{-0.175em}W^{1,\infty}(\Omega)\hspace{-0.175em}\to\hspace{-0.175em} \mathbb{R}\cup\{+\infty\}$,~for~every~${v\hspace{-0.175em}\in \hspace{-0.175em}W^{1,\infty}(\Omega)}$ defined by 
	\begin{align} \label{eq:primal}
		\begin{aligned} 
			I(v)&\coloneqq  \tfrac{1}{2}\| \nabla v\|_{2,\Omega}^2+I_K(v)-(f,v)_{\Omega}-\langle g,v\rangle_{\Gamma_N}\\
			&\coloneqq \tfrac{1}{2}\| \nabla v\|_{2,\Omega}^2+\smash{I_{K_\zeta(0)}^{\Omega}}(\nabla v)-(f,v)_{\Omega}-\langle g,v\rangle_{\Gamma_N}+I_{\{u_D\}}^{\Gamma_D}(v)\,, 
		\end{aligned}
	\end{align} 
	where\enlargethispage{2.5mm}
	\begin{align*}
		K\coloneqq \big\{v\in W^{1,\infty}(\Omega)\mid \vert \nabla v\vert\leq \zeta \text{ a.e.\ in }\Omega\,,\; v=u_D\text{ a.e.\ on }\Gamma_D\big\}\,,
	\end{align*}
	$I_K\hspace*{-0.1em}\coloneqq \hspace*{-0.1em}I_{K_\zeta(0)}^{\Omega}\circ\nabla\hspace*{-0.1em}\colon\hspace*{-0.1em} W^{1,\infty}(\Omega)\hspace*{-0.175em}\to\hspace*{-0.175em} \mathbb{R}\cup\{+\infty\}$,  $\smash{I_{K_\zeta(0)}^{\Omega}}\hspace*{-0.1em}\colon\hspace*{-0.1em}(L^{\infty}(\Omega))^d\hspace*{-0.175em}\to\hspace*{-0.175em}\mathbb{R}\cup\{+\infty\}$,~for~every~$\widehat{y}\hspace*{-0.175em}\in\hspace*{-0.175em}(L^{\infty}(\Omega))^d$, is defined by 
	\begin{align*}
		\smash{I_{K_\zeta(0)}^{\Omega}}(\widehat{y})
		&\coloneqq 
		\begin{cases}
			0&\text{ if }\vert \widehat{y}\vert \leq \zeta\text{ a.e.\ in }\Omega\,,\\
			+\infty &\text{ else}\,,
		\end{cases}
	\end{align*}
	and $\smash{I_{\{u_D\}}^{\Gamma_D}}\colon W^{1,\infty}(\Gamma_D)\to \mathbb{R}\cup\{+\infty\}$, for every $\widehat{v}\in W^{1,\infty}(\Gamma_D)$, is defined by 
	\begin{align*}
		\smash{I_{\{u_D\}}^{\Gamma_D}}(\widehat{v})
		&\coloneqq 
		\begin{cases}
			0&\text{ if }\widehat{v}=u_D\text{ a.e.\ on }\Gamma_D\,,\\
			+\infty &\text{ else}\,.
		\end{cases}
	\end{align*}
	Since the functional \eqref{eq:primal} is proper, strictly convex, weakly coercive, and lower semi-continuous, the direct method in the calculus of variations yields the existence of a unique~minimizer~${u\in K}$, 
	called \textit{primal solution}. We reserve the notation $u\in K$ for the primal solution.\medskip\enlargethispage{7mm}

	\hspace*{-2.5mm}$\bullet$ \textit{Dual problem.} A \textit{(Fenchel) dual problem} (in the sense of \cite[Rem.\ 4.2, p.\ 60/61]{ET99}) to the minimization of \eqref{eq:primal} is given via~the~maximization of $D\colon (\textup{ba}(\Omega))^d\to \mathbb{R}\cup\{-\infty\}$, for every $\nu =y\otimes\mathrm{d}x+\nu^s\in (\textup{ba}(\Omega))^d$, where $y\in (L^1(\Omega))^d$ and $\nu^s\in (\textup{ba}(\Omega))^d$ with $\nu^s\perp y\otimes \mathrm{d}x$ (\textit{cf}.\ \eqref{eq:lebesgue_decomposition}), defined by\footnote{Here, 
 $\frac{1}{\zeta}\nu^s\in (\mathrm{ba}(\Omega))^d$ is defined by $(\frac{1}{\zeta}\nu^s)(A)\coloneqq \int_A{\frac{1}{\zeta}\,\mathrm{d}\nu^s}$ for all $A\in \mathcal{M}(\mathrm{d}x;\Omega)$.}
	\begin{align} \label{eq:dual}
		\begin{aligned} 
			D(\nu)&\coloneqq -\int_{\Omega}{\phi^*(\cdot,y)\,\mathrm{d}x}-\vert\tfrac{1}{\zeta} \nu^s\vert(\Omega)-I_{K^*}(\nu)\\
			&\quad+\langle \nu,\nabla \widehat{u}_D\rangle_{\smash{(L^{\infty}(\Omega))^d}}-(f,\widehat{u}_D)_{\Omega}-\langle g,\widehat{u}_D\rangle_{\Gamma_N}\,, 
		\end{aligned}
	\end{align} 
	where 
	\begin{align*}
		K^*\coloneqq \big\{\nu \in (\mathrm{ba}(\Omega))^d\mid \langle \nu , \nabla v\rangle_{(L^\infty(\Omega))^d}=(f,v)_\Omega+\langle g,v\rangle_{\Gamma_N}\text{ for all }v\in W_D^{1,\infty}(\Omega)\big\}\,,
	\end{align*}
	$I_{K^*}\colon  (\textup{ba}(\Omega))^d\to \mathbb{R}\cup \{+\infty\}$, for every $\widehat{\nu} \in  (\textup{ba}(\Omega))^d$, is defined by
	\begin{align*}
		I_{K^*}(\widehat{\nu}) \coloneqq
		\begin{cases}
			0&\text{ if }\widehat{\nu}\in K^*\,,\\
			+\infty& \text{ else}\,,
		\end{cases}
	\end{align*}
	and
	$\phi^*\colon \Omega\times \mathbb{R}^d\to\mathbb{R}$, for a.e.\ $x\in \Omega$ and every $s\in \mathbb{R}^d$, defined by
	\begin{align*}
		\phi^*(x,s)=\begin{cases}
			\tfrac{1}{2}\vert  s\vert ^2&\text{ if }\vert s\vert \leq \zeta(x)\,,\\
			\zeta(x)\vert  s\vert -\tfrac{\zeta(x)^2}{2}&\text{ if }\vert s\vert >\zeta(x)\,,
		\end{cases} 
	\end{align*}
	denotes the Fenchel conjugate to $\phi\colon \Omega\times\mathbb{R}^d\to \mathbb{R} \cup\{+\infty\}$ (with respect to the second argument), for a.e.\ $x\in \Omega$ and every $t\in\mathbb{R}^d$,  defined by\vspace*{-0.5mm} 
	\begin{align*}
		\phi(x,t)
  \coloneqq \tfrac{1}{2}\vert t\vert^2+\begin{cases}
			0&\text{ if }\vert t\vert \leq \zeta(x)\,,\\
			+\infty&\text{ if }\vert t\vert >\zeta(x)\,.
		\end{cases}
	\end{align*} 
	
	Appealing to \cite[Thm.\ 2]{Rockafellar68}, for every $y\in W^1(\textup{div};\Omega)$, we have that
	\begin{align*}
		D(y\otimes \mathrm{d}x)= -\int_{\Omega}{\phi^*(\cdot,y)\,\mathrm{d}x}-\smash{I_{\{-f\}}^{\Omega}}(\textup{div}\,y)-\smash{I_{\{g\}}^{\Gamma_N}}(y\cdot n) +\langle y,u_D\rangle_{\partial\Omega}-\langle g,u_D\rangle_{\Gamma_N}\,,
	\end{align*}
	where 
	$	\smash{I_{\{-f\}}^{\Omega}}\colon  (L^1(\Omega))^d\to \mathbb{R}\cup \{+\infty\}$, for every $\widehat{v}\in  L^1(\Omega)$, is defined by
	\begin{align*}
		\smash{I_{\{-f\}}^{\Omega}}(\widehat{v}) \coloneqq
		\begin{cases}
			0&\text{ if }\widehat{v}=-f\text{ a.e.\ in }\Omega\,,\\
			+\infty& \text{ else}\,,
		\end{cases}
	\end{align*}
	and $	\smash{I_{\{g\}}^{\Gamma_N}}\colon  W^{-1,1}(\partial\Omega)\to \mathbb{R}\cup \{+\infty\}$, for every $\widehat{v}\in W^{-1,1}(\partial\Omega)$, is defined by
	\begin{align*}
		\smash{I_{\{g\}}^{\Gamma_N}}(\widehat{v})&\coloneqq \begin{cases}
			0&\text{ if }\langle \widehat{v},v\rangle_{\partial\Omega}=\langle g, v\rangle_{\Gamma_N}\text{ for all }v\in W^{1,\infty}_D(\Omega)\,,\\
			+\infty &\text{ else}\,.
		\end{cases}
	\end{align*}
	
	The identification of the (Fenchel) dual problem (in the sense of \cite[Rem.\ 4.2, p.\ 60/61]{ET99}) to the minimization of \eqref{eq:primal} with the maximization of  \eqref{eq:dual} can be found in the proof of the following result that also establishes the validity of a strong~duality~relation and convex~optimality~relations.\enlargethispage{2.5mm}
	
	\begin{theorem}[strong \hspace*{-0.1mm}duality \hspace*{-0.1mm}and \hspace*{-0.1mm}convex \hspace*{-0.1mm}optimality \hspace*{-0.1mm}relations]\label{thm:duality} \hspace*{-0.1mm}The \hspace*{-0.1mm}following \hspace*{-0.1mm}statements~\hspace*{-0.1mm}\mbox{apply}:
		\begin{itemize}[noitemsep,topsep=2pt,leftmargin=!,labelwidth=\widthof{(ii)}]
			\item[(i)]  A (Fenchel) dual problem to  the minimization of \eqref{eq:primal} is given via the maximization~of~\eqref{eq:dual}.  
			\item[(ii)]  There exists a maximizer $\mu =z\otimes\mathrm{d}x+\mu^s\in (\textup{ba}(\Omega))^d$, where $z\in (L^1(\Omega))^d$ and $\mu^s\in (\textup{ba}(\Omega))^d$ with $\mu^s\perp z\otimes \mathrm{d}x$ (\textit{cf}.\ \eqref{eq:lebesgue_decomposition}), of \eqref{eq:dual} satisfying the \emph{admissibility condition}\vspace*{-0.5mm}
			\begin{align}\label{eq:admissibility}
				\langle \mu, \nabla v\rangle_{(L^\infty(\Omega))^d}=(f,v)_\Omega+\langle g,v\rangle_{\Gamma_N}\quad\text{ for all }v\in W^{1,\infty}_D(\Omega)\,. 
			\end{align}
			In addition, there
			holds a \emph{strong duality relation}, \textit{i.e.}, we have that
			\begin{align}
				I(u) = D(\mu)\,.\label{eq:strong_duality}
			\end{align}
			\item[(iii)] There hold  \emph{convex optimality relations}, \textit{i.e.}, we have that\vspace*{-1mm}
			\begin{align}
				\nabla u&=D_t\phi^*(\cdot,z)=\left.\begin{cases}
					z&\text{ if }\vert z\vert\leq \zeta\,,\\
					\zeta \frac{z}{\vert z\vert}&\text{ if }\vert z\vert> \zeta
				\end{cases}\right\}\quad\text{ a.e.\ in }\Omega\,,\label{eq:optimality.1}\\
				\vert \tfrac{1}{\zeta}\mu^s\vert(\Omega)&=\langle \mu^s,\nabla u\rangle_{(L^\infty(\Omega))^d}\,.\label{eq:optimality.2}
			\end{align}
		\end{itemize}
	\end{theorem}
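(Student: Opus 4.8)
The plan is to realize the minimization of~\eqref{eq:primal} as an instance of the Fenchel duality scheme of~\cite[Rem.\ 4.2, (4.21), p.\ 60/61]{ET99} and to deduce (i)--(iii) from it. First I would eliminate the Dirichlet constraint by the trace lift~$\widehat{u}_D$: writing $v=\widehat{u}_D+w$ with $w\in W^{1,\infty}_D(\Omega)$, the primal problem becomes $\inf_{w\in W^{1,\infty}_D(\Omega)}\{F(w)+G(\Lambda w)\}$ up to the additive constant $-(f,\widehat{u}_D)_\Omega-\langle g,\widehat{u}_D\rangle_{\Gamma_N}$, where $\Lambda\coloneqq\nabla\colon W^{1,\infty}_D(\Omega)\to(L^\infty(\Omega))^d$ is bounded and linear, $F(w)\coloneqq-(f,w)_\Omega-\langle g,w\rangle_{\Gamma_N}$ is bounded and linear on $W^{1,\infty}_D(\Omega)$, and $G(\widehat{y})\coloneqq\int_\Omega\phi(\cdot,\nabla\widehat{u}_D+\widehat{y})\,\mathrm{d}x=\tfrac12\|\nabla\widehat{u}_D+\widehat{y}\|_{2,\Omega}^2+I_{K_\zeta(0)}^{\Omega}(\nabla\widehat{u}_D+\widehat{y})$ is proper and convex on $(L^\infty(\Omega))^d$. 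The decisive ingredient is the qualification condition of the Fenchel theorem: I claim that $G$ is finite and continuous at $\widehat{y}=0$ in the norm topology of $(L^\infty(\Omega))^d$. Indeed, $G(0)=\tfrac12\|\nabla\widehat{u}_D\|_{2,\Omega}^2<\infty$, and since $\textup{ess\,inf}_{x\in\Omega}\zeta(x)>0$ and $\|\tfrac{\nabla\widehat{u}_D}{\zeta}\|_{\infty,\Omega}<1$, there is $\delta>0$ such that $\vert\nabla\widehat{u}_D\vert\le\zeta-\delta$ a.e.\ in~$\Omega$; hence the $L^\infty$-ball of radius $\delta$ about $0$ is contained in $\mathrm{dom}\,G$, on which $G$ coincides with the map $\widehat{y}\mapsto\tfrac12\|\nabla\widehat{u}_D+\widehat{y}\|_{2,\Omega}^2$, which is continuous on $(L^\infty(\Omega))^d$ because $\Omega$ is bounded. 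This is precisely why $W^{1,\infty}(\Omega)$, rather than $W^{1,2}_0(\Omega)$, is the correct norm topology on~$K$. Consequently, \cite[Rem.\ 4.2, (4.21), p.\ 60/61]{ET99} applies: strong duality holds and the dual problem $\sup_{\mu^*\in((L^\infty(\Omega))^d)^*}\{-F^*(\Lambda^*\mu^*)-G^*(-\mu^*)\}$ admits a maximizer.

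It then remains to identify this dual problem with the maximization of $D$. Using the isometric isomorphism $((L^\infty(\Omega))^d)^*\cong(\mathrm{ba}(\Omega))^d$ and the pairing recalled in Section~\ref{sec:preliminaries}, the linearity of $F$ gives $F^*(\Lambda^*\mu^*)\in\{0,+\infty\}$, with value $0$ precisely when $\langle\mu^*,\nabla w\rangle_{(L^\infty(\Omega))^d}=-(f,w)_\Omega-\langle g,w\rangle_{\Gamma_N}$ for all $w\in W^{1,\infty}_D(\Omega)$, i.e.\ when $\mu\coloneqq-\mu^*\in K^*$ --- which is the admissibility condition~\eqref{eq:admissibility}. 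For $G^*$, since $G(\widehat{y})=\widetilde{G}(\nabla\widehat{u}_D+\widehat{y})$ with $\widetilde{G}(\widehat{z})\coloneqq\int_\Omega\phi(\cdot,\widehat{z})\,\mathrm{d}x$, one has $G^*(-\mu^*)=\widetilde{G}^*(\mu)-\langle\mu,\nabla\widehat{u}_D\rangle_{(L^\infty(\Omega))^d}$, which accounts for the term $\langle\mu,\nabla\widehat{u}_D\rangle_{(L^\infty(\Omega))^d}$ in~\eqref{eq:dual}. Finally, $\widetilde{G}^*$ is computed on $(\mathrm{ba}(\Omega))^d$ via the conjugation formula for integral functionals on $L^\infty$ in the finitely additive setting of~\cite{Toland20}: writing $\mu=y\otimes\mathrm{d}x+\mu^s$ as in~\eqref{eq:lebesgue_decomposition}, $\widetilde{G}^*(\mu)$ splits into the absolutely continuous part $\int_\Omega\phi^*(\cdot,y)\,\mathrm{d}x$ (no indicator appears, since $\mathrm{dom}\,\phi^*(x,\cdot)=\mathbb{R}^d$) and a singular part coming from the sublinear growth of $\phi^*(x,\cdot)$ at infinity, which the $\mathrm{ba}$-calculus identifies with $\vert\tfrac1\zeta\mu^s\vert(\Omega)$. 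Collecting all contributions and restoring the constant yields $D$ as in~\eqref{eq:dual}; together with the maximizer obtained in the first paragraph, this proves (i) and (ii).

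For (iii) I would invoke the extremality relations accompanying the Fenchel theorem: at the optimal pair $(u-\widehat{u}_D,\mu)$, where $\mu=z\otimes\mathrm{d}x+\mu^s$ denotes the dual maximizer, the Fenchel--Young inequalities $F(w)+F^*(\Lambda^*\mu^*)\ge\langle\Lambda^*\mu^*,w\rangle$ and $G(\Lambda w)+G^*(-\mu^*)\ge\langle-\mu^*,\Lambda w\rangle$ hold with equality. The first merely re-encodes~\eqref{eq:admissibility}. Undoing the lift, the second reads $\tfrac12\|\nabla u\|_{2,\Omega}^2+\widetilde{G}^*(\mu)=\langle\mu,\nabla u\rangle_{(L^\infty(\Omega))^d}$; splitting $\mu$ and $\widetilde{G}^*(\mu)$ by~\eqref{eq:lebesgue_decomposition} and invoking, on the absolutely continuous part, the pointwise Fenchel--Young inequality $\phi(\cdot,\nabla u)+\phi^*(\cdot,z)\ge z\cdot\nabla u$ (note that $\tfrac12\vert\nabla u\vert^2=\phi(\cdot,\nabla u)$ a.e.\ in $\Omega$ because $u\in K$) and, on the singular part, the analogous inequality relating $\langle\mu^s,\nabla u\rangle_{(L^\infty(\Omega))^d}$ and $\vert\tfrac1\zeta\mu^s\vert(\Omega)$, forces both to hold with equality. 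The equality on the absolutely continuous part is the pointwise Fenchel--Young equality which, since $\phi^*(x,\cdot)$ is differentiable, is equivalent to $\nabla u=D_t\phi^*(\cdot,z)$ a.e.\ in $\Omega$, with the branchwise expression read off directly from the definition of $\phi^*$; this is~\eqref{eq:optimality.1}. The equality on the singular part is~\eqref{eq:optimality.2}.

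The step I expect to be the main obstacle is the computation of $\widetilde{G}^*$ in the second paragraph. Since $(\mathrm{ba}(\Omega))^d$ contains purely finitely additive (singular) components that are invisible to $\mathrm{d}x$-almost-everywhere statements, $\widetilde{G}^*$ cannot be obtained by a naive pointwise maximization of the integrand; controlling how such components pair with an essentially bounded competitor constrained by $\vert\widehat{z}\vert\le\zeta$, and isolating the correct singular penalty alongside the absolutely continuous term $\int_\Omega\phi^*(\cdot,y)\,\mathrm{d}x$, is exactly what necessitates the finitely additive measure apparatus (the Lebesgue/Yosida--Hewitt-type decomposition~\eqref{eq:lebesgue_decomposition} and~\cite[Thm.\ 3.1]{Toland20}). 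By contrast, the Dirichlet lift, the verification of the qualification condition, and the bookkeeping of the affine and constant terms are routine.
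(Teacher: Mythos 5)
Your proposal is correct and follows essentially the same Fenchel-duality route as the paper: introduce $F$ and $G$ so that $I = F + G\circ\nabla$, verify the qualification condition (continuity of $G$ near $\nabla\widehat{u}_D$ in the $L^\infty$-topology, guaranteed by $\|\nabla\widehat{u}_D/\zeta\|_{\infty,\Omega}<1$), invoke \cite[Rem.\ 4.2, (4.21), p.\ 61]{ET99} for strong duality and existence of a dual maximizer in $(\mathrm{ba}(\Omega))^d$, compute $F^*$ and $G^*$ to identify the dual with \eqref{eq:dual}, and read off \eqref{eq:optimality.1},\eqref{eq:optimality.2} from the extremality relations. The only cosmetic deviation is that you eliminate the Dirichlet constraint via the trace lift \emph{before} applying Fenchel, working over $W^{1,\infty}_D(\Omega)$ with a linear $F$, whereas the paper keeps the constraint inside $F$ as the indicator $I_{\{u_D\}}^{\Gamma_D}$ and introduces the lift only when computing $F^*(-\nabla^*\nu)$; the two reorganizations are equivalent and produce the same constant term $-(f,\widehat{u}_D)_\Omega-\langle g,\widehat{u}_D\rangle_{\Gamma_N}$. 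The step you flag as the main obstacle---the conjugation of the integral functional $\widetilde{G}$ on $(L^\infty(\Omega))^d$ against $(\mathrm{ba}(\Omega))^d$ and the identification of the singular contribution with $|\tfrac1\zeta\nu^s|(\Omega)=(I_C)^*(\nu^s)$, $C=\{|y|\le\zeta\}$---is precisely what the paper handles by citing \cite[Thm.\ 1]{Rockafellar71}; likewise, for (iii), the paper appeals to \cite[Cor.\ 1B]{Rockafellar71} to split the inclusion $\mu\in\partial G(\nabla u)$ into the a.e.\ Fenchel--Young equality \eqref{eq:optimality_regular} and the singular equality \eqref{eq:optimality.2}, which matches your argument that equality in the sum forces equality separately in the absolutely continuous and singular parts because $|\nabla u|\le\zeta$ a.e. So your proposal is sound; if written up, cite Rockafellar's theorems explicitly rather than arguing the $\mathrm{ba}$-conjugation from scratch.
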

	
	\begin{remark}
		\begin{itemize}[noitemsep,topsep=2pt,leftmargin=!,labelwidth=\widthof{(ii)}]
			\item[(i)] By the standard equality condition in the Fenchel--Young inequality (\textit{cf}.\ \cite[Prop.\ 5.1, p.\ 21]{ET99}), the convex optimality relation \eqref{eq:optimality.1} is equivalent to\vspace*{-0.5mm}
			\begin{align}\label{eq:optimality_regular}
				z\cdot \nabla u=\phi^*(\cdot,z)+\phi(\cdot,\nabla u)\quad \text{ a.e.\ in }\Omega\,.
			\end{align}
			In addition, by \cite[Cor.\ 5.2(5.8), p.\ 22]{ET99},  the convex optimality relation \eqref{eq:optimality.1} is equivalent to 
			\begin{align}\label{eq:optimality_regular2}
				z\in \partial_t \phi(\cdot,\nabla u)\quad \text{ a.e.\ in }\Omega\,.
			\end{align} 
			\item[(ii)] 	If $\mu=z\otimes \mathrm{d}x\in (\textup{ba}(\Omega))^d$, \textit{i.e.}, $\mu^s=0$ in the Lebesgue~decomposition~\eqref{eq:lebesgue_decomposition}, then from the admissibility condition \eqref{eq:admissibility}, it follows that $z\in W^1(\textup{div};\Omega)$ with\vspace*{-1mm}
			\begin{alignat}{2}
				\textup{div}\,z&=-f&&\quad \text{ a.e.\ in }\Omega\,,\label{eq:admissibility_regular}\\
				\langle z\cdot n,v\rangle_{\partial\Omega}&=\langle g,v\rangle_{\Gamma_N}&&\quad \text{ for all }v\in W^{1,\infty}_D(\Omega)\,.\label{eq:admissibility_regular.2}
			\end{alignat}
		\end{itemize} 
	\end{remark}
	
	\begin{proof}[Proof (of Theorem \ref{thm:duality}).]
		\textit{ad (i).} 
        First, we introduce the functionals $G\colon (L^\infty(\Omega))^d\to \mathbb{R}\cup\{+\infty\}$ and $F\colon W^{1,\infty}(\Omega)\to \mathbb{R}\cup\{+\infty\}$, for every $y\in  (L^\infty(\Omega))^d$ and $v\in W^{1,\infty}(\Omega)$, respectively,~defined~by\enlargethispage{5mm}
		\begin{align*}
			G(y)&\coloneqq \int_{\Omega}{\phi(\cdot,\nabla v)\,\mathrm{d}x}\\&=\tfrac{1}{2}\|y\|_{2,\Omega}^2+I_{K_\zeta(0)}^{\Omega}(y)\,,\\
			F(v)&\coloneqq -(f,v)_{\Omega}-\langle g,v\rangle_{\Gamma_N}+I_{\{u_D\}}^{\Gamma_D}(v)\,.
		\end{align*}
		Then, for every $v\in W^{1,\infty}(\Omega)$, we have that 
		\begin{align*}
			I(v)= G(\nabla v)+F(v)\,.
		\end{align*}\newpage
		\hspace*{-5mm}Thus, \hspace{-0.15mm}in \hspace{-0.15mm}accordance \hspace{-0.15mm}with \hspace{-0.15mm}\cite[Rem.\ \hspace{-0.15mm}4.2, \hspace{-0.15mm}p.\ \hspace{-0.15mm}60/61]{ET99}, \hspace{-0.15mm}the \hspace{-0.15mm}(Fenchel) \hspace{-0.1mm}dual \hspace{-0.1mm}problem \hspace{-0.15mm}to \hspace{-0.15mm}the~\hspace{-0.15mm}\mbox{minimization}~\hspace{-0.15mm}of \eqref{eq:primal} \hspace{-0.1mm}is \hspace{-0.1mm}given \hspace{-0.1mm}via \hspace{-0.1mm}the \hspace{-0.1mm}maximization \hspace{-0.1mm}of \hspace{-0.1mm}$D\colon\hspace*{-0.1em} (\mathrm{ba}(\Omega))^d\hspace*{-0.1em}\to\hspace*{-0.1em} \mathbb{R}\hspace*{-0.1em}\cup\hspace*{-0.1em}\{-\infty\}$, 
		\hspace{-0.1mm}for \hspace{-0.1mm}every \hspace{-0.1mm}$\nu\hspace*{-0.1em}\in\hspace*{-0.1em} (\mathrm{ba}(\Omega))^d$~\hspace{-0.1mm}defined~\hspace{-0.1mm}by 
		\begin{align}\label{thm:duality.1}
			D(\nu)\coloneqq -G^*(\nu)- F^*(-\nabla^*\nu)\,,
		\end{align}
		where $\nabla^*\colon  (\mathrm{ba}(\Omega))^d\to (W^{1,\infty}(\Omega))^*$ denotes the adjoint operator to  $\nabla \colon W^{1,\infty}(\Omega)\to (L^\infty(\Omega))^d$.\enlargethispage{5mm}
		
		First, resorting to \cite[Thm.\ 1]{Rockafellar71}, for every $\nu =y\otimes \mathrm{d}x+\nu^s\in (\mathrm{ba}(\Omega))^d$, where $y\in (L^1(\Omega))^d$ and $\nu^s\in (\mathrm{ba}(\Omega))^d$ with $\nu^s\perp y\otimes\mathrm{d}x$ (\textit{cf}.\ \eqref{eq:lebesgue_decomposition}), we find that
		\begin{align}\label{thm:duality.2}
			G^*(\nu)= \int_{\Omega}{\phi^*(\cdot,y)\,\mathrm{d}x}+(I_C)^*(\nu^s)\,,
		\end{align}
		where\vspace{-0.5mm}
		\begin{align*}
			C\coloneqq \{y\in (L^{\infty}(\Omega))^d\mid G(y)<+\infty\}=\{y\in (L^{\infty}(\Omega))^d\mid \vert y\vert\leq \zeta\text{ a.e.\ in }\Omega\}\,,
		\end{align*}
		$I_C\colon (L^{\infty}(\Omega))^d\to \mathbb{R}\cup\{+\infty\}$, for every $\hat{y}\in (L^{\infty}(\Omega))^d$, is defined by\vspace{-0.5mm}
		\begin{align*}
			I_C(\hat{y})\coloneqq \begin{cases}
				0&\text{ if }\hat{y}\in C\,,\\
				+\infty&\text{ else}\,,
			\end{cases}
		\end{align*}
		and, thus,\vspace{-0.5mm}
		\begin{align}\label{thm:duality.3}
			\begin{aligned} 
				(I_C)^*(\nu^s)=\sup_{y\in C}{\langle \nu^s,y\rangle_{(L^{\infty}(\Omega))^d}}=\vert\tfrac{1}{\zeta} \nu^s\vert(\Omega)\,.
			\end{aligned}
		\end{align}
		
		Second, for every $\nu\in (\mathrm{ba}(\Omega))^d$,  we have that 
		\begin{align}\label{thm:duality.4}
			\begin{aligned}
				F^*(-\nabla^*\nu)&
				=\sup_{v\in W^{1,\infty}(\Omega)}{\big\{-\langle\nabla^* \nu,v\rangle_{\smash{W^{1,\infty}(\Omega)}}+(f,v)_{\Omega}+\langle g,v\rangle_{\Gamma_N}-I_{\{u_D\}}^{\Gamma_D}(v)\big\}}
				\\&=\sup_{v\in W^{1,\infty}_D(\Omega)}{\big\{-\langle \nu,\nabla v\rangle_{\smash{(L^{\infty}(\Omega))^d}}+(f,v)_{\Omega}+\langle g,v\rangle_{\Gamma_N}\big\}}
				\\&\qquad-\langle \nu,\nabla \widehat{u}_D\rangle_{\smash{(L^{\infty}(\Omega))^d}}+(f,\widehat{u}_D)_{\Omega}+\langle g,\widehat{u}_D\rangle_{\Gamma_N}
				\\&
				=I_{K^*}(\nu)-\langle \nu,\nabla \widehat{u}_D\rangle_{\smash{(L^{\infty}(\Omega))^d}}+(f,\widehat{u}_D)_{\Omega}+\langle g,\widehat{u}_D\rangle_{\Gamma_N}\,.
			\end{aligned}
		\end{align}
		In summary,  using \eqref{thm:duality.2}, \eqref{thm:duality.3}, and \eqref{thm:duality.4} in \eqref{thm:duality.1}, for every $\nu\in (\mathrm{ba}(\Omega))^d$, we arrive at the claimed representation \eqref{eq:dual}. 
		
		\textit{ad (ii).} Since both $G\colon (L^\infty(\Omega))^d\to \mathbb{R}\cup\{+\infty\}$ and $F\colon W^{1,\infty}(\Omega)\to \mathbb{R}\cup\{+\infty\}$~are~proper,~convex, and lower semi-continuous and  since 
		$G\colon  (L^\infty(\Omega))^d\to \mathbb{R}$  is continuous at
		$\nabla\widehat{u}_D\in \textup{dom}(G\circ \nabla)$ (recall that  $\|\frac{\nabla\widehat{u}_D}{\zeta}\|_{\infty,\Omega}<1$) with $ \widehat{u}_D\in \textup{dom}(F)$, \textit{i.e.},  we have that\vspace{-0.5mm}
		\begin{align*}
			G(y)\to G(\nabla\widehat{u}_D)\quad(y\to \nabla\widehat{u}_D\quad\text{ in }(L^\infty(\Omega))^d)\,,
		\end{align*}
		by \hspace*{-0.1mm}the \hspace*{-0.1mm}celebrated \hspace*{-0.1mm}Fenchel \hspace*{-0.1mm}duality \hspace*{-0.1mm}theorem \hspace*{-0.1mm}(\emph{cf}.\ \hspace*{-0.1mm}\cite[\hspace*{-0.1mm}Rem.\ \hspace*{-0.1mm}4.2, \hspace*{-0.1mm}(4.21), \hspace*{-0.1mm}p.\ \hspace*{-0.1mm}61]{ET99}) and Lebesgue~decompo-sition theorem (\textit{cf}.\ \eqref{eq:lebesgue_decomposition}),  there exists a maximizer $\mu=z\otimes\mathrm{d}x+\mu^s\in (\mathrm{ba}(\Omega))^d$, where $z\in (L^1(\Omega))^d$ and $\mu^s\hspace*{-0.1em}\in\hspace*{-0.1em}  (\mathrm{ba}(\Omega))^d$ with $\mu^s\hspace*{-0.1em}\perp \hspace*{-0.1em} z\otimes\mathrm{d}x$, of  \eqref{thm:duality.1} and~a~strong~duality~\mbox{relation}~\mbox{applies},~\textit{i.e.},~it~holds~that
        \begin{align*}
            I(u)=D(\mu)\,.
        \end{align*}
		
		\textit{ad (iii).} \hspace*{-0.1mm}By \hspace*{-0.1mm}the \hspace*{-0.1mm}standard \hspace*{-0.1mm}(Fenchel) \hspace*{-0.1mm}convex \hspace*{-0.1mm}duality \hspace*{-0.1mm}theory \hspace*{-0.1mm}(\emph{cf}.\ \hspace*{-0.1mm}\cite[Rem.\ \hspace*{-0.1mm}4.2,~\hspace*{-0.1mm}(4.24),~\hspace*{-0.1mm}(4.25),\hspace*{-0.1mm}~p.~\hspace*{-0.1mm}61]{ET99}), there hold the convex optimality relations\vspace{-0.5mm}
		\begin{align}
			-\nabla^*\mu &\in \partial F(u)\,,\label{thm:duality.5}\\
			\mu&\in \partial G(\nabla u)\,.\label{thm:duality.6}
		\end{align} 
		While the inclusion \eqref{thm:duality.5} is equivalent to the admissibility condition \eqref{eq:admissibility}, the inclusion~\eqref{thm:duality.4}, by \cite[Cor.\ 1B]{Rockafellar71},  is equivalent to\vspace{-0.5mm}
		\begin{align}
			z&\in \partial_t \phi(\cdot,\nabla u)\quad \text{ a.e.\ in }\Omega\,,\label{thm:duality.7}\\
			\vert\tfrac{1}{\zeta} \mu^s\vert (\Omega)&=\langle   \mu^s,\nabla u\rangle_{(L^\infty(\Omega))^d}\,.\label{thm:duality.8}
		\end{align}
		Eventually, by \cite[Cor.\ 5.2(5.8), p.\ 22]{ET99}, \eqref{thm:duality.7} is equivalent to \eqref{eq:optimality.1}.
	\end{proof}\newpage

	\section{\emph{A posteriori} error analysis}\label{sec:aposteriori} 
	\qquad In this section, resorting to convex duality arguments from \cite{BarGudKal24}, we derive an \emph{a posteriori} error identity for conforming approximations of  the primal problem \eqref{eq:primal} and the~dual~problem~\eqref{eq:dual} at~the~same~time. 
	To this end, we introduce the 
	\emph{primal-dual gap estimator} ${\eta^2_{\textup{gap}}\colon K\times K^*\to \mathbb{R}}$, for every $v\in K$ and $\nu\in K^*$ defined by 
	\begin{align}\label{eq:primal-dual.1}
		\begin{aligned}
			\eta^2(v,\nu)&\coloneqq I(v)-D(\nu)\,. 
		\end{aligned}
	\end{align}
	
	The primal-dual gap estimator \eqref{eq:primal-dual.1} can be decomposed into two contributions that precisely measure the violation of the convex optimality relations \eqref{eq:optimality.1},\eqref{eq:optimality.2}.
	
	\begin{lemma}[decomposition of the primal-dual gap estimator]\label{lem:primal_dual_gap_estimator}
		For every $v\hspace*{-0.1em}\in\hspace*{-0.1em} K$ and $\nu\hspace*{-0.1em}=\hspace*{-0.1em}{y\otimes\mathrm{d}x}$ $+\nu^s\in K^*$, where $y\in (L^1(\Omega))^d$ and $\nu^s\in (\mathrm{ba}(\Omega))^d$ with $\nu^s\perp y\otimes\mathrm{d}x$ (\textit{cf}.\ \eqref{eq:lebesgue_decomposition}), we have that
		\begin{align*} 
			\eta_{\textup{gap}}^2(v,\nu)&=	\eta_{\textup{gap},A}^2(v,y)+\eta_{\textup{gap},B}^2(v,\nu^s)\,,\\
			\eta_{\textup{gap},A}^2(v,y)&\coloneqq	\int_{\Omega}{\{\phi(\cdot,\nabla v)-\nabla v\cdot y+\phi^*(\cdot,y)\}\,\mathrm{d}x}\,,\\
			\eta_{\textup{gap},B}^2(v,\nu^s)&\coloneqq \vert\tfrac{1}{\zeta} \nu^s\vert(\Omega)-\langle \nu^s,\nabla v\rangle_{(L^\infty(\Omega))^d}\,.
		\end{align*}
	\end{lemma}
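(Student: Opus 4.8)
The plan is to expand $I(v)$ and $D(\nu)$ separately — using that $v\in K$ and $\nu\in K^*$ make all indicator terms vanish — and then subtract. First, since $v\in K$ satisfies $\vert\nabla v\vert\le\zeta$ a.e.\ in $\Omega$ and $v=u_D$ a.e.\ on $\Gamma_D$, we have $\phi(\cdot,\nabla v)=\tfrac12\vert\nabla v\vert^2$ a.e.\ in $\Omega$, $I_{K_\zeta(0)}^{\Omega}(\nabla v)=0$, and $I_{\{u_D\}}^{\Gamma_D}(v)=0$, so \eqref{eq:primal} gives $I(v)=\int_\Omega\phi(\cdot,\nabla v)\,\mathrm dx-(f,v)_\Omega-\langle g,v\rangle_{\Gamma_N}$. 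Likewise, since $\nu\in K^*$ we have $I_{K^*}(\nu)=0$, so \eqref{eq:dual} gives
\[
D(\nu)=-\int_\Omega\phi^*(\cdot,y)\,\mathrm dx-\vert\tfrac1\zeta\nu^s\vert(\Omega)+\langle\nu,\nabla\widehat u_D\rangle_{(L^\infty(\Omega))^d}-(f,\widehat u_D)_\Omega-\langle g,\widehat u_D\rangle_{\Gamma_N}\,.
\]

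Next — this is the only step carrying genuine content — I would remove the trace lift $\widehat u_D$ from $D(\nu)$. Since $\widehat u_D\in W^{1,\infty}(\Omega)$ is a trace lift of $u_D$ and $v\in K$ satisfies $v=u_D$ a.e.\ on $\Gamma_D$, the difference $w\coloneqq v-\widehat u_D$ belongs to $W^{1,\infty}_D(\Omega)$. Plugging $w$ into the defining relation of $K^*$ (equivalently, the admissibility condition \eqref{eq:admissibility}) yields $\langle\nu,\nabla v\rangle_{(L^\infty(\Omega))^d}-\langle\nu,\nabla\widehat u_D\rangle_{(L^\infty(\Omega))^d}=(f,v-\widehat u_D)_\Omega+\langle g,v-\widehat u_D\rangle_{\Gamma_N}$, \textit{i.e.},
\[
\langle\nu,\nabla\widehat u_D\rangle_{(L^\infty(\Omega))^d}-(f,\widehat u_D)_\Omega-\langle g,\widehat u_D\rangle_{\Gamma_N}=\langle\nu,\nabla v\rangle_{(L^\infty(\Omega))^d}-(f,v)_\Omega-\langle g,v\rangle_{\Gamma_N}\,.
\]
Substituting this identity into the expression for $D(\nu)$ replaces $\widehat u_D$ by $v$ throughout, so that the data $f$ and $g$ enter only through $(f,v)_\Omega$ and $\langle g,v\rangle_{\Gamma_N}$.

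Finally, I would subtract. The terms $(f,v)_\Omega$ and $\langle g,v\rangle_{\Gamma_N}$ cancel, leaving
\[
\eta_{\textup{gap}}^2(v,\nu)=I(v)-D(\nu)=\int_\Omega\{\phi(\cdot,\nabla v)+\phi^*(\cdot,y)\}\,\mathrm dx+\vert\tfrac1\zeta\nu^s\vert(\Omega)-\langle\nu,\nabla v\rangle_{(L^\infty(\Omega))^d}\,.
\]
To conclude, I would split the last pairing along the Lebesgue decomposition $\nu=y\otimes\mathrm dx+\nu^s$ (\textit{cf}.\ \eqref{eq:lebesgue_decomposition}): since $\nabla v\in(L^\infty(\Omega))^d$, additivity of the pairing together with the convention $\langle y\otimes\mathrm dx,\nabla v\rangle_{(L^\infty(\Omega))^d}=\int_\Omega y\cdot\nabla v\,\mathrm dx$ gives $\langle\nu,\nabla v\rangle_{(L^\infty(\Omega))^d}=\int_\Omega y\cdot\nabla v\,\mathrm dx+\langle\nu^s,\nabla v\rangle_{(L^\infty(\Omega))^d}$. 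Collecting the Lebesgue-measurable terms into $\eta_{\textup{gap},A}^2(v,y)$ and the singular terms into $\eta_{\textup{gap},B}^2(v,\nu^s)$ is then exactly the claimed decomposition.

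The argument is essentially bookkeeping; the points that need care are the membership $v-\widehat u_D\in W^{1,\infty}_D(\Omega)$ (which uses that $\widehat u_D$ is a genuine trace lift of $u_D$ and that $v\in K$) and — so that the rearrangements above are carried out between finite quantities — the integrability $\phi^*(\cdot,y)\in L^1(\Omega)$, which follows from $y\in(L^1(\Omega))^d$, $\zeta\in L^\infty(\Omega)$, and the at-most-linear growth of $s\mapsto\phi^*(\cdot,s)$; if $\vert\tfrac1\zeta\nu^s\vert(\Omega)=+\infty$, both sides simply equal $+\infty$ and there is nothing to prove.
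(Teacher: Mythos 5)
Your proposal is correct and follows essentially the same route as the paper: both arguments reduce to applying the admissibility condition \eqref{eq:admissibility} with the test function $v-\widehat u_D\in W^{1,\infty}_D(\Omega)$ to eliminate the data terms, and then splitting the pairing $\langle\nu,\nabla v\rangle_{(L^\infty(\Omega))^d}$ along the Lebesgue decomposition \eqref{eq:lebesgue_decomposition}. The only difference is organizational (you substitute into $D(\nu)$ before subtracting, whereas the paper manipulates $I(v)-D(\nu)$ directly), and your closing remarks on integrability of $\phi^*(\cdot,y)$ and on the membership $v-\widehat u_D\in W^{1,\infty}_D(\Omega)$ are accurate.
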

	
	\begin{remark}[interpretation of the components of the primal-dual gap estimator]\hphantom{     }
		\begin{itemize}[noitemsep,topsep=2pt,leftmargin=!,labelwidth=\widthof{(ii)}]
			\item[(i)] The estimator $\eta_{\textup{gap},A}^2$ measures the violation of the convex optimality relation \eqref{eq:optimality.1};
			\item[(ii)] The estimator $\eta_{\textup{gap},B}^2$ measures the violation of the convex optimality relation \eqref{eq:optimality.2}.
		\end{itemize} 
	\end{remark}
	
	\begin{proof}[Proof (of Lemma \ref{lem:primal_dual_gap_estimator}).]\let\qed\relax
		Using the admissibility condition \eqref{eq:admissibility}, 
		for every $v\in K$ and $\nu={y\otimes\mathrm{d}x}+\nu^s\in K^*$, where $y\in (L^1(\Omega))^d$ and $\nu^s\in (\mathrm{ba}(\Omega))^d$ with $\nu^s\perp y\otimes\mathrm{d}x$ (\textit{cf}.\ \eqref{eq:lebesgue_decomposition}),  we find that
		\begin{align*}
			I(v)-D(\nu)&= \int_{\Omega}{\phi(\cdot,\nabla v)\,\mathrm{d}x}-(f,v)_{\Omega}-\langle g,v\rangle_{\Gamma_N}\\&\quad+\int_{\Omega}{\phi^*(\cdot,y)\,\mathrm{d}x}+\vert\tfrac{1}{\zeta} \nu^s\vert(\Omega)\\&\quad-\langle \nu,\nabla \widehat{u}_D\rangle_{\smash{(L^{\infty}(\Omega))^d}}+(f,\widehat{u}_D)_{\Omega}+\langle g,\widehat{u}_D\rangle_{\Gamma_N}
			\\&= \int_{\Omega}{\phi(\cdot,\nabla v)\,\mathrm{d}x}-(f,v-\widehat{u}_D)_{\Omega}-\langle g,v-\widehat{u}_D\rangle_{\Gamma_N}\\&\quad+\int_{\Omega}{\phi^*(\cdot,y)\,\mathrm{d}x}+\vert\tfrac{1}{\zeta} \nu^s\vert(\Omega)-\langle \nu,\nabla \widehat{u}_D\rangle_{\smash{(L^{\infty}(\Omega))^d}} 
			\\&= \int_{\Omega}{\phi(\cdot,\nabla v)\,\mathrm{d}x}-\langle \nu , \nabla(v-\widehat{u}_D)\rangle_{(L^\infty(\Omega))^d} \\&\quad+\int_{\Omega}{\phi^*(\cdot,y)\,\mathrm{d}x}+\vert\tfrac{1}{\zeta} \nu^s\vert(\Omega)-\langle \nu,\nabla \widehat{u}_D\rangle_{\smash{(L^{\infty}(\Omega))^d}} 
			\\&= \int_{\Omega}{\{\phi(\cdot,\nabla v)-\nabla v\cdot y+\phi^*(\cdot,y)\}\,\mathrm{d}x}+\vert\tfrac{1}{\zeta} \nu^s\vert(\Omega)-\langle \nu^s,\nabla v\rangle_{(L^\infty(\Omega))^d}\,.\tag*{$\qedsymbol$}
		\end{align*}
	\end{proof}
	In the next step, we  derive meaningful representations of the \emph{optimal strong convexity measures} for the primal energy functional~\eqref{eq:primal} at the primal solution $u\in K$, \textit{i.e.}, for 
	$\rho_I^2\colon K\to [0,+\infty)$, for every $v\in K$ defined by
	\begin{align}\label{def:optimal_primal_error}
		\rho_I^2(v)\coloneqq I(v)-I(u)\,,
	\end{align}
	and \hspace{-0.1mm}for \hspace{-0.1mm}the \hspace{-0.1mm}negative \hspace{-0.1mm}of \hspace{-0.1mm}the \hspace{-0.1mm}dual \hspace{-0.1mm}energy \hspace{-0.1mm}functional \eqref{eq:dual}, \hspace{-0.1mm}\textit{i.e.}, \hspace{-0.1mm}for \hspace{-0.1mm}$\rho_{-D}^2\colon\hspace*{-0.175em} K^*\hspace*{-0.175em}\to\hspace*{-0.175em} [0,+\infty)$,~\hspace{-0.1mm}for~\hspace{-0.1mm}\mbox{every}~\hspace{-0.1mm}${\nu\hspace*{-0.175em}\in\hspace*{-0.175em} K^*}$ defined by
	\begin{align}\label{def:optimal_dual_error}
		\rho_{-D}^2(\nu)\coloneqq- D(\nu)+D(\mu)\,,
	\end{align}
	which will serve as \emph{`natural'} error quantities in the primal-dual gap identity (\textit{cf}.\ Theorem \ref{thm:prager_synge_identity}). 
	
	\begin{lemma}[representations of the optimal strong convexity measures]\label{lem:strong_convexity_measures}
		The following statements apply:
		\begin{itemize}[noitemsep,topsep=2pt,leftmargin=!,labelwidth=\widthof{(ii)}]
			\item[(i)] For every $v\in K$, we have that
			\begin{align*}
				\rho_I^2(v)=\tfrac{1}{2}\|\nabla v-\nabla u\|_{2,\Omega}^2+\big(\tfrac{\vert z\vert}{\zeta}-1,\zeta^2-\nabla u\cdot\nabla v \big)_{\smash{\{\vert \nabla u\vert=\zeta\}}}\,.
			\end{align*}
			\item[(ii)] For every $\nu=y\otimes \mathrm{d}x+\nu^s\in K^*$, where $y\in (L^1(\Omega))^d$ and $\nu^s\in (\mathrm{ba}(\Omega))^d$ with $\nu^s\perp y\otimes \mathrm{d}x$ (\textit{cf}.\ \eqref{eq:lebesgue_decomposition}), we have that
			\begin{align*}
				\rho_{-D}^2(\nu)=\tfrac{1}{2}(D_t^2\phi^*\{y,z\}(y-z),y-z)_{\Omega}+\vert\tfrac{1}{\zeta} \nu^s\vert(\Omega)-\langle \nu^s,\nabla u\rangle_{(L^\infty(\Omega))^d}\,,
			\end{align*}
			where $D_t^2\phi^*\{\cdot,\cdot\}(\cdot) : \mathbb{R}^d\times \mathbb{R}^d\times \Omega\to \mathbb{R}^{d\times d}$, for a.e.\ $x\in \Omega$ and every $t,s\in \mathbb{R}^d$, is defined by
			\begin{align*}
				D_t^2\phi^*\{t,s\}(x)\coloneqq \int_{0}^{1}{D_t^2\phi^*(x,\lambda t+(1-\lambda)s)\,\mathrm{d}\lambda}\,.
			\end{align*}
		\end{itemize}
	\end{lemma}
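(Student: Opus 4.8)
The plan is to exploit that, by the strong duality relation \eqref{eq:strong_duality}, both error quantities are special evaluations of the primal-dual gap estimator: for $v\in K$ one has $\rho_I^2(v)=I(v)-I(u)=I(v)-D(\mu)=\eta_{\textup{gap}}^2(v,\mu)$, and for $\nu\in K^*$ one has $\rho_{-D}^2(\nu)=D(\mu)-D(\nu)=I(u)-D(\nu)=\eta_{\textup{gap}}^2(u,\nu)$, where $\mu=z\otimes\mathrm{d}x+\mu^s\in K^*$ denotes the dual maximizer of Theorem~\ref{thm:duality}. Applying the decomposition of Lemma~\ref{lem:primal_dual_gap_estimator} then reduces the assertion to the four identities
\begin{align*}
\eta_{\textup{gap},A}^2(v,z)&=\tfrac12\|\nabla v-\nabla u\|_{2,\Omega}^2+\big(\tfrac{|z|}{\zeta}-1,\zeta^2-\nabla u\cdot\nabla v\big)_{\{|\nabla u|=\zeta\}}\,,\\
\eta_{\textup{gap},A}^2(u,y)&=\tfrac12\big(D_t^2\phi^*\{y,z\}(y-z),y-z\big)_{\Omega}\,,\\
\eta_{\textup{gap},B}^2(v,\mu^s)&=0\,,\qquad \eta_{\textup{gap},B}^2(u,\nu^s)=\vert\tfrac1\zeta\nu^s\vert(\Omega)-\langle\nu^s,\nabla u\rangle_{(L^\infty(\Omega))^d}\,,
\end{align*}
the last of which is merely the definition in Lemma~\ref{lem:primal_dual_gap_estimator}.

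For the first $A$-identity I would argue via a Bregman divergence of $\phi$. The Fenchel--Young equality \eqref{eq:optimality_regular} gives $\phi^*(\cdot,z)=z\cdot\nabla u-\phi(\cdot,\nabla u)$, and since $v,u\in K$ we have $\phi(\cdot,\nabla v)=\tfrac12|\nabla v|^2$ and $\phi(\cdot,\nabla u)=\tfrac12|\nabla u|^2$, so the integrand of $\eta_{\textup{gap},A}^2(v,z)$ becomes
$\phi(\cdot,\nabla v)-\phi(\cdot,\nabla u)-z\cdot\nabla(v-u)=\tfrac12|\nabla v-\nabla u|^2+(\nabla u-z)\cdot\nabla(v-u)$.
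By \eqref{eq:optimality.1} one has $\nabla u=z$ on $\{|z|\le\zeta\}$, while on $\{|z|>\zeta\}$ one has $\nabla u=\zeta z/|z|$, hence $|\nabla u|=\zeta$ and $z=\tfrac{|z|}{\zeta}\nabla u$, so that there $(\nabla u-z)\cdot\nabla(v-u)=(\tfrac{|z|}{\zeta}-1)(\zeta^2-\nabla u\cdot\nabla v)$; since this prefactor vanishes on $\{|\nabla u|=\zeta\}\setminus\{|z|>\zeta\}$, integration over $\Omega$ yields exactly the claimed expression.

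For the second $A$-identity, \eqref{eq:optimality_regular} gives instead $\phi(\cdot,\nabla u)=z\cdot\nabla u-\phi^*(\cdot,z)$, so the integrand of $\eta_{\textup{gap},A}^2(u,y)$ equals $\phi^*(\cdot,y)-\phi^*(\cdot,z)-(y-z)\cdot\nabla u$, which by \eqref{eq:optimality.1} ($\nabla u=D_t\phi^*(\cdot,z)$) is precisely the Bregman remainder of the $C^1$ integrand $\phi^*(x,\cdot)$ at the base point $z(x)$. A Taylor expansion with integral remainder along the segment joining $z(x)$ and $y(x)$ then identifies this remainder with the averaged-Hessian form $\tfrac12(D_t^2\phi^*\{y,z\}(y-z),y-z)$; integrating in $x$ gives the result.

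Finally one must prove $\eta_{\textup{gap},B}^2(v,\mu^s)=\vert\tfrac1\zeta\mu^s\vert(\Omega)-\langle\mu^s,\nabla v\rangle_{(L^\infty(\Omega))^d}=0$ for every $v\in K$, which I expect to be the main obstacle. One inequality is immediate: applying \eqref{thm:duality.3} to $\nabla v$ (an admissible competitor since $v\in K$) together with the optimality relation \eqref{eq:optimality.2} gives $\langle\mu^s,\nabla v\rangle_{(L^\infty(\Omega))^d}\le\vert\tfrac1\zeta\mu^s\vert(\Omega)=\langle\mu^s,\nabla u\rangle_{(L^\infty(\Omega))^d}$. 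The converse estimate has to be extracted from the fine structure of the singular part of the dual maximizer, namely from the inclusion $\mu\in\partial G(\nabla u)$ of \eqref{thm:duality.6} and its Lebesgue-type splitting, which forces $\mu^s$ to be a support functional of $C=\{\widehat y\in(L^\infty(\Omega))^d\mid|\widehat y|\le\zeta\}$ at $\nabla u$. Organizing this finitely-additive-measure argument, together with interpreting $D_t^2\phi^*$ across the non-differentiability set $\{|s|=\zeta\}$ and keeping track of the $|\nu^s|$-almost-everywhere bookkeeping in the Lebesgue decomposition, is the delicate part; the remaining steps are routine manipulations of the definitions and of \eqref{eq:optimality.1}, \eqref{eq:optimality.2}, \eqref{eq:optimality_regular}.
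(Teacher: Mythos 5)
Your decomposition via $\eta_{\textup{gap}}^2(v,\mu)$ and $\eta_{\textup{gap}}^2(u,\nu)$ is structurally the same as the paper's direct computation, and your handling of the two $A$-contributions (Fenchel--Young equality, first-order optimality $\nabla u=D_t\phi^*(\cdot,z)$, Taylor with integral remainder) and of $\eta_{\textup{gap},B}^2(u,\nu^s)$ reproduces the paper's argument for part (ii) and the integral part of (i). The obstacle you single out is genuine and in fact points to a mismatch in the paper itself: the paper's proof of (i) carries out exactly the same computation and arrives at
\begin{align*}
\rho_I^2(v)=\tfrac12\|\nabla v-\nabla u\|_{2,\Omega}^2+\big(\tfrac{|z|}{\zeta}-1,\zeta^2-\nabla u\cdot\nabla v\big)_{\{|\nabla u|=\zeta\}}+\vert\tfrac1\zeta\mu^s\vert(\Omega)-\langle\mu^s,\nabla v\rangle_{(L^\infty(\Omega))^d}\,,
\end{align*}
and then simply declares this to be the claimed representation, although the last two terms do not appear in the statement of the lemma. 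By \eqref{thm:duality.3} applied to $\nabla v\in C$ (any $v\in K$) that residual is $\ge 0$, but it equals zero only if $\langle\mu^s,\nabla v\rangle_{(L^\infty(\Omega))^d}=\vert\tfrac1\zeta\mu^s\vert(\Omega)=\langle\mu^s,\nabla u\rangle_{(L^\infty(\Omega))^d}$, which is \eqref{eq:optimality.2} with $v$ in place of $u$ and is not guaranteed for generic $v\in K$. So do not try to prove $\eta_{\textup{gap},B}^2(v,\mu^s)=0$: it appears to fail in general. The statement of (i) should carry the extra term $\vert\tfrac1\zeta\mu^s\vert(\Omega)-\langle\mu^s,\nabla v\rangle_{(L^\infty(\Omega))^d}$, or one should explicitly assume $\mu^s=0$ (as is done later in Theorem~\ref{thm:apriori_identity}). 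Note that the downstream primal-dual gap identity in Theorem~\ref{thm:prager_synge_identity} is unaffected, since its proof uses only the definitions of $\rho_I^2$, $\rho_{-D}^2$, and strong duality, not the representations of this lemma.
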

	
	\begin{remark}
		\begin{itemize}[noitemsep,topsep=2pt,leftmargin=!,labelwidth=\widthof{(ii)}]
			\item[(i)] Since $\{\vert z\vert\ge \zeta\}=\{\vert \nabla u\vert=\zeta\}$ (\textit{cf}.\ \eqref{eq:optimality.1}) and $\nabla u\cdot\nabla v\leq \zeta^2$ a.e.\ in~$\{\vert \nabla u\vert=\zeta\}$ for all $v\in K$, for every $v\in K$, we have that
			\begin{align*}
				\big(\tfrac{\vert z\vert}{\zeta}-1\big)(\zeta^2-\nabla u\cdot\nabla v)\ge 0\quad\text{ a.e.\ in }\{\vert \nabla u\vert=\zeta\}\,.
			\end{align*}
			
			\item[(ii)] Since for a.e.\ $x\in \Omega$ and every $t\in \mathbb{R}^d$,  we have that
			\begin{align}\label{eq:D2_phi_prime}
				D_t^2\phi^*(x,t) =\begin{cases}
					\mathrm{I}_{d\times d}&\text{ if }\vert t\vert \leq \zeta(x)\,,\\
					\frac{\zeta(x)}{\vert t\vert }(\mathrm{I}_{d\times d}-\frac{t\otimes t}{\vert t\vert^2})&\text{ if }\vert t\vert>\zeta(x)\,,
				\end{cases}
			\end{align} 
			for a.e.\ $x\in \Omega$ and every $t,s\in \mathbb{R}^d$, by the Cauchy--Schwarz inequality, we observe that
			\begin{align*}
				D_t^2\phi^*(x,t):s\otimes s&=\begin{cases}
					\vert s\vert^2&\text{ if }\vert t\vert \leq \zeta(x)\,,\\
					\frac{\zeta(x)}{\vert t\vert }(\vert s\vert^2-\frac{(t\cdot s)^2}{\vert t\vert^2})&\text{ if }\vert t\vert>\zeta(x)\,,
				\end{cases}\\&\ge \begin{cases}
					\vert s\vert^2&\text{ if }\vert t\vert \leq \zeta(x)\,,\\
					0&\text{ if }\vert t\vert>\zeta(x)\,.
				\end{cases}
			\end{align*}
		\end{itemize}
	\end{remark}
	
	\begin{proof}[Proof \hspace*{-0.15mm}(of \hspace*{-0.15mm}Lemma \hspace*{-0.1mm}\ref{lem:strong_convexity_measures})]
		
		\emph{ad (i).} Using the admissibility condition \eqref{eq:admissibility} and  the convex optimality relation \eqref{eq:optimality.2},   for every $v\in K$, we find that
		\begin{align}\label{lem:strong_convexity_measures.0.1}
            \begin{aligned} 
			I(v)-I(u)&=\tfrac{1}{2}\|\nabla v\|_{2,\Omega}^2-\tfrac{1}{2}\|\nabla u\|_{2,\Omega}^2-(f,v-u)_{\Omega}-\langle g,v-u\rangle_{\Gamma_N}
            \\&=\tfrac{1}{2}\|\nabla v\|_{2,\Omega}^2-\tfrac{1}{2}\|\nabla u\|_{2,\Omega}^2-\langle \mu,\nabla v-\nabla u\rangle_{(L^\infty(\Omega))^d}
			\\&=\tfrac{1}{2}\|\nabla v\|_{2,\Omega}^2-\tfrac{1}{2}\|\nabla u\|_{2,\Omega}^2-(z,\nabla v-\nabla u)_{\Omega}-\langle \mu^s,\nabla v-\nabla u\rangle_{(L^\infty(\Omega))^d}
			\\&=\tfrac{1}{2}\|\nabla v\|_{2,\Omega}^2-\tfrac{1}{2}\|\nabla u\|_{2,\Omega}^2-(z,\nabla v-\nabla u)_{\Omega}+\vert\tfrac{1}{\zeta} \mu^s\vert(\Omega)-\langle \mu^s,\nabla v\rangle_{(L^\infty(\Omega))^d}\,.
            \end{aligned}
		\end{align}
		Next, due to the convex optimality relation \eqref{eq:optimality.1}, we have that
		\begin{align}\label{lem:strong_convexity_measures.0.2}
			\begin{aligned}
				z&=\nabla u&&\quad \text{ a.e.\ in }\{\vert \nabla u\vert<\zeta\}\,,\\
				z&=\tfrac{\vert z\vert}{\zeta}\,\nabla u&&\quad \text{ a.e.\ in }\{\vert \nabla u\vert=\zeta\}\,.
			\end{aligned}
		\end{align} 
		Therefore, using \eqref{lem:strong_convexity_measures.0.2} and the binomial formula in \eqref{lem:strong_convexity_measures.0.1}, we find that\enlargethispage{5mm}
		\begin{align*}
			I(v)-I(u)&=\tfrac{1}{2}\|\nabla v\|_{2,\Omega}^2-\tfrac{1}{2}\|\nabla u\|_{2,\Omega}^2-(\nabla u,\nabla v-\nabla u)_{\Omega}\\&\quad+
			\big(\big(\tfrac{\vert z\vert}{\zeta}-1\big)\nabla u,\nabla u-\nabla v\big)_{\smash{\{\vert \nabla u\vert=\zeta\}}}\\&\quad+\vert\tfrac{1}{\zeta} \mu^s\vert(\Omega)-\langle \mu^s,\nabla v\rangle_{(L^\infty(\Omega))^d}
			\\&=\tfrac{1}{2}\|\nabla v-\nabla u\|_{2,\Omega}^2
			+\big(\tfrac{\vert z\vert}{\zeta}-1,\zeta^2-\nabla u\cdot\nabla v\big)_{\smash{\{\vert \nabla u\vert=\zeta\}}}\\&\quad+\vert\tfrac{1}{\zeta} \mu^s\vert(\Omega)-\langle \mu^s,\nabla v\rangle_{(L^\infty(\Omega))^d}\,,
		\end{align*}
    which is the claimed representation of the optimal strong convexity measure $\rho_I^2\colon K\to [0,+\infty)$.
		
		\emph{ad (ii).} Using the admissibility condition \eqref{eq:admissibility} and taylor expansion, for every $\nu=y\otimes \mathrm{d}x+\nu^s\in K^*$, where $y\in (L^1(\Omega))^d$ and $\nu^s\in (\mathrm{ba}(\Omega))^d$ with $\nu^s\perp y\otimes \mathrm{d}x$ (\textit{cf}.\ \eqref{eq:lebesgue_decomposition}), we find that\vspace{-0.5mm} 
		\begin{align}\label{lem:strong_convexity_measures.1}
			\begin{aligned} 
			-D(\nu)+D(\mu)&=\int_{\Omega}{\phi^*(\cdot,y)\,\mathrm{d}x}  -\int_{\Omega}{\phi^*(\cdot,z)\,\mathrm{d}x}\\&\quad+\vert\tfrac{1}{\zeta} \nu^s\vert(\Omega)-\vert\tfrac{1}{\zeta} \mu^s\vert(\Omega)-\langle\nu- \mu,\nabla \widehat{u}_D\rangle_{\smash{(L^{\infty}(\Omega))^d}}
			\\&=(D_t\phi^*(\cdot,z),y-z)_{\Omega}+\tfrac{1}{2}(D_t^2\phi^*\{y,z\}(y-z),y-z)_{\Omega}\\&\quad
			+\vert\tfrac{1}{\zeta} \nu^s\vert(\Omega)-\vert\tfrac{1}{\zeta} \mu^s\vert(\Omega)-\langle\nu- \mu,\nabla \widehat{u}_D\rangle_{\smash{(L^{\infty}(\Omega))^d}}\,.
			\end{aligned}
		\end{align}
		Next, using the convex optimality relations \eqref{eq:optimality.1},\eqref{eq:optimality.2},  for every $\nu=y\otimes \mathrm{d}x+\nu^s\in K^*$, where $y\in (L^1(\Omega))^d$ and $\nu^s\in (\mathrm{ba}(\Omega))^d$ with $\nu^s\perp y\otimes \mathrm{d}x$ (\textit{cf}.\ \eqref{eq:lebesgue_decomposition}), we observe that
			\begin{align}\label{lem:strong_convexity_measures.2}
					\begin{aligned} 
		(D_t\phi^*(\cdot,z),y-z)_{\Omega}-\langle\nu- \mu,\nabla \widehat{u}_D\rangle_{\smash{(L^{\infty}(\Omega))^d}}
		&=	(\nabla u,y-z)_{\Omega}-(y-z,\nabla \widehat{u}_D)_{\Omega}\\&\quad-\langle\nu^s- \mu^s,\nabla \widehat{u}_D\rangle_{\smash{(L^{\infty}(\Omega))^d}}
		\\&=(y-z,\nabla u-\nabla \widehat{u}_D)_{\Omega}\\&\quad+\langle\nu^s- \mu^s,\nabla u-\nabla \widehat{u}_D\rangle_{\smash{(L^{\infty}(\Omega))^d}}
		\\&\quad -\langle\nu^s- \mu^s,\nabla u\rangle_{\smash{(L^{\infty}(\Omega))^d}}
		\\&=\langle\nu- \mu,\nabla u-\nabla \widehat{u}_D\rangle_{\smash{(L^{\infty}(\Omega))^d}}
		\\&\quad -\langle\nu^s,\nabla u\rangle_{\smash{(L^{\infty}(\Omega))^d}}+\vert \tfrac{1}{\zeta} \mu^s\vert(\Omega)
		\\&=-\langle\nu^s,\nabla u\rangle_{\smash{(L^{\infty}(\Omega))^d}}+\vert \tfrac{1}{\zeta} \mu^s\vert(\Omega)\,,
			\end{aligned}
		\end{align}
		where we used in the last step that, due to the admissibility condition \eqref{eq:admissibility}, for every $\widehat{v}\in W^{1,\infty}_D(\Omega)$, we have that\vspace{-0.5mm}
		\begin{align*}
			\langle\nu,\nabla \widehat{v}\rangle_{\smash{(L^{\infty}(\Omega))^d}}=\langle\mu,\nabla \widehat{v}\rangle_{\smash{(L^{\infty}(\Omega))^d}}\,.
		\end{align*}
		Eventually, using \eqref{lem:strong_convexity_measures.2} in \eqref{lem:strong_convexity_measures.1}, we conclude that the claimed representation of the optimal strong convexity measure $\rho_{-D}^2\colon K^*\to [0,+\infty)$ applies.
	\end{proof}
	
	Towards this end, we have everything at our disposal to establish an \emph{a posteriori} error identity that identifies the \emph{primal-dual total error} $\rho_{\textup{tot}}^2\colon K\times K^*\to [0,+\infty)$, for every $v\in K$ and $\nu\in K^*$ defined by\vspace{-0.5mm}
	\begin{align}\label{def:primal_dual_total_error}
		\rho_{\textup{tot}}^2(v,\nu)\coloneqq \rho_I^2(v)+\rho_{-D}^2(\nu)\,,
	\end{align}
	with the primal-dual gap estimator $\eta_{\textup{gap}}^2\colon K\times K^*\to [0,+\infty)$ (\textit{cf}.\ \eqref{eq:primal-dual.1}). This leads to an \textit{a posteriori} error identity, called \textit{primal-dual gap identity}, which can  be interpreted as a generalization of the celebrated Prager--Synge identity (\textit{cf}.\ \cite{PraSyn47}).\enlargethispage{10mm}

	\begin{theorem}[primal-dual gap identity]\label{thm:prager_synge_identity}
		For every $v\in K$ and $\nu\in K^*$, we have that
		\begin{align*}
			\rho_{\textup{tot}}^2(v,\nu)
			=\eta_{\textup{gap}}^2(v,\nu)\,.
		\end{align*}
	\end{theorem}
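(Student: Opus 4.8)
The plan is to prove the identity by simply unfolding the definitions of the three error quantities involved and invoking the strong duality relation already established in Theorem~\ref{thm:duality}. Concretely, for $v\in K$ and $\nu\in K^*$, I would start from the definition of the primal-dual total error in \eqref{def:primal_dual_total_error}, namely $\rho_{\textup{tot}}^2(v,\nu)=\rho_I^2(v)+\rho_{-D}^2(\nu)$, and then substitute the defining expressions \eqref{def:optimal_primal_error} and \eqref{def:optimal_dual_error} for the two optimal strong convexity measures, obtaining
\begin{align*}
\rho_{\textup{tot}}^2(v,\nu)=\big(I(v)-I(u)\big)+\big(-D(\nu)+D(\mu)\big)=\big(I(v)-D(\nu)\big)-\big(I(u)-D(\mu)\big)\,.
\end{align*}

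Next I would recall that Theorem~\ref{thm:duality}(ii) provides the strong duality relation \eqref{eq:strong_duality}, i.e.\ $I(u)=D(\mu)$, where $u\in K$ is the primal solution and $\mu\in(\textup{ba}(\Omega))^d$ is the dual maximizer constructed there. Plugging this into the last displayed line makes the bracketed correction term vanish, leaving $\rho_{\textup{tot}}^2(v,\nu)=I(v)-D(\nu)$. Finally, comparing with the definition \eqref{eq:primal-dual.1} of the primal-dual gap estimator, $\eta_{\textup{gap}}^2(v,\nu)=I(v)-D(\nu)$, yields the asserted equality $\rho_{\textup{tot}}^2(v,\nu)=\eta_{\textup{gap}}^2(v,\nu)$.

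There is essentially no genuine obstacle here: the statement is a bookkeeping consequence of how the error quantities were defined together with strong duality. The only point deserving a word of care is well-posedness — one should note that each of $I(v)$, $I(u)$, $D(\nu)$, $D(\mu)$ is finite (the first two because $v,u\in K$ makes the indicator terms vanish and the data satisfy $f\in L^1(\Omega)$, $g\in W^{-1,1}(\Gamma_N)$; the latter two by Theorem~\ref{thm:duality} and $\nu\in K^*$), so that the additions and subtractions above are legitimate and the identity is an equality of real numbers rather than a formal manipulation of possibly infinite terms. One may also remark, as a sanity check, that combining this identity with the decompositions in Lemma~\ref{lem:primal_dual_gap_estimator} and Lemma~\ref{lem:strong_convexity_measures} recovers the termwise matching of the ``$A$'' and ``$B$'' contributions, which is the Prager--Synge-type refinement the theorem is meant to encapsulate.
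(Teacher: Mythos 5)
Your proposal is correct and follows exactly the paper's argument: unfold the definitions of $\rho_{\textup{tot}}^2$, $\rho_I^2$, $\rho_{-D}^2$, and $\eta_{\textup{gap}}^2$, then cancel $I(u)-D(\mu)$ via the strong duality relation \eqref{eq:strong_duality}. The additional remark on finiteness of the four energy values is a sensible (if routine) supplement that the paper leaves implicit.
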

	
	\begin{proof}\let\qed\relax
		Combining the definitions \eqref{eq:primal-dual.1}, \eqref{def:optimal_primal_error}, \eqref{def:optimal_dual_error}, \eqref{def:primal_dual_total_error}, and the strong duality relation \eqref{eq:strong_duality}, 
		for every $v\in K$ and $\nu\in K^*$, we find that
		\begin{align*} \rho_{\textup{tot}}^2(v,\nu)&=\rho_I^2(v)+\rho_{-D}^2(\nu)
			\\&=I(v)-I(u)+D(\mu )-D(\nu)
			\\&=I(v)-D(\nu)
			\\&=\eta_{\textup{gap}}^2(v,\nu)\,.\tag*{$\qedsymbol$}
		\end{align*}
	\end{proof} 

    The primal-dual gap identity (\textit{cf}.\ Theorem \ref{thm:prager_synge_identity}) requires to approximate the primal and the dual problem at the same time in a conforming way --a potentially computationally expensive task. On the basis of orthogonality relations between the Crouzeix--Raviart  and the Raviart--Thomas element, in the subsequent section, 
	we transfer all convex duality relations from Section \ref{sec:continuous} to a discrete level to arrive at a discrete reconstruction formula that allows us to approximate the primal and the dual problem at the same time using only the Crouzeix--Raviart element.
	
	\newpage
	 
    \section{Discrete gradient constrained variational problem}\label{sec:discrete}
	
	\hspace*{5mm}In this section, we discuss a discrete gradient constrained variational problem.\medskip\enlargethispage{7.5mm}
	
	\hspace*{-2.5mm}$\bullet$ \textit{Discrete primal problem.} Let   $f_h\hspace*{-0.1em}\in\hspace*{-0.1em} \mathcal{L}^0(\mathcal{T}_h)$, $g_h\hspace*{-0.1em}\in\hspace*{-0.1em} \mathcal{L}^0(\mathcal{S}_h^{\Gamma_N})$, ${\zeta_h\hspace*{-0.1em}\in \hspace*{-0.1em}\mathcal{L}^0(\mathcal{T}_h)}$ with ${\textup{ess\,inf}_{x\in \Omega}{\zeta_h(x)}\hspace*{-0.1em}>\hspace*{-0.1em}0}$, and $u_D^h\in \mathcal{L}^0(\mathcal{S}_h^{\Gamma_D})$ such that there exists a trace lift $\widehat{u}_D\in \mathcal{S}^{1,cr}(\mathcal{T}_h)$ satisfying $\|\frac{\nabla\widehat{u}_D^h}{\zeta_h}\|_{\infty,\Omega}<1$ be approximations of $f\in L^1(\Omega)$, $g\in W^{-1,1}(\Gamma_N)$, $\zeta\in L^\infty(\Omega)$, and  $u_D\in W^{1,\infty}(\Gamma_D)$,~\mbox{respectively}. Then, the \emph{discrete primal problem} is given via the minimization of $I_h^{cr}\colon \mathcal{S}^{1,cr}(\mathcal{T}_h)\to \mathbb{R}\cup\{+\infty\}$, for every $v_h\in \mathcal{S}^{1,cr}(\mathcal{T}_h)$ defined by
	\begin{align}
		\begin{aligned}
			I_h^{cr}(v_h)&\coloneqq \tfrac{1}{2}\|\nabla_h v_h\|_{2,\Omega}^2-I_{K_h^{cr}}(v_h)  -(f_h,\Pi_hv_h)_{\Omega}-(g_h,\pi_hv_h)_{\Gamma_N}\\
			&\coloneqq\tfrac{1}{2}\|\nabla_h v_h\|_{2,\Omega}^2-I_{K_{\zeta_h}(0)}^\Omega(\nabla_h v_h)  -(f_h,\Pi_hv_h)_{\Omega}-(g_h,\pi_hv_h)_{\Gamma_N}+I_{\{u_D^h\}}^{\Gamma_D}(v_h)\,,
		\end{aligned}\label{eq:discrete_primal}
	\end{align}
	where
	\begin{align*}
		K_h^{cr}\coloneqq \big\{v_h\in \mathcal{S}^{1,cr}(\mathcal{T}_h)\mid \vert \nabla_h v_h\vert\leq \zeta_h \text{ a.e.\ in }\Omega\,,\; \pi_hv_h=u_D^h\text{ a.e.\ on }\Gamma_D\big\}\,,
	\end{align*}
	$I_{K_h^{cr}}\hspace*{-0.15em}\coloneqq \hspace*{-0.15em} I_{K_{\zeta_h}(0)}^{\Omega}\circ\nabla_h\colon \hspace*{-0.15em}\mathcal{S}^{1,cr}(\mathcal{T}_h)\hspace*{-0.15em}\to \hspace*{-0.15em}\mathbb{R}\cup\{+\infty\}$,  $\smash{I_{K_{\zeta_h}(0)}^{\Omega}}\hspace*{-0.1em}\colon\hspace*{-0.1em}(\mathcal{L}^0(\mathcal{T}_h))^d\hspace*{-0.175em}\to\hspace*{-0.175em}\mathbb{R}\cup\{+\infty\}$,~for~every~${\widehat{y}_h\hspace*{-0.175em}\in\hspace*{-0.175em}(\mathcal{L}^0(\mathcal{T}_h))^d}$, is defined by 
	\begin{align*}
		\smash{I_{K_{\zeta_h}(0)}^{\Omega}}(\widehat{y}_h)
		&\coloneqq 
		\begin{cases}
			0&\text{ if }\vert \widehat{y}_h\vert \leq \zeta_h\text{ a.e.\ in }\Omega\,,\\
			+\infty &\text{ else}\,,
		\end{cases}
	\end{align*}
	and $\smash{I_{\{u_D^h\}}^{\Gamma_D}}\colon \mathcal{L}^0(\mathcal{S}_h^{\Gamma_D})\to \mathbb{R}\cup\{+\infty\}$, for every $\widehat{v}_h\in \mathcal{L}^0(\mathcal{S}_h^{\Gamma_D})$, is defined by 
	\begin{align*}
		\smash{I_{\{u_D^h\}}^{\Gamma_D}}(\widehat{v}_h)
		&\coloneqq 
		\begin{cases}
			0&\text{ if }\widehat{v}_h=u_D^h\text{ a.e.\ on }\Gamma_D\,,\\
			+\infty &\text{ else}\,.
		\end{cases}
	\end{align*}
	Since the functional \eqref{eq:discrete_primal} is proper, strictly convex, weakly coercive, and lower semi-continuous, the direct method in the calculus of variations yields the existence~of~a~unique~minimizer~$u_h^{cr}\in K_h^{cr}$, called  \textit{discrete primal solution}. We reserve the notation $u_h^{cr}\in K_h^{cr}$ for the discrete~primal solution.\medskip
	
	\hspace*{-2.5mm}$\bullet$ \textit{Discrete dual problem.} A \textit{(Fenchel) dual problem} (in the sense of \cite[Rem.\ 4.2, p.\ 60/61]{ET99}) to the minimization of \eqref{eq:discrete_primal} is given via the maximization of $D_h^{rt}\colon \mathcal{R}T^0(\mathcal{T}_h)\to \mathbb{R}\cup\{-\infty\}$, for every $y_h\in \mathcal{R}T^0(\mathcal{T}_h)$~defined~by 
	\begin{align}
		\begin{aligned}
			D_h^{rt}(y_h)&\coloneqq -\int_{\Omega}{\phi^*_h(\cdot,\Pi_hy_h)\,\mathrm{d}x}+(y_h\cdot n,u_D^h)_{\Gamma_D}-I_{\smash{K_h^{rt,*}}}(y_h)\\
			&\coloneqq -\int_{\Omega}{\phi^*_h(\cdot,\Pi_hy_h)\,\mathrm{d}x}+(y_h\cdot n,u_D^h)_{\Gamma_D}-\smash{I_{\{-f_h\}}^{\Omega}}(\textup{div}\,y_h)-\smash{I_{\{g_h\}}^{\Gamma_N}}(y_h\cdot n)\,,
		\end{aligned}\label{eq:discrete_dual}
	\end{align}
	where
	\begin{align*}
		K_h^{rt,*}\coloneqq \big\{y_h\in \mathcal{R}T^0(\mathcal{T}_h)\mid \textup{div}\,y_h=-f_h\text{ a.e.\ in }\Omega\,,\;y_h\cdot n=g_h\text{ a.e.\ on }\Gamma_N\big\}\,,
	\end{align*}
	$I_{\smash{K_h^{rt,*}}}\coloneqq \smash{I_{\{-f_h\}}^{\Omega}}\circ\textup{div}+\smash{I_{\{g_h\}}^{\Gamma_N}}((\cdot)\cdot n)\colon \mathcal{R}T^0(\mathcal{T}_h)\to \mathbb{R}\cup\{+\infty\}$, $\smash{I_{\{g_h\}}^{\Gamma_N}}\colon \mathcal{L}^0(\mathcal{S}_h^{\Gamma_N})\to \mathbb{R}\cup\{+\infty\}$, for every $\widehat{v}_h\in \mathcal{L}^0(\mathcal{S}_h^{\Gamma_N})$, is defined by
	\begin{align*}
		\smash{I_{\{g_h\}}^{\Gamma_N}}(\widehat{v}_h)\coloneqq\begin{cases}
			0&\text{ if }\widehat{v}_h=g_h\text{ a.e.\ on }\Gamma_N\,,\\
			+\infty&\text{ else}\,,
		\end{cases}
	\end{align*}
	and $\phi^*_h\colon \Omega\times \mathbb{R}^d\to \mathbb{R}$, for a.e.\ $x\in \Omega$ and every $s\in \mathbb{R}^d$, defined by
	\begin{align*}
		\phi^*_h(x,s)=\begin{cases}
			\tfrac{1}{2}\vert  s\vert ^2&\text{ if }\vert s\vert \leq \zeta_h(x)\,,\\
			\zeta_h(x)\vert  s\vert -\tfrac{\zeta_h(x)^2}{2}&\text{ if }\vert s\vert >\zeta_h(x)\,,
		\end{cases} 
	\end{align*}
	denotes the Fenchel conjugate to $\phi_h\colon \Omega\times\mathbb{R}^d\to \mathbb{R} \cup\{+\infty\}$ (with respect to the second argument), for a.e.\ $x\in \Omega$ and every $t\in\mathbb{R}^d$,  defined by 
	\begin{align*}
		\phi_h(x,t)
  \coloneqq \tfrac{1}{2}\vert t\vert^2+\begin{cases}
			0&\text{ if }\vert t\vert\leq \zeta_h(x)\,,\\
			+\infty&\text{ if }\vert t\vert >\zeta_h(x)\,.
		\end{cases}
	\end{align*} 
	
	\begin{remark}
        Note that the discrete dual problem is not only defined on
        finitely additive vector measures but on a subset of finitely additive vector measures that are absolutely continuous with respect to the $d$-dimensional Lebesgue measure $\mathrm{d}x$. Hence, if $f=f_h\in \mathcal{L}^0(\mathcal{T}_h)$, $g=g_h\in \mathcal{L}^0(\mathcal{S}_h^{\Gamma_N})$, $u_D=u_D^h\in \mathcal{L}^0(\mathcal{S}_h^{\Gamma_D})$, and $\zeta=\zeta_h\in \mathcal{L}^0(\mathcal{T}_h)$,
        vector fields  that are admissible in the discrete dual problem, \textit{i.e.}, $y_h\in K_h^{rt,*}$, are also are admissible the continuous~dual~problem,~\textit{i.e.},~$y_h\in K^{*}$. 
	\end{remark}
	
	The identification of the (Fenchel) dual problem (in the sense of \cite[Rem.\ 4.2, p.\ 60/61]{ET99}) to the minimization of \eqref{eq:discrete_primal} with the maximization of  \eqref{eq:discrete_dual} can be found in the proof of the following result that also establishes the validity of a strong~duality~relation and convex~optimality~relations.
	
	\begin{theorem}[strong duality and convex duality relations]\label{thm:discrete_duality} The following statements apply:
		\begin{itemize}[noitemsep,topsep=2pt,leftmargin=!,labelwidth=\widthof{(ii)}]
			\item[(i)]  The (Fenchel) dual problem to the minimization of \eqref{eq:discrete_primal} is given via the maximization~of~\eqref{eq:discrete_dual}.  
			\item[(ii)]  There \hspace{-0.1mm}exists \hspace{-0.1mm}a \hspace{-0.1mm}maximizer \hspace{-0.1mm}$z_h^{rt}\hspace{-0.15em} \in\hspace{-0.15em} \mathcal{R}T^0(\mathcal{T}_h)$ \hspace{-0.1mm}of \hspace{-0.1mm}\eqref{eq:discrete_dual} \hspace{-0.1mm}satisfying \hspace{-0.1mm}the \hspace{-0.1mm}\emph{discrete~\hspace{-0.1mm}admissibility~\hspace{-0.1mm}conditions}
			\begin{alignat}{2}
				\textup{div}\,z_h^{rt}&=-f_h&&\quad\text{ a.e.\ in }\Omega\label{eq:discrete_admissibility.1}\,,\\
				z_h^{rt}\cdot n &= g_h&&\quad \text{ a.e.\ on }\Gamma_N\label{eq:discrete_admissibility.2}\,. 
			\end{alignat}
			In addition, there
			holds a \emph{discrete strong duality relation}, \textit{i.e.}, we have that
			\begin{align}
				I_h^{cr}(u_h^{cr}) = D_h^{rt}(z_h^{rt})\,.\label{eq:discrete_strong_duality}
			\end{align}
			\item[(iii)]  If a discrete strong duality relation, then there hold the \emph{discrete convex optimality relation}\enlargethispage{5mm}
			\begin{align}\label{eq:discrete_optimality}
				\nabla_h u_h^{cr}=D_t\phi^*_h(\cdot,\Pi_h z_h^{rt})=\left.\begin{cases}
					\Pi_h z_h^{rt}&\text{ if }\vert \Pi_h z_h^{rt}\vert\leq \zeta_h\,,\\
					\zeta_h\frac{\Pi_h z_h^{rt}}{\vert \Pi_h z_h^{rt}\vert }&\text{ if }\vert \Pi_h z_h^{rt}\vert> \zeta_h
				\end{cases}\right\}\quad \text{ a.e.\ in }\Omega\,.
			\end{align}  
			
		\end{itemize}
	\end{theorem}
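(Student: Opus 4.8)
The plan is to mirror the proof of Theorem~\ref{thm:duality} at the discrete level, with the discrete integration-by-parts formula \eqref{eq:pi} and the Fortin interpolation identities \eqref{eq:grad_preservation}, \eqref{eq:trace_preservation}, \eqref{eq:div_preservation}, \eqref{eq:normal_trace_preservation} taking over the role of the continuous integration-by-parts formula \eqref{eq:pi_cont}. For part~(i), I would introduce $G_h\colon(\mathcal{L}^0(\mathcal{T}_h))^d\to\mathbb{R}\cup\{+\infty\}$, $G_h(y_h)\coloneqq\tfrac12\|y_h\|_{2,\Omega}^2+I_{K_{\zeta_h}(0)}^\Omega(y_h)=\int_\Omega\phi_h(\cdot,y_h)\,\mathrm{d}x$, and $F_h\colon\mathcal{S}^{1,cr}(\mathcal{T}_h)\to\mathbb{R}\cup\{+\infty\}$, $F_h(v_h)\coloneqq-(f_h,\Pi_hv_h)_\Omega-(g_h,\pi_hv_h)_{\Gamma_N}+I^{\Gamma_D}_{\{u_D^h\}}(v_h)$, so that $I_h^{cr}=G_h\circ\nabla_h+F_h$ and, by \cite[Rem.~4.2, p.~60/61]{ET99}, the (Fenchel) dual problem is the maximization of $y_h^*\mapsto-G_h^*(y_h^*)-F_h^*(-\nabla_h^*y_h^*)$ over $y_h^*\in(\mathcal{L}^0(\mathcal{T}_h))^d$, which is identified with its own dual via the $L^2$-inner product (here all spaces are finite-dimensional, so the $\mathrm{ba}$-technology of the continuous setting is not needed). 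Since $G_h$ is a sum of element-wise pointwise contributions, $G_h^*(y_h^*)=\int_\Omega\phi_h^*(\cdot,y_h^*)\,\mathrm{d}x$ by element-wise Fenchel conjugation.

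For $F_h^*(-\nabla_h^*y_h^*)$ I would split $v_h=w_h+\widehat u_D^h$ with $w_h\in\mathcal{S}^{1,cr}_D(\mathcal{T}_h)$ and evaluate the supremum over $w_h$: the linear functional $w_h\mapsto-(\nabla_hw_h,y_h^*)_\Omega+(f_h,\Pi_hw_h)_\Omega+(g_h,\pi_hw_h)_{\Gamma_N}$ on the vector space $\mathcal{S}^{1,cr}_D(\mathcal{T}_h)$ has finite supremum (equal to $0$) if and only if it vanishes identically, which is exactly the condition that $y_h^*=\Pi_hz_h$ for some $z_h\in K_h^{rt,*}$, with the Raviart--Thomas lift $z_h\in\mathcal{R}T^0(\mathcal{T}_h)$ being unique. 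This is the discrete counterpart of the identification of $K^*$ in the continuous case, and it rests on the special compatibility of the Crouzeix--Raviart and Raviart--Thomas elements encoded in \eqref{eq:pi} together with \eqref{eq:grad_preservation}--\eqref{eq:normal_trace_preservation} (a discrete de Rham/Helmholtz-type surjectivity statement, i.e.\ a generalized Marini reconstruction). Parametrizing $y_h^*=\Pi_hz_h$ and inserting $\textup{div}\,z_h=-f_h$, $z_h\cdot n=g_h$ on $\Gamma_N$ and $\pi_h\widehat u_D^h=u_D^h$ on $\Gamma_D$ into \eqref{eq:pi}, the leftover affine terms $-(\nabla_h\widehat u_D^h,\Pi_hz_h)_\Omega+(f_h,\Pi_h\widehat u_D^h)_\Omega+(g_h,\pi_h\widehat u_D^h)_{\Gamma_N}$ collapse to $(z_h\cdot n,u_D^h)_{\Gamma_D}$, producing precisely the representation \eqref{eq:discrete_dual}.

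For part~(ii), I would apply the Fenchel duality theorem (\cite[Rem.~4.2, (4.21), p.~61]{ET99}) in the finite-dimensional spaces $\mathcal{S}^{1,cr}(\mathcal{T}_h)$ and $(\mathcal{L}^0(\mathcal{T}_h))^d$: both $G_h$ and $F_h$ are proper, convex and lower semi-continuous, and $G_h$ is finite and continuous at $\nabla_h\widehat u_D^h$ since $\|\nabla_h\widehat u_D^h/\zeta_h\|_{\infty,\Omega}<1$ places $\nabla_h\widehat u_D^h$ in the interior of $\textup{dom}\,G_h$, while $\widehat u_D^h\in\textup{dom}\,F_h$. Hence strong duality holds and the dual problem attains a maximizer $z_h^{rt}\in\mathcal{R}T^0(\mathcal{T}_h)$; as $D_h^{rt}(z_h^{rt})>-\infty$, necessarily $z_h^{rt}\in K_h^{rt,*}$, which is \eqref{eq:discrete_admissibility.1}--\eqref{eq:discrete_admissibility.2}, and $I_h^{cr}(u_h^{cr})=D_h^{rt}(z_h^{rt})$ gives \eqref{eq:discrete_strong_duality}. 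For part~(iii), assume the discrete strong duality relation (always true by~(ii)). Arguing exactly as in the proof of Lemma~\ref{lem:primal_dual_gap_estimator}, but using \eqref{eq:pi} with $v_h=u_h^{cr}$, $y_h=z_h^{rt}$ and cancelling the load, Neumann and Dirichlet contributions by means of \eqref{eq:discrete_admissibility.1}--\eqref{eq:discrete_admissibility.2}, one finds
\begin{align*}
0=I_h^{cr}(u_h^{cr})-D_h^{rt}(z_h^{rt})=\int_\Omega\big\{\phi_h(\cdot,\nabla_hu_h^{cr})-\nabla_hu_h^{cr}\cdot\Pi_hz_h^{rt}+\phi_h^*(\cdot,\Pi_hz_h^{rt})\big\}\,\mathrm{d}x\,.
\end{align*}
By the Fenchel--Young inequality the integrand is non-negative a.e.\ in $\Omega$, hence it vanishes a.e., so the equality case in the Fenchel--Young inequality (\cite[Prop.~5.1, p.~21]{ET99}) yields $\nabla_hu_h^{cr}=D_t\phi_h^*(\cdot,\Pi_hz_h^{rt})$ a.e.\ in $\Omega$; inserting the explicit formula for $D_t\phi_h^*$ gives \eqref{eq:discrete_optimality}.

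The main obstacle is the characterization of $F_h^*\circ(-\nabla_h^*)$ in part~(i): one must prove that the discrete admissibility condition selected by the supremum over $\mathcal{S}^{1,cr}_D(\mathcal{T}_h)$ is equivalent to the existence of a (necessarily unique) Raviart--Thomas field $z_h\in\mathcal{R}T^0(\mathcal{T}_h)$ with $\Pi_hz_h=y_h^*$, $\textup{div}\,z_h=-f_h$ and $z_h\cdot n=g_h$ on $\Gamma_N$. The ``if'' direction is immediate from \eqref{eq:pi}; the ``only if'' (surjectivity) direction is the genuine content and is where the structural compatibility of the Crouzeix--Raviart and Raviart--Thomas spaces --- the discrete integration-by-parts formula \eqref{eq:pi} and the commuting Fortin interpolations \eqref{eq:grad_preservation}--\eqref{eq:normal_trace_preservation} --- is essential, with no analogue for a generic pair of finite element spaces.
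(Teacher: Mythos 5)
Your proposal is correct and follows the paper's own argument quite closely. Parts (i) and (ii) are structurally identical to the paper's proof: same decomposition $I_h^{cr}=G_h\circ\nabla_h+F_h$, same element-wise conjugation of $G_h$, same reduction of $F_h^*(-\nabla_h^*\,\cdot\,)$ to a supremum over $\mathcal{S}^{1,cr}_D(\mathcal{T}_h)$, and the same invocation of the Fenchel duality theorem with continuity of $G_h$ at $\nabla_h\widehat u_D^h$ (guaranteed by $\|\nabla_h\widehat u_D^h/\zeta_h\|_{\infty,\Omega}<1$). You correctly identify the genuinely non-trivial step in (i): the ``only if'' direction of the characterization of $\mathrm{dom}(F_h^*\circ(-\nabla_h^*))$, i.e.\ that every element-wise constant field satisfying the discrete admissibility conditions admits a Raviart--Thomas lift. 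The paper does not prove this either; it cites \cite[Lemma A.1]{BarGudKal24}. You describe it qualitatively as a discrete de Rham/Helmholtz-type surjectivity statement, which is the right picture, but you should be aware it is an outsourced lemma rather than an immediate consequence of \eqref{eq:pi} and \eqref{eq:grad_preservation}--\eqref{eq:normal_trace_preservation}.

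The one genuine (small) departure is in part (iii). The paper reads off \eqref{eq:discrete_optimality} from the abstract convex optimality inclusion $\Pi_h z_h^{rt}\in\partial G_h(\nabla_h u_h^{cr})$ provided by \cite[Rem.\ 4.2, (4.24), (4.25), p.\ 61]{ET99}. You instead first establish the integral identity
\begin{align*}
0 = I_h^{cr}(u_h^{cr})-D_h^{rt}(z_h^{rt})=\int_\Omega\big\{\phi_h(\cdot,\nabla_h u_h^{cr})-\nabla_h u_h^{cr}\cdot\Pi_h z_h^{rt}+\phi_h^*(\cdot,\Pi_h z_h^{rt})\big\}\,\mathrm{d}x
\end{align*}
(this is exactly the computation that later becomes Lemma~\ref{lem:discrete_primal_dual_gap_estimator}, and its derivation only uses \eqref{eq:pi} and the admissibility conditions \eqref{eq:discrete_admissibility.1}--\eqref{eq:discrete_admissibility.2}, so there is no circularity) and then invoke the pointwise Fenchel--Young equality case. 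This is equivalent to the paper's argument --- the pointwise equality in the Fenchel--Young inequality for an integrand is precisely what the subdifferential inclusion $\Pi_h z_h^{rt}\in\partial G_h(\nabla_h u_h^{cr})$ means --- but it is more self-contained and makes explicit where the constraint structure is used, at the cost of carrying out the integral computation that the abstract citation hides. Both routes are standard; the paper's is more parallel to the continuous case, yours is a bit more hands-on.
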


	\begin{remark}[equivalent discrete convex optimality relations]
		By the standard equality condition in the Fenchel--Young inequality (\textit{cf}.\ \cite[Prop.\ 5.1, p.\ 21]{ET99}), the discrete convex optimality relation \eqref{eq:discrete_optimality} is equivalent to 
		\begin{align}
			\Pi_h z_h^{rt}\cdot \nabla_h u_h^{cr}=\phi^*_h(\cdot,	\Pi_h z_h^{rt})+\phi_h(\cdot,\nabla_h u_h^{cr})\quad \text{ a.e.\ in }\Omega \,.\label{eq:discrete_optimality_reg}
		\end{align}
		In addition, according to \cite[Cor.\ 5.2(5.8), p.\ 22]{ET99},  the discrete convex optimality relation \eqref{eq:discrete_optimality} is equivalent to 
		\begin{align}\label{eq:discrete_optimality_regular2}
			\Pi_hz_h^{rt}\in \partial_t \phi_h(\cdot,\nabla_h u_h^{cr})\quad \text{ a.e.\ in }\Omega\,.
		\end{align} 
	\end{remark}
	\begin{proof}[Proof (of Theorem \ref{thm:discrete_duality}).]
		\textit{ad (i).} First, we introduce the functionals $G_h\colon  (\mathcal{L}^0(\mathcal{T}_h))^d\to \mathbb{R}\cup\{+\infty\}$ and $F_h\colon \mathcal{S}^{1,cr}(\mathcal{T}_h)\to \mathbb{R}\cup\{+\infty\}$, for every $\overline{y}_h\in  (\mathcal{L}^0(\mathcal{T}_h))^d$ and $v_h\in\mathcal{S}^{1,cr}(\mathcal{T}_h)$, respectively, defined by
		\begin{align*}
			G_h(\overline{y}_h)&\coloneqq \int_{\Omega}{\phi_h(\cdot,\overline{y}_h)\,\mathrm{d}x}\\&= \tfrac{1}{2}\|\overline{y}_h\|_{2,\Omega}^2-I_{K_{\zeta_h}(0)}^\Omega(\overline{y}_h) \,,\\
			F_h(v_h)&\coloneqq -(f_h,\Pi_h v_h)_{\Omega}-(g_h,\pi_hv_h)_{\Gamma_N}+I_{\{u_D^h\}}^{\Gamma_D}(v_h)\,.
		\end{align*}
		Then, for every $v_h\in \mathcal{S}^{1,cr}(\mathcal{T}_h)$, we have that
		\begin{align*}
			I_h^{cr}(v_h)= G_h(\nabla_h v_h)+F_h(v_h)\,.
		\end{align*}
		Thus, \hspace*{-0.175mm}in \hspace*{-0.175mm}accordance \hspace*{-0.175mm}with \hspace*{-0.175mm}\cite[\hspace*{-0.2mm}Rem.\ \hspace*{-0.2mm}4.2, \hspace*{-0.2mm}p.\ \hspace*{-0.2mm}60]{ET99}, \hspace*{-0.175mm}the \hspace*{-0.175mm}(Fenchel) \hspace*{-0.175mm}dual \hspace*{-0.175mm}problem \hspace*{-0.175mm}to \hspace*{-0.175mm}the \hspace*{-0.175mm}\mbox{minimization}~\hspace*{-0.175mm}of~\hspace*{-0.175mm}\eqref{eq:discrete_primal} is given via the maximization of $D_h^0\colon (\mathcal{L}	^0(\mathcal{T}_h))^d\to \mathbb{R}\cup\{-\infty\}$, 
		for every $\overline{y}_h\in (\mathcal{L}^0(\mathcal{T}_h))^d$~defined~by 
		\begin{align}\label{helper_functional}
			D_h^0(\overline{y}_h)\coloneqq -G^*_h(\overline{y}_h)- F^*(-\nabla^*_h\overline{y}_h)\,,
		\end{align}
		where $\nabla^*_h\colon  (\mathcal{L}^0(\mathcal{T}_h))^d\to (\mathcal{S}^{1,cr}(\mathcal{T}_h))^*$ denotes the adjoint operator to $\nabla_h \colon \mathcal{S}^{1,cr}(\mathcal{T}_h)\to (\mathcal{L}^0(\mathcal{T}_h))^d$.
		
		First, using element-wise the definition of the Fenchel conjugate of $\phi_h\colon \Omega\times \mathbb{R}^d\to \mathbb{R}\cup\{+\infty\}$ (with respect to the second argument), for every $\overline{y}_h\in (\mathcal{L}^0(\mathcal{T}_h))^d$, we have that
		\begin{align}\label{prop:discrete_duality.1}
			G^*_h(\overline{y}_h)=\int_{\Omega}{\phi^*_h(\cdot,\overline{y}_h)\,\mathrm{d}x}\,.
		\end{align}
		
		Second, for every $\overline{y}_h\in (\mathcal{L}^0(\mathcal{T}_h))^d$, using the lifting lemma \cite[Lemma A.1 in the case $\Gamma_C=\emptyset$]{BarGudKal24} and the discrete integration-by-parts formula \eqref{eq:pi}, we have that
		\begin{align}\label{prop:discrete_duality.2} 
			&F^*_h(-\nabla^*_h\overline{y}_h)
			\\&\quad=\sup_{v_h\in  \mathcal{S}^{1,cr}(\mathcal{T}_h)}{ \big\{ - (\overline{y}_h,\nabla_h v_h )_{\Omega} + (f_h,\Pi_h v_h)_{\Omega}+(g_h,\pi_hv_h)_{\Gamma_N}-I_{\{u_D^h\}}^{\Gamma_D}(v_h)\big\}} \notag
			\\&\quad=\sup_{v_h\in  \mathcal{S}^{1,cr}_D(\mathcal{T}_h)}{ \big\{ - (\overline{y}_h,\nabla_h v_h )_{\Omega} + (f_h,\Pi_h v_h )_{\Omega}+(g_h,\pi_hv_h )_{\Gamma_N}\big\}} \notag
			\\&\quad\quad- (\overline{y}_h,\nabla_h \widehat{u}_D^h )_{\Omega} + (f_h,\Pi_h \widehat{u}_D^h )_{\Omega}+(g_h,\pi_h\widehat{u}_D^h )_{\Gamma_N} \notag
			\\&\quad= 
			\begin{cases}
				\left.\begin{aligned} 
					I_{\{-f_h\}}^{\Omega}(\textup{div}\,y_h)+I_{\{g_h\}}^{\Gamma_N}(y_h\cdot n)\\
					-(y_h,\nabla_h \widehat{u}_D^h )_{\Omega} + (f_h,\Pi_h \widehat{u}_D^h )_{\Omega}+(g_h,\pi_h\widehat{u}_D^h )_{\Gamma_N}
				\end{aligned}\right\}&\left\{\begin{aligned}
					&	\text{ if there exists }y_h\in \mathcal{R}T^0(\mathcal{T}_h)\\&\text{ such that }\overline{y}_h=\Pi_hy_h\text{ a.e.\ in }\Omega\,,
				\end{aligned}\right.\\
				+\infty&\text{ else}\,,
			\end{cases}  \notag
			\\&\quad= 
			\begin{cases}
				\left.\begin{aligned} 
					I_{\{-f_h\}}^{\Omega}(\textup{div}\,y_h)+I_{\{g_h\}}^{\Gamma_N}(y_h\cdot n)\\
					-(y_h\cdot n, u_D^h )_{\Gamma_D} 
				\end{aligned}\right\}&\left\{\begin{aligned}
					&	\text{ if there exists }y_h\in \mathcal{R}T^0(\mathcal{T}_h)\\&\text{ such that }\overline{y}_h=\Pi_hy_h\text{ a.e.\ in }\Omega\,,
				\end{aligned}\right.\\
				+\infty&\text{ else}\,.
			\end{cases}  \notag
		\end{align} 
		Hence, using \eqref{prop:discrete_duality.1} and \eqref{prop:discrete_duality.2} in \eqref{helper_functional}, for every $\overline{y}_h=\Pi_h y_h\in  \Pi_h(\mathcal{R}T^0(\mathcal{T}_h))$,~where~${y_h\in \mathcal{R}T^0(\mathcal{T}_h)}$, 
		we arrive at the~claimed~\mbox{representation}~\eqref{eq:discrete_dual}. Since $D_h^0= -\infty$ in $(\mathcal{L}^0(\mathcal{T}_h))^d\setminus \Pi_h(\mathcal{R}T^0(\mathcal{T}_h))$, we can  restrict to $ \Pi_h(\mathcal{R}T^0(\mathcal{T}_h))$. Finally, we simply define the functional $D_h^{rt}\colon \mathcal{R}T^0(\mathcal{T}_h)\to \mathbb{R}\cup\{-\infty\}$, for every $y_h\in \mathcal{R}T^0(\mathcal{T}_h)$, by 
		\begin{align*}
			D_h^{rt}(y_h)\coloneqq D_h^0(\Pi_hy_h)\,.
		\end{align*} 
		
		\textit{ad (ii).} Since $G_h\colon (\mathcal{L}^0(\mathcal{T}_h))^d\to \mathbb{R}\cup\{+\infty\}$ and $F_h\colon  \mathcal{S}^{1,cr}(\mathcal{T}_h)\to \mathbb{R}$ are proper, convex, and lower semi-continuous and  $G_h\colon (\mathcal{L}^0(\mathcal{T}_h))^d\to \mathbb{R}\cup\{+\infty\}$ is continuous at
		$ \nabla_h\widehat{u}_D^h\in \textup{dom}(G_h\circ \nabla_h)$ (recall that  $\|\frac{\nabla_h\widehat{u}_D^h}{\zeta_h}\|_{\infty,\Omega}<1$) with $\widehat{u}_D^h\in \textup{dom}(F_h)$, \textit{i.e.}, we have that
		\begin{align*}
			G_h(\overline{y}_h)\to G_h(\nabla_h\widehat{u}_D^h)\quad\big(\overline{y}_h\to \nabla_h\widehat{u}_D^h\quad\text{ in }(\mathcal{L}^0(\mathcal{T}_h))^d\big)\,,
		\end{align*}
		by \hspace*{-0.1mm}the \hspace*{-0.1mm}celebrated \hspace*{-0.1mm}Fenchel \hspace*{-0.1mm}duality \hspace*{-0.1mm}theorem \hspace*{-0.1mm}(\emph{cf}.\ \hspace*{-0.1mm}\cite[\hspace*{-0.1mm}Rem.\ \hspace*{-0.1mm}4.2, \hspace*{-0.1mm}(4.21), \hspace*{-0.1mm}p.\ \hspace*{-0.1mm}61]{ET99}),  \hspace*{-0.1mm}there~\hspace*{-0.1mm}exists~\hspace*{-0.1mm}a~\hspace*{-0.1mm}\mbox{maximizer}  $\overline{z}_h^0\in  (\mathcal{L}^0(\mathcal{T}_h))^d$ of  \eqref{helper_functional} and a strong duality relation applies, \textit{i.e.}, it holds that
		\begin{align*}
			I_h^{cr}(u_h^{cr})=D_h^0(\overline{z}_h^0)\,.
		\end{align*}
		Since $I_h^{cr}(u_h^{cr})<+\infty$ and  $\textup{dom}(-D_h^0)\subseteq \Pi_h(\mathcal{R}T^0(\mathcal{T}_h))$, there exists vector field $z_h^{rt}\in \mathcal{R}T^0(\mathcal{T}_h)$~such that $\overline{z}_h^0=\Pi_h z_h^{rt}$ a.e.\ in $\Omega$.
		In particular, we have that $D_h^0(\overline{z}_h^0)=D_h^{rt}(z_h^{rt})$, so~that~$z_h^{rt}\in \mathcal{R}T^0(\mathcal{T}_h)$ is a maximizer of \eqref{eq:discrete_dual} and a discrete strong duality relation applies, \textit{i.e.},  it holds that
        \begin{align*}
            I_h^{cr}(u_h^{cr})=D_h^{rt}(z_h^{rt})\,.
        \end{align*}
		
		\textit{ad (iii).} \hspace*{-0.15mm}By \hspace*{-0.15mm}the \hspace*{-0.15mm}standard \hspace*{-0.15mm}(Fenchel) \hspace*{-0.15mm}convex \hspace*{-0.15mm}duality \hspace*{-0.15mm}theory \hspace*{-0.15mm}(\emph{cf}.\ \hspace*{-0.15mm}\cite[Rem.\ \hspace*{-0.15mm}4.2, \hspace*{-0.15mm}(4.24),~\hspace*{-0.15mm}(4.25),~\hspace*{-0.15mm}p.~\hspace*{-0.15mm}61]{ET99}), there hold the convex optimality relations
		\begin{align}
			-\nabla_h^*\Pi_h z_h^{rt}&\in \partial F_h(u_h^{cr})\,,\label{prop:discrete_duality.3}\\
			\Pi_h z_h^{rt}&\in \partial G_h(\nabla_h u_h^{cr})\,.\label{prop:discrete_duality.4}
		\end{align}
		The inclusion \eqref{prop:discrete_duality.4} is equivalent to the discrete convex optimality relation \eqref{eq:discrete_optimality}. The inclusion \eqref{prop:discrete_duality.3} is equivalent to the discrete admissibility condition \eqref{eq:discrete_admissibility.1}.
	\end{proof}
	
	\newpage Given  a discrete dual solution  (\textit{i.e.}, a maximizer of \eqref{eq:discrete_dual}) and a Lagrange multiplier associated with the discrete admissibility condition \eqref{eq:discrete_admissibility.1}, which we obtain as a by-product in the computation of the discrete dual solution and which is actually 
	a placeholder for the (local) $L^2$-projection of the discrete primal solution, the discrete primal solution is immediately available via an \emph{inverse generalized Marini formula} (\textit{cf}.\ \cite{Mar85,ArnoldBrezzi85,ArbogastChen95,BartelsKaltenbachOverview24}).

	\begin{theorem}[inverse \hspace*{-0.1mm}generalized \hspace*{-0.1mm}Marini \hspace*{-0.1mm}formula]\label{thm:discrete_reconstruction} 
		\hspace*{-0.1mm}Given \hspace*{-0.1mm}a \hspace*{-0.1mm}discrete \hspace*{-0.1mm}dual \hspace*{-0.1mm}solution \hspace*{-0.1mm}$z_h^{rt}\hspace*{-0.175em}\in \hspace*{-0.175em} \mathcal{R}T^0(\mathcal{T}_h)$ and a Lagrange multiplier $\smash{\overline{\lambda}}_h\in \mathcal{L}^0(\mathcal{T}_h)$ such that for every $(y_h,\overline{\eta}_h)^\top\in \mathcal{R}T^0_N(\mathcal{T}_h)\times \mathcal{L}^0(\mathcal{T}_h)$, it holds that
		\begin{align} 
				-(D_t\phi^*_h(\cdot,\Pi_hz_h^{rt}),\Pi_hy_h)_{\Omega}-(\smash{\overline{\lambda}}_h,\textup{div}\,y_h)_{\Omega}&=-(y_h\cdot n,u_D^h )_{\Gamma_D}\,,\label{eq:saddle_point_formulation.1}\\
				(\textup{div}\,z_h^{rt},\overline{\eta}_h)_{\Omega}&=-(f_h,\overline{\eta}_h)_{\Omega}\,,\label{eq:saddle_point_formulation.2} 
		\end{align}
		the discrete primal solution $u_h^{cr}\in \smash{\mathcal{S}^{1,cr}(\mathcal{T}_h)}$ is given via the \emph{inverse generalized Marini formula}
		\begin{align}\label{eq:gen_marini}
u_h^{cr}=\smash{\overline{\lambda}}_h+D_t\phi^*_h(\cdot,\Pi_hz_h^{rt})\cdot(\textup{id}_{\mathbb{R}^d}-\Pi_h\textup{id}_{\mathbb{R}^d})\quad \text{ a.e.\ in }\Omega\,.
		\end{align} 
        In particular, from \eqref{eq:gen_marini}, it follows that
        \begin{align}\label{def:lambda}
            \Pi_hu_h^{cr}=\smash{\overline{\lambda}}_h\quad \text{ a.e.\ in }\Omega\,.
        \end{align}
	\end{theorem}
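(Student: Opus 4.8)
The plan is to show directly that the element-wise affine function $\widetilde{u}_h\in\mathcal{L}^1(\mathcal{T}_h)$ given by the right-hand side of \eqref{eq:gen_marini}, i.e.\
\begin{align*}
	\widetilde{u}_h\coloneqq \overline{\lambda}_h+D_t\phi^*_h(\cdot,\Pi_hz_h^{rt})\cdot(\textup{id}_{\mathbb{R}^d}-\Pi_h\textup{id}_{\mathbb{R}^d})\,,
\end{align*}
lies in $K_h^{cr}$ and coincides with the discrete primal solution $u_h^{cr}$. First I would collect the elementary algebraic properties of $\widetilde u_h$. Since $\Pi_hz_h^{rt}$ and $\zeta_h$ are element-wise constant, the same is true of $D_t\phi^*_h(\cdot,\Pi_hz_h^{rt})$, and the case distinction defining $\phi^*_h$ immediately yields $\vert D_t\phi^*_h(\cdot,\Pi_hz_h^{rt})\vert\le\zeta_h$ a.e.\ in $\Omega$. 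Because $\textup{id}_{\mathbb{R}^d}-\Pi_h\textup{id}_{\mathbb{R}^d}$ restricts on each $T\in\mathcal{T}_h$ to the affine vector field $x\mapsto x-\langle\textup{id}_{\mathbb{R}^d}\rangle_T$, which has vanishing integral mean on $T$, it follows that $\widetilde{u}_h\in\mathcal{L}^1(\mathcal{T}_h)$ with
\begin{align*}
	\nabla_h\widetilde{u}_h=D_t\phi^*_h(\cdot,\Pi_hz_h^{rt})\,,\qquad \Pi_h\widetilde{u}_h=\overline{\lambda}_h\qquad\text{ a.e.\ in }\Omega\,,
\end{align*}
and in particular $\vert\nabla_h\widetilde{u}_h\vert\le\zeta_h$ a.e.\ in $\Omega$.

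The key step is to verify that $\widetilde{u}_h\in\mathcal{S}^{1,cr}(\mathcal{T}_h)$ and that $\pi_h\widetilde{u}_h=u_D^h$ a.e.\ on $\Gamma_D$, so that $\widetilde u_h\in K_h^{cr}$. Since $\widetilde{u}_h$ is not yet known to be a Crouzeix--Raviart function, \eqref{eq:pi} is not available; instead I would integrate by parts element-wise. For an arbitrary $y_h\in\mathcal{R}T^0(\mathcal{T}_h)$, using that $y_h\cdot n_T$ is constant on each side of each $T\in\mathcal{T}_h$ and that $\jump{y_h\cdot n}_S=0$ on $S\in\mathcal{S}_h^i$, summation over $T\in\mathcal{T}_h$ with careful bookkeeping of the orientation of the side normals produces the broken discrete integration-by-parts formula
\begin{align*}
	(\nabla_h\widetilde{u}_h,\Pi_hy_h)_{\Omega}+(\Pi_h\widetilde{u}_h,\textup{div}\,y_h)_{\Omega}=(\pi_h\jump{\widetilde{u}_h},y_h\cdot n)_{\cup\mathcal{S}_h^i}+(\pi_h\widetilde{u}_h,y_h\cdot n)_{\partial\Omega}\,.
\end{align*}
Inserting $\nabla_h\widetilde{u}_h=D_t\phi^*_h(\cdot,\Pi_hz_h^{rt})$ and $\Pi_h\widetilde{u}_h=\overline{\lambda}_h$, subtracting \eqref{eq:saddle_point_formulation.1} (rewritten by multiplying through by $-1$), and using that $y_h\cdot n=0$ a.e.\ on $\Gamma_N$ for $y_h\in\mathcal{R}T^0_N(\mathcal{T}_h)$, I obtain
\begin{align*}
	(\pi_h\jump{\widetilde{u}_h},y_h\cdot n)_{\cup\mathcal{S}_h^i}+(\pi_h\widetilde{u}_h-u_D^h,y_h\cdot n)_{\Gamma_D}=0\qquad\text{ for all }y_h\in\mathcal{R}T^0_N(\mathcal{T}_h)\,.
\end{align*}
Testing successively with $y_h=\psi_S$ for $S\in\mathcal{S}_h^i$ and for $S\in\mathcal{S}_h^{\Gamma_D}$ (all of which belong to $\mathcal{R}T^0_N(\mathcal{T}_h)$ since their normal traces vanish on $\Gamma_N$) isolates the two contributions and yields $\pi_h\jump{\widetilde{u}_h}=0$ a.e.\ on $\cup\mathcal{S}_h^i$ and $\pi_h\widetilde{u}_h=u_D^h$ a.e.\ on $\Gamma_D$. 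Together with $\vert\nabla_h\widetilde{u}_h\vert\le\zeta_h$, this shows $\widetilde{u}_h\in K_h^{cr}$.

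To conclude, I would invoke Theorem \ref{thm:discrete_duality}: since $z_h^{rt}$ is a discrete dual solution, parts (ii) and (iii) give the discrete strong duality relation and hence the discrete convex optimality relation \eqref{eq:discrete_optimality}, i.e.\ $\nabla_hu_h^{cr}=D_t\phi^*_h(\cdot,\Pi_hz_h^{rt})=\nabla_h\widetilde{u}_h$ a.e.\ in $\Omega$. Thus $w_h\coloneqq u_h^{cr}-\widetilde{u}_h$ lies in $\mathcal{S}^{1,cr}(\mathcal{T}_h)$ and satisfies $\nabla_hw_h=0$, hence it is element-wise constant; the Crouzeix--Raviart continuity $\pi_h\jump{w_h}=0$ a.e.\ on $\cup\mathcal{S}_h^i$ then forces $w_h$ to be globally constant on the connected domain $\Omega$, and $\pi_hw_h=u_D^h-u_D^h=0$ a.e.\ on $\Gamma_D$ (where $\vert\Gamma_D\vert>0$) forces $w_h=0$, which is \eqref{eq:gen_marini}. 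Finally, applying $\Pi_h$ to \eqref{eq:gen_marini} and noting that on each $T\in\mathcal{T}_h$ the term $D_t\phi^*_h(\cdot,\Pi_hz_h^{rt})\cdot(\textup{id}_{\mathbb{R}^d}-\Pi_h\textup{id}_{\mathbb{R}^d})$ is a constant vector contracted with an affine vector field of vanishing mean, hence has vanishing integral mean, gives $\Pi_hu_h^{cr}=\overline{\lambda}_h$ a.e.\ in $\Omega$, i.e.\ \eqref{def:lambda}.

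I expect the main obstacle to be the derivation and correct sign/orientation bookkeeping of the broken discrete integration-by-parts formula for the not-yet-conforming function $\widetilde{u}_h$, together with the check that the normal traces of the Raviart--Thomas basis functions associated with the interior sides and with the Dirichlet sides are rich enough to recover $\pi_h\jump{\widetilde{u}_h}$ on $\cup\mathcal{S}_h^i$ and $\pi_h\widetilde{u}_h$ on $\Gamma_D$ separately; the role of \eqref{eq:saddle_point_formulation.2} is only to guarantee consistency, as it is equivalent to $\textup{div}\,z_h^{rt}=-f_h$, which already holds because $z_h^{rt}$ is admissible.
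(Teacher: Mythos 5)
Your proof is correct but follows a genuinely different route from the paper's. The paper never checks that the candidate $\widehat{u}_h^{cr}\coloneqq\overline{\lambda}_h+D_t\phi^*_h(\cdot,\Pi_hz_h^{rt})\cdot(\textup{id}_{\mathbb{R}^d}-\Pi_h\textup{id}_{\mathbb{R}^d})$ belongs to $K_h^{cr}$; instead it applies the discrete integration-by-parts formula \eqref{eq:pi} to the already conforming $u_h^{cr}\in\mathcal{S}^{1,cr}(\mathcal{T}_h)$ inside \eqref{eq:saddle_point_formulation.1} to obtain $(\widehat{u}_h^{cr}-u_h^{cr},\textup{div}\,y_h)_{\Omega}=0$ for all $y_h\in\mathcal{R}T^0_N(\mathcal{T}_h)$, notes that $\widehat{u}_h^{cr}-u_h^{cr}\in\mathcal{L}^0(\mathcal{T}_h)$ since both share the same element-wise gradient by \eqref{eq:discrete_optimality}, and concludes from the surjectivity $\textup{div}\,(\mathcal{R}T^0_N(\mathcal{T}_h))=\mathcal{L}^0(\mathcal{T}_h)$ (which uses $\vert\Gamma_D\vert>0$). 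You instead prove conformity first: you derive a broken integration-by-parts identity for the not-yet-conforming $\widetilde{u}_h\in\mathcal{L}^1(\mathcal{T}_h)$, test with the side basis functions $\psi_S$ to isolate $\pi_h\jump{\widetilde{u}_h}=0$ on $\cup\mathcal{S}_h^i$ and $\pi_h\widetilde{u}_h=u_D^h$ on $\Gamma_D$, so that $\widetilde{u}_h\in K_h^{cr}$, and then kill the element-wise constant difference $w_h=u_h^{cr}-\widetilde{u}_h$ via Crouzeix--Raviart continuity, connectedness, and the Dirichlet condition. Both arguments exploit the same pairing between Crouzeix--Raviart side averages and Raviart--Thomas normal traces, so they are of comparable depth; the paper's version is terser because it sidesteps the broken integration-by-parts bookkeeping and the conformity check, while yours is a bit longer but more informative in that it makes the conformity of the reconstruction explicit before identifying it with $u_h^{cr}$. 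Your final uniqueness step tacitly assumes $\Omega$ is connected, but the paper's surjectivity claim $\textup{div}\,(\mathcal{R}T^0_N(\mathcal{T}_h))=\mathcal{L}^0(\mathcal{T}_h)$ carries the analogous topological assumption, so the two routes are on equal footing there.
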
  
	
	\begin{remark}[well-posedness of the non-linear saddle point formulation \eqref{eq:saddle_point_formulation.1},\eqref{eq:saddle_point_formulation.2}]\hphantom{   }
    \begin{itemize}[noitemsep,topsep=2pt,leftmargin=!,labelwidth=\widthof{(ii)}]
        \item[(i)] While equation \eqref{eq:saddle_point_formulation.2} is equivalent to the discrete admissibility condition \eqref{eq:discrete_admissibility.1}, setting $\smash{\overline{\lambda}}_h\coloneqq \Pi_hu_h^{cr}\in \mathcal{L}^0(\mathcal{T}_h)$ and using the discrete convex optimality relation \eqref{eq:discrete_optimality},
    the equation \eqref{eq:saddle_point_formulation.1} is an immediate consequence of the discrete integration-by-parts formula \eqref{eq:pi} together with $\pi_h u_h^{cr}=u_D^h$ a.e.\ on $\Gamma_D$.\enlargethispage{10mm}

    \item[(ii)] The inverse generalized Marini formula (\textit{cf}.\ \eqref{eq:gen_marini}) together with the regularity of the discrete dual energy functional \eqref{eq:discrete_dual} motivates to, first, compute the discrete dual solution (\textit{e.g.}, using a semi-implicit discretized $L^2$-gradient flow) and, then, to compute the discrete primal solution for free via the inverse generalized Marini formula (\textit{cf}.\ \eqref{eq:gen_marini}).
    \end{itemize}
	\end{remark}
	
	\begin{proof}[Proof (of Theorem \ref{thm:discrete_reconstruction}).]
		To begin with, we introduce the abbreviation
		\begin{align}\label{thm:discrete_reconstruction.1}
			\widehat{u}_h^{cr}\coloneqq\smash{\overline{\lambda}}_h+D_t\phi^*_h(\cdot,\Pi_hz_h^{rt})\cdot(\textup{id}_{\mathbb{R}^d}-\Pi_h\textup{id}_{\mathbb{R}^d})\in \mathcal{L}^1(\mathcal{T}_h)\,.
		\end{align}
		Then, by definition \eqref{thm:discrete_reconstruction.1}, we have that
		\begin{alignat}{2}
    \label{thm:discrete_reconstruction.2.1} 
				\nabla_h \widehat{u}_h^{cr}&=D_t\phi^*_h(\cdot,\Pi_hz_h^{rt})&&\quad\text{ a.e.\ in }\Omega\,,\\
				\Pi_h\widehat{u}_h^{cr}&=\smash{\overline{\lambda}}_h&&\quad\text{ a.e.\ in }\Omega\,.\label{thm:discrete_reconstruction.2.2}  
		\end{alignat}
		From the equations \eqref{eq:saddle_point_formulation.1},\eqref{thm:discrete_reconstruction.2.2}  and $\nabla_h u_h^{cr}=D_t\phi^*_h(\cdot,\Pi_hz_h^{rt})$ (\textit{cf}.\ \eqref{eq:discrete_optimality}), for every ${y_h\in \mathcal{R}T^0_N(\mathcal{T}_h)}$, it follows that
		\begin{align}\label{thm:discrete_reconstruction.3}
			(\nabla_h u_h^{cr},\Pi_hy_h)_{\Omega}+(\Pi_h\widehat{u}_h^{cr},\textup{div}\,y_h)_{\Omega}-(u_D^h ,y_h\cdot n)_{\Gamma_D}=0\,.
		\end{align}
		Resorting to the discrete integration-by-parts~formula~\eqref{eq:pi} in \eqref{thm:discrete_reconstruction.3}, for every $y_h\in \mathcal{R}T^0_N(\mathcal{T}_h)$, we find that
		\begin{align}\label{thm:discrete_reconstruction.4}
			\begin{aligned}
				( \widehat{u}_h^{cr}-u_h^{cr},\textup{div}\,y_h)_{\Omega} &=(  \Pi_h\widehat{u}_h^{cr}- \Pi_hu_h^{cr},\textup{div}\,y_h)_{\Omega} \\
				&= ( \Pi_h\widehat{u}_h^{cr},\textup{div}\,y_h)_{\Omega} +(\nabla_h u_h^{cr},\Pi_hy_h)_{\Omega} -(y_h\cdot n,\pi_h u_h^{cr} )_{\Gamma_D}
				\\&=0\,.
			\end{aligned}
		\end{align} 
		Since, on the other hand, we have that $\nabla_h( \widehat{u}_h^{cr}-u_h^{cr})=0$ a.e.\ in $\Omega$, \textit{i.e.}, $ \widehat{u}_h^{cr}-u_h^{cr}\in\mathcal{L}^0(\mathcal{T}_h)$, \eqref{thm:discrete_reconstruction.4}  
		implies that
        \begin{align}\label{thm:discrete_reconstruction.5}
            \widehat{u}_h^{cr}-u_h^{cr}\in (\textup{div}\,(\mathcal{R}T^0_N(\mathcal{T}_h)))^{\perp}\,,
        \end{align}
        where the orthogonality needs to be understood in $\mathcal{L}^0(\mathcal{T}_h)$ equipped with $(\cdot,\cdot)_{\Omega}$.
        Due to  $\vert \Gamma_D\vert>0$, and, thus, ${\Gamma_N\neq \partial\Omega}$, we have that 
        \begin{align*}
            \textup{div}\,(\mathcal{R}T^0_N(\mathcal{T}_h))=\mathcal{L}^0(\mathcal{T}_h)\,,
        \end{align*}
        so that from \eqref{thm:discrete_reconstruction.5},  we conclude that $\widehat{u}_h^{cr}=u_h^{cr}$ in $\mathcal{S}^{1,cr}(\mathcal{T}_h)$.
	\end{proof}
	\newpage
	
	\section{\textit{A priori} error analysis}\label{sec:apriori}
	
	\qquad In this section, resorting to the discrete convex duality relations established in Section~\ref{sec:discrete}, following \cite{BarGudKal24}, 
	we derive an \textit{a priori} error identity for  the discrete primal problem and the discrete dual problem at the same time. From this \textit{a priori} error identity, we derive an \textit{a priori} error estimate with an explicit error decay rate that is quasi-optimal. 
	To this end, we proceed analogously to the continuous setting and start with examining the \textit{discrete primal-dual gap estimator} $\eta_{\textup{gap},h}^2\colon K_h^{cr}\times K_h^{rt,*}\to [0,+\infty)$, for every $v_h\in K_h^{cr}$ and $y_h\in K_h^{rt,*}$ defined by
	\begin{align}\label{eq:discrete_primal_dual_gap_estimator}
		\eta_{\textup{gap},h}^2(v_h,y_h)\coloneqq I_h^{cr}(v_h)-D_h^{rt}(y_h)\,.
	\end{align}
	
	The discrete primal-dual gap estimator has the following integral representation.
	
	\begin{lemma}\label{lem:discrete_primal_dual_gap_estimator}
		For every $v_h\in K_h^{cr}$ and $y_h\in K_h^{rt,*}$,   we have that
		\begin{align*}
			\eta_{\textup{gap},h}^2(v_h,y_h)\coloneqq  \int_{\Omega}{\big\{\phi^*_h(\cdot,\Pi_h y_h)-\Pi_h y_h\cdot\nabla_hv_h+ \phi_h(\cdot,\nabla_hv_h)\big\}\,\mathrm{d}x} \,.
		\end{align*}
	\end{lemma}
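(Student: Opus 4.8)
The plan is to mimic the continuous argument from the proof of Lemma~\ref{lem:primal_dual_gap_estimator}, replacing the admissibility condition \eqref{eq:admissibility} by the discrete admissibility conditions \eqref{eq:discrete_admissibility.1},\eqref{eq:discrete_admissibility.2} encoded in the membership $y_h\in K_h^{rt,*}$, and the continuous integration-by-parts formula \eqref{eq:pi_cont} by its discrete counterpart \eqref{eq:pi}. First I would fix $v_h\in K_h^{cr}$ and $y_h\in K_h^{rt,*}$ and simplify the two energies. Since $\vert\nabla_h v_h\vert\leq\zeta_h$ a.e.\ in $\Omega$ and $\pi_h v_h=u_D^h$ a.e.\ on $\Gamma_D$, the indicator contributions in \eqref{eq:discrete_primal} vanish and $I_h^{cr}(v_h)=\int_{\Omega}{\phi_h(\cdot,\nabla_h v_h)\,\mathrm{d}x}-(f_h,\Pi_h v_h)_{\Omega}-(g_h,\pi_h v_h)_{\Gamma_N}$; likewise, since $\textup{div}\,y_h=-f_h$ a.e.\ in $\Omega$ and $y_h\cdot n=g_h$ a.e.\ on $\Gamma_N$, the indicators in \eqref{eq:discrete_dual} vanish and $D_h^{rt}(y_h)=-\int_{\Omega}{\phi^*_h(\cdot,\Pi_h y_h)\,\mathrm{d}x}+(y_h\cdot n,u_D^h)_{\Gamma_D}$.

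Subtracting, I obtain
\begin{align*}
\eta_{\textup{gap},h}^2(v_h,y_h)=\int_{\Omega}{\big\{\phi_h(\cdot,\nabla_h v_h)+\phi^*_h(\cdot,\Pi_h y_h)\big\}\,\mathrm{d}x}-(f_h,\Pi_h v_h)_{\Omega}-(g_h,\pi_h v_h)_{\Gamma_N}-(y_h\cdot n,u_D^h)_{\Gamma_D}\,.
\end{align*}
Next I would apply the discrete integration-by-parts formula \eqref{eq:pi} to $v_h$ and $y_h$. Using $\textup{div}\,y_h=-f_h$, so that $(\Pi_h v_h,\textup{div}\,y_h)_{\Omega}=-(f_h,\Pi_h v_h)_{\Omega}$, together with the splitting $(\pi_h v_h,y_h\cdot n)_{\partial\Omega}=(y_h\cdot n,u_D^h)_{\Gamma_D}+(g_h,\pi_h v_h)_{\Gamma_N}$ that follows from $\pi_h v_h=u_D^h$ on $\Gamma_D$, $y_h\cdot n=g_h$ on $\Gamma_N$, and the decomposition $\mathcal{S}_h^{\partial\Omega}=\mathcal{S}_h^{\Gamma_D}\dot{\cup}\mathcal{S}_h^{\Gamma_N}$, formula \eqref{eq:pi} rearranges to $-(f_h,\Pi_h v_h)_{\Omega}-(g_h,\pi_h v_h)_{\Gamma_N}-(y_h\cdot n,u_D^h)_{\Gamma_D}=-(\nabla_h v_h,\Pi_h y_h)_{\Omega}$. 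Substituting this into the displayed identity and writing $(\nabla_h v_h,\Pi_h y_h)_{\Omega}=\int_{\Omega}{\Pi_h y_h\cdot\nabla_h v_h\,\mathrm{d}x}$ yields exactly the claimed representation.

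The computation is entirely routine; the only point that requires a little care is the sign and boundary bookkeeping when invoking \eqref{eq:pi}, in particular that $\textup{div}\,y_h\in\mathcal{L}^0(\mathcal{T}_h)$ and $\pi_h v_h\in\mathcal{L}^0(\mathcal{S}_h)$ are element-wise resp.\ side-wise constant so that $(\Pi_h v_h,\textup{div}\,y_h)_{\Omega}=(v_h,\textup{div}\,y_h)_{\Omega}$ and the boundary term factors as stated. Unlike the continuous Lemma~\ref{lem:primal_dual_gap_estimator}, no singular part appears and there is no analogue of $\eta_{\textup{gap},B}^2$, since admissible discrete dual fields are genuine Raviart--Thomas vector fields; hence there is no real obstacle.
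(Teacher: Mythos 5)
Your proposal is correct and follows essentially the same route as the paper's proof: cancel the indicator terms using the admissibility of $v_h$ and $y_h$, replace $-(f_h,\Pi_h v_h)_\Omega$ by $(\operatorname{div} y_h,\Pi_h v_h)_\Omega$ and the boundary terms by $(y_h\cdot n,\pi_h v_h)_{\partial\Omega}$, and then invoke the discrete integration-by-parts formula \eqref{eq:pi} to collapse everything to $-(\nabla_h v_h,\Pi_h y_h)_\Omega$. The only difference is presentational (you subtract the energies first, then simplify), and the aside that $(\Pi_h v_h,\operatorname{div}y_h)_\Omega=(v_h,\operatorname{div}y_h)_\Omega$ is true but not actually needed since \eqref{eq:pi} is already stated with $\Pi_h v_h$.
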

	
	\begin{remark}[Interpretation of the discrete primal-dual gap estimator] 
		The estimator $\eta_{\textup{gap},h}^2$ measures the violation of the discrete convex optimality relations \eqref{eq:discrete_optimality}.
	\end{remark}
	
	\begin{proof}[Proof (of Lemma \ref{lem:discrete_primal_dual_gap_estimator})]\let\qed\relax
		Using the discrete admissibility condition \eqref{eq:discrete_admissibility.1}, the discrete integration-by-parts formula \eqref{eq:pi}, and the binomial formula, 
		for every $v_h\in K_h^{cr}$ and $y_h\in K_h^{rt,*}$,~we~find~that
		\begin{align*}
			I_h^{cr}(v_h)-D_h^{rt}(y_h)&=
			\tfrac{1}{2}\| \nabla_h v_h\|_{2,\Omega}^2-(f_h,\Pi_h v_h)_{\Omega}-(g_h,\pi_hv_h)_{\Gamma_N}\\&\quad+\int_{\Omega}{\phi^*_h(\cdot,\Pi_h y_h)\,\mathrm{d}x}-(y_h\cdot n,u_D^h)_{\Gamma_D}
			\\&= \int_{\Omega}{\phi_h(\cdot,\nabla_h v_h)\,\mathrm{d}x}+(\textup{div}\,y_h,\Pi_hv_h)_{\Omega}-(y_h\cdot n,\pi_h v_h)_{\Gamma_N}\\&\quad+\int_{\Omega}{\phi^*_h(\cdot,\Pi_h y_h)\,\mathrm{d}x}-(y_h\cdot n,\pi_h v_h)_{\Gamma_D}
			\\&=  \int_{\Omega}{\big\{\phi^*_h(\cdot,\Pi_h y_h)-\Pi_h y_h\cdot\nabla_hv_h+ \phi_h(\cdot,\nabla_hv_h)\big\}\,\mathrm{d}x}\,,
		\end{align*}
    which is the claimed integral representation of $\eta_{\textup{gap},h}^2\colon K_h^{cr}\times K_h^{rt,*}\to \mathbb{R}$.
	\end{proof}

    In the next step, we  derive meaningful representations of the \emph{discrete optimal strong convexity measures} for the discrete primal energy functional~\eqref{eq:discrete_primal}  at  the 
    discrete primal solution $u_h^{cr}\in K_h^{cr}$, \textit{i.e.}, for  $\rho_{I_h^{cr}}^2\colon\hspace*{-0.2em} K_h^{cr}\to [0,+\infty)$, for every $v_h\in K_h^{cr}$ defined by
	\begin{align}\label{def:discrete_optimal_primal_error}
		\smash{\rho_{I_h^{cr}}^2(v_h)\coloneqq I_h^{cr}(v_h)-I_h^{cr}(u_h^{cr})\,,}
	\end{align}
	and for the negative of the discrete dual energy functional \eqref{eq:discrete_dual}, \textit{i.e.}, for $\rho_{\smash{-D_h^{rt}}}^2\colon\hspace*{-0.1em} K_h^{rt,*}\hspace*{-0.1em}\to\hspace*{-0.1em} [0,+\infty)$,  
	for every~$\smash{y_h\hspace*{-0.1em}\in\hspace*{-0.1em} K_h^{rt,*}}$ defined by
	\begin{align}\label{def:discrete_optimal_dual_error}
		\smash{\rho_{-D_h^{rt}}^2(y_h)\coloneqq- D_h^{rt}(y_h)+D_h^{rt}(z_h^{rt})\,,}
	\end{align}
	which \hspace*{-0.1em}will \hspace*{-0.1em}serve \hspace*{-0.1em}as \hspace*{-0.1em}\emph{`natural'} \hspace*{-0.1em}error \hspace*{-0.1em}quantities \hspace*{-0.1em}in \hspace*{-0.1em}the \hspace*{-0.1em}discrete \hspace*{-0.1em}primal-dual \hspace*{-0.1em}gap \hspace*{-0.1em}identity \hspace*{-0.1em}(\textit{cf}.\ \hspace*{-0.1em}Theorem~\hspace*{-0.1em}\ref{thm:discrete_prager_synge_identity}).\enlargethispage{5mm}
	
	\begin{lemma}[representations of the discrete optimal strong convexity measures]\label{lem:discrete_strong_convexity_measures}
		The following statements apply:
		\begin{itemize}[noitemsep,topsep=2pt,leftmargin=!,labelwidth=\widthof{(ii)}]
			\item[(i)] For every $v_h\in K_h^{cr}$, we have that
			\begin{align*}
				\rho_{I_h^{cr}}^2(v_h,u_h^{cr})=\tfrac{1}{2}\|\nabla_h v_h-\nabla_h u_h^{cr}\|_{2,\Omega}^2+\big(\tfrac{\vert \Pi_h z_h^{rt}\vert}{\zeta_h}-1,\zeta_h^2-\nabla_h u_h^{cr}\cdot\nabla_h v_h\big)_{\smash{\{\vert \nabla_h u_h^{cr}\vert=\zeta_h\}}}\,.
			\end{align*}
			\item[(ii)] For every $y_h\in K_h^{rt,*}$, we have that
			\begin{align*}
				\rho_{-D_h^{rt}}^2(y_h,z_h^{rt})=\tfrac{1}{2}(D_t^2\phi^*_h\{\Pi_h y_h,\Pi_h z_h^{rt}\}(\Pi_h y_h-\Pi_hz_h^{rt}), \Pi_h y_h-\Pi_hz_h^{rt})_{\Omega} \,.
			\end{align*}
		\end{itemize}
	\end{lemma}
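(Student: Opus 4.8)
The plan is to transcribe, essentially line by line, the proof of Lemma~\ref{lem:strong_convexity_measures}, replacing the continuous objects by their finite element counterparts and using the discrete integration-by-parts formula \eqref{eq:pi} in place of \eqref{eq:pi_cont}. The main simplification over the continuous case is that the discrete dual problem is posed on $\mathcal{R}T^0(\mathcal{T}_h)\subseteq (L^1(\Omega))^d$, so there is no singular part; accordingly all terms involving $\mu^s$ or $\nu^s$ vanish, and the two arguments collapse into a single quadratic term each.

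\emph{Ad (i).} I would start from the definition \eqref{def:discrete_optimal_primal_error}, expand the quadratic parts of $I_h^{cr}$ in \eqref{eq:discrete_primal}, and note that, since $v_h,u_h^{cr}\in K_h^{cr}$, the constraint indicators vanish and $v_h-u_h^{cr}\in\mathcal{S}^{1,cr}_D(\mathcal{T}_h)$. Applying \eqref{eq:pi} to $v_h-u_h^{cr}$ and $z_h^{rt}$ together with the discrete admissibility conditions \eqref{eq:discrete_admissibility.1},\eqref{eq:discrete_admissibility.2} gives $-(f_h,\Pi_h(v_h-u_h^{cr}))_\Omega-(g_h,\pi_h(v_h-u_h^{cr}))_{\Gamma_N}=-(\Pi_hz_h^{rt},\nabla_hv_h-\nabla_hu_h^{cr})_\Omega$, whence
\begin{align*}
\rho_{I_h^{cr}}^2(v_h)=\tfrac12\|\nabla_hv_h\|_{2,\Omega}^2-\tfrac12\|\nabla_hu_h^{cr}\|_{2,\Omega}^2-(\Pi_hz_h^{rt},\nabla_hv_h-\nabla_hu_h^{cr})_\Omega\,.
\end{align*}
Exactly as in \eqref{lem:strong_convexity_measures.0.2}, the discrete convex optimality relation \eqref{eq:discrete_optimality} yields $\Pi_hz_h^{rt}=\nabla_hu_h^{cr}$ a.e.\ on $\{\vert\nabla_hu_h^{cr}\vert<\zeta_h\}$ and $\Pi_hz_h^{rt}=\tfrac{\vert\Pi_hz_h^{rt}\vert}{\zeta_h}\nabla_hu_h^{cr}$ a.e.\ on $\{\vert\nabla_hu_h^{cr}\vert=\zeta_h\}$. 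Inserting this splitting, using the binomial formula on the first set and $\nabla_hu_h^{cr}\cdot\nabla_hu_h^{cr}=\zeta_h^2$ a.e.\ on the second, produces the claimed identity.

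\emph{Ad (ii).} From \eqref{def:discrete_optimal_dual_error}, since $y_h,z_h^{rt}\in K_h^{rt,*}$ the indicator terms in \eqref{eq:discrete_dual} vanish, so that
\begin{align*}
\rho_{-D_h^{rt}}^2(y_h)=\int_\Omega\big\{\phi^*_h(\cdot,\Pi_hy_h)-\phi^*_h(\cdot,\Pi_hz_h^{rt})\big\}\,\mathrm{d}x+\big((z_h^{rt}-y_h)\cdot n,u_D^h\big)_{\Gamma_D}\,.
\end{align*}
A second-order Taylor expansion of the integrand around $\Pi_hz_h^{rt}$ with integral remainder then gives the quadratic term $\tfrac12(D_t^2\phi^*_h\{\Pi_hy_h,\Pi_hz_h^{rt}\}(\Pi_hy_h-\Pi_hz_h^{rt}),\Pi_hy_h-\Pi_hz_h^{rt})_\Omega$ plus the linear term $(D_t\phi^*_h(\cdot,\Pi_hz_h^{rt}),\Pi_hy_h-\Pi_hz_h^{rt})_\Omega$. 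It remains to show this linear term cancels the $\Gamma_D$ boundary contribution: substituting $D_t\phi^*_h(\cdot,\Pi_hz_h^{rt})=\nabla_hu_h^{cr}$ from \eqref{eq:discrete_optimality}, observing $y_h-z_h^{rt}\in\mathcal{R}T^0_N(\mathcal{T}_h)$ with $\textup{div}(y_h-z_h^{rt})=0$, and applying \eqref{eq:pi} to $u_h^{cr}$ and $y_h-z_h^{rt}$ with $\pi_hu_h^{cr}=u_D^h$ on $\Gamma_D$ and $(y_h-z_h^{rt})\cdot n=0$ on $\Gamma_N$ yields $(\nabla_hu_h^{cr},\Pi_hy_h-\Pi_hz_h^{rt})_\Omega=((y_h-z_h^{rt})\cdot n,u_D^h)_{\Gamma_D}$, which exactly cancels the boundary term and leaves the asserted representation.

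The proof is therefore essentially a direct transcription; the only points requiring care are the bookkeeping of the $\Gamma_D/\Gamma_N$ boundary pieces in \eqref{eq:pi}, and the justification of the second-order Taylor expansion of $\phi^*_h$ with integral remainder despite $\phi^*_h$ being only $C^{1,1}$ and not $C^2$ (its Hessian jumps across $\{\vert s\vert=\zeta_h\}$, \textit{cf}.\ \eqref{eq:D2_phi_prime}). The latter is the main technical subtlety: it is resolved by noting that $t\mapsto D_t\phi^*_h(x,t)$ is Lipschitz, hence absolutely continuous along line segments, so that the fundamental theorem of calculus applies and the averaged-Hessian quantity $D_t^2\phi^*_h\{\cdot,\cdot\}(\cdot)$ from Lemma~\ref{lem:strong_convexity_measures}(ii) is well defined pointwise a.e.\ and renders the expansion exact.
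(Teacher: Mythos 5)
Your proof follows the paper's argument essentially verbatim: part (i) via the discrete integration-by-parts formula \eqref{eq:pi}, the discrete admissibility conditions \eqref{eq:discrete_admissibility.1},\eqref{eq:discrete_admissibility.2}, the discrete convex optimality relation \eqref{eq:discrete_optimality}, and the binomial formula; part (ii) via a Taylor expansion of $\phi_h^*$ followed by cancelation of the linear term against the $\Gamma_D$ boundary contribution using \eqref{eq:pi}. Your closing remark on justifying the second-order expansion for the merely $C^{1,1}$ function $\phi_h^*$ addresses a point the paper passes over silently; otherwise the two proofs coincide.
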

	
	\begin{remark}
		\begin{itemize}[noitemsep,topsep=2pt,leftmargin=!,labelwidth=\widthof{(ii)}]
			\item[(i)] Since $\{\vert \Pi_hz_h^{rt}\vert\ge \zeta_h\}=\{\vert \nabla_h u_h^{cr}\vert=\zeta_h\}$ (\textit{cf}.\ \eqref{eq:discrete_optimality}) and $\nabla_h u_h^{cr}\cdot\nabla_h v_h\leq \zeta_h^2$ a.e.\ in $\{\vert \nabla_h u_h^{cr}\vert=\zeta_h\}$ for all $v_h\in K_h^{cr}$, for every $v_h\in K_h^{cr}$, we have that
			\begin{align*}
				\smash{\big(\tfrac{\vert \Pi_hz_h^{rt}\vert}{\zeta_h}-1\big)(\zeta_h^2-\nabla_h u_h^{cr}\cdot\nabla_h v_h)\ge 0\quad\text{ a.e.\ in }\{\vert \nabla_h u_h^{cr}\vert=\zeta_h\}}\,.
			\end{align*}
			
			\item[(ii)] Analogously to \eqref{eq:D2_phi_prime}, since for a.e.\ $x\in \Omega$ and every $t\in \mathbb{R}^d$, we have that
			\begin{align}\label{eq:D2_phih_prime}
				D_t^2\phi^*_h(x,t) =\begin{cases}
					\mathrm{I}_{d\times d}&\text{ if }\vert t\vert \leq \zeta_h(x)\,,\\
					\frac{\zeta_h(x)}{\vert t\vert }(\mathrm{I}_{d\times d}-\frac{t\otimes t}{\vert t\vert^2})&\text{ if }\vert t\vert>\zeta_h(x)\,,
				\end{cases} 
			\end{align} 
			for a.e.\ $x\in \Omega$ and every $t,s\in \mathbb{R}^d$, by the Cauchy--Schwarz inequality, we observe that
			\begin{align*}
				D_t^2\phi^*_h(x,t):s\otimes s &= \begin{cases}
					\vert s\vert^2&\text{ if }\vert t\vert \leq \zeta_h(x)\,,\\
						\frac{\zeta_h(x)}{\vert t\vert }(\vert s\vert^2-\frac{(t\cdot s)^2}{\vert t\vert^2})&\text{ if }\vert t\vert>\zeta_h(x)
				\end{cases}\\
					&\ge \begin{cases}
					\vert s\vert^2&\text{ if }\vert t\vert \leq \zeta_h(x)\,,\\
					0&\text{ if }\vert t\vert>\zeta_h(x)\,.
				\end{cases}
			\end{align*}
		\end{itemize}
	\end{remark}

	\begin{proof}[Proof (of Lemma \ref{lem:discrete_strong_convexity_measures})]
		\textit{ad (i).} Using the discrete integration-by-parts formula \eqref{eq:pi},  for every $v_h\in K_h^{cr}$, due to $\pi_h v_h=\pi_h u_h^{cr}$ a.e.\ on $\Gamma_D$, we find that 
		\begin{align*}
			I_h^{cr}(v_h)-I_h^{cr}(u_h^{cr})&=\tfrac{1}{2}\|\nabla_h v_h\|_{2,\Omega}^2-\tfrac{1}{2}\|\nabla_h u_h^{cr}\|_{2,\Omega}^2\\&\quad-(f_h,\Pi_h v_h-\Pi_h u_h^{cr})_{\Omega}-(g_h,\pi_h v_h-\pi_h u_h^{cr})_{\Gamma_N}
            \\&=\tfrac{1}{2}\|\nabla_h v_h\|_{2,\Omega}^2-\tfrac{1}{2}\|\nabla_h u_h^{cr}\|_{2,\Omega}^2
            \\&\quad+(\textup{div}\,y_h,\Pi_h v_h-\Pi_h u_h^{cr})_{\Omega}-(y_h\cdot n,\pi_h v_h-\pi_h u_h^{cr})_{\partial\Omega}\\&
			=\tfrac{1}{2}\|\nabla_h v_h\|_{2,\Omega}^2-\tfrac{1}{2}\|\nabla_h u_h^{cr}\|_{2,\Omega}^2-(\Pi_h z_h^{rt},\nabla_h v_h-\nabla_h u_h^{cr})_{\Omega} \,.
		\end{align*}
		Next, using that, due to the discrete convex optimality relation \eqref{eq:discrete_optimality}, we have that
		\begin{align*}
			\begin{aligned}
				\Pi_h z_h^{rt}&=\nabla_h u_h^{cr}&&\quad \text{ a.e.\ in }\{\vert\nabla_h u_h^{cr}\vert<\zeta_h\}\,,\\
				\Pi_h z_h^{rt}&=\tfrac{\vert \Pi_h z_h^{rt}\vert}{\zeta_h}\,\nabla_h u_h^{cr}&&\quad \text{ a.e.\ in }\{\vert \nabla_h u_h^{cr}\vert=\zeta_h\}\,.
			\end{aligned}
		\end{align*} 
		Therefore, using the binomial formula, we find that 
		\begin{align*}
			I_h^{cr}(v_h)-I_h^{cr}(u_h^{cr})&=\tfrac{1}{2}\|\nabla_h v_h\|_{2,\Omega}^2-\tfrac{1}{2}\|\nabla_h u_h^{cr}\|_{2,\Omega}^2-(\nabla_h u_h^{cr},\nabla_h v_h-\nabla_h u_h^{cr})_{\Omega}\\&\quad+
			\big(\big(\tfrac{\vert \Pi_h z_h^{rt}\vert}{\zeta_h}-1\big)\nabla_h u_h^{cr},\nabla_h u_h^{cr}-\nabla_h v_h\big)_{\smash{\{\vert\nabla_h u_h^{cr} \vert=\zeta_h\}}}
			\\&=\tfrac{1}{2}\|\nabla_h v_h-\nabla_h u_h^{cr}\|_{2,\Omega}^2
			+\big(\big(\tfrac{\vert \Pi_h z_h^{rt}\vert}{\zeta_h}-1\big),\zeta_h^2-\nabla_h u_h^{cr}\cdot\nabla_h v_h\big)_{\smash{\{\vert\nabla_h u_h^{cr} \vert=\zeta_h\}}}\,,
		\end{align*}
        which is the claimed representation of  $\rho_{I_h^{cr}}^2\colon \smash{K_h^{cr}}\to [0,+\infty)$.

		\textit{ad (ii).} Using the discrete admissibility condition \eqref{eq:discrete_admissibility.1} and 
		 Taylor expansion,~for~every~$y_h\in K_h^{rt,*}$, due to $y_h\cdot n =z_h^{rt}\cdot n$ a.e.\ on $\Gamma_N$, we find that\vspace{-0.5mm}\enlargethispage{10mm}
		\begin{align*}
			-D_h^{rt}(y_h)+D_h^{rt}(z_h^{rt})&=\int_{\Omega}{\phi^*_h(\cdot,\Pi_h y_h)}-\int_{\Omega}{\phi^*_h(\cdot,\Pi_h z_h^{rt})}-(y_h\cdot n-z_h^{rt}\cdot n, u_D^h)_{\Gamma_D}\\&
			=(D_t\phi^*_h(\cdot,\Pi_h z_h^{rt}),\Pi_h y_h-\Pi_hz_h^{rt})_{\Omega}-(y_h\cdot n-z_h^{rt}\cdot n, u_D^h)_{\partial\Omega}\\&\quad+\tfrac{1}{2}(D_t^2\phi^*_h\{\Pi_h y_h,\Pi_h z_h^{rt}\}(\Pi_h y_h-\Pi_hz_h^{rt}), \Pi_h y_h-\Pi_hz_h^{rt})_{\Omega}
			\\&=\tfrac{1}{2}(D_t^2\phi^*_h\{\Pi_h y_h,\Pi_h z_h^{rt}\}(\Pi_h y_h-\Pi_hz_h^{rt}), \Pi_h y_h-\Pi_hz_h^{rt})_{\Omega}
			\,,
		\end{align*}
		where \hspace{-0.1mm}we \hspace{-0.1mm}used \hspace{-0.1mm}that, \hspace{-0.1mm}by \hspace{-0.1mm}the \hspace{-0.1mm}discrete \hspace{-0.1mm}convex \hspace{-0.1mm}optimality \hspace{-0.1mm}condition \hspace{-0.1mm}\eqref{eq:discrete_optimality}, \hspace{-0.1mm}the \hspace{-0.1mm}discrete \hspace{-0.1mm}\mbox{integration-by-} parts \hspace{-0.1mm}formula \hspace{-0.1mm}\eqref{eq:pi}, \hspace{-0.1mm}and \hspace{-0.1mm}the \hspace{-0.1mm}discrete \hspace{-0.1mm}admissibility \hspace{-0.1mm}condition \hspace{-0.1mm}\eqref{eq:discrete_admissibility.1},  \hspace{-0.1mm}for \hspace{-0.1mm}every \hspace{-0.1mm}$y_h\hspace{-0.15em}\in \hspace{-0.15em}K_h^{rt,*}$,~\hspace{-0.1mm}it~\hspace{-0.1mm}holds~\hspace{-0.1mm}that
		\begin{align*}
			(D_t\phi^*_h(\cdot,\Pi_h z_h^{rt}),\Pi_h y_h-\Pi_hz_h^{rt})_{\Omega}&=(\nabla_h u_h^{cr},\Pi_h y_h-\Pi_hz_h^{rt})_{\Omega}\\&=(\textup{div}\,z_h^{rt}-\textup{div}\,y_h,\Pi_hu_h^{cr} )_{\Omega}+(y_h\cdot n-z_h^{rt}\cdot n, \pi_hu_h^{cr})_{\partial\Omega}
			\\&=(y_h\cdot n-z_h^{rt}\cdot n, u_D^h)_{\partial\Omega}\,,
		\end{align*} 
            which is the claimed representation of  $\smash{\rho_{-D_h^{rt}}^2\colon K_h^{rt,*}\to [0,+\infty)}$.
	\end{proof}
	
	Eventually, we have everything at our disposal to establish a discrete \textit{a posteriori} error identity that identifies the \textit{discrete primal-dual total error} $\rho_{\textup{tot},h}^2\colon K_h^{cr}\times K_h^{rt,*}\to [0,+\infty)$, for every $v_h\in K_h^{cr}$ and $y_h\in K_h^{rt,*}$ defined by 
	\begin{align}\label{eq:discrete_primal_dual_error}
		\rho_{\textup{tot},h}^2(v_h,y_h)\coloneqq \rho_{I_h^{cr}}^2(v_h)+\rho_{-D_h^{rt}}^2(y_h)\,,
	\end{align}
	with the discrete primal-dual gap estimator \eqref{eq:discrete_primal_dual_gap_estimator}.
	
	\begin{theorem}[discrete primal-dual gap identity]\label{thm:discrete_prager_synge_identity}
		For every $v_h\in K_h^{cr}$ and $y_h\in K_h^{rt,*}$, we have that
		\begin{align*}
			\rho_{\textup{tot},h}^2(v_h,y_h)=\eta_{\textup{gap},h}^2(v_h,y_h)\,.
		\end{align*}
	\end{theorem}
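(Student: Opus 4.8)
The plan is to proceed exactly as in the continuous case (cf.\ the proof of Theorem~\ref{thm:prager_synge_identity}): the identity is a purely algebraic consequence of unfolding the definitions of the four quantities involved together with the discrete strong duality relation, so no new analytic ingredient is needed beyond Theorem~\ref{thm:discrete_duality}. Concretely, I would fix $v_h\in K_h^{cr}$ and $y_h\in K_h^{rt,*}$, insert the definition \eqref{eq:discrete_primal_dual_error} of $\rho_{\textup{tot},h}^2$ and then the definitions \eqref{def:discrete_optimal_primal_error} and \eqref{def:discrete_optimal_dual_error} of the two discrete optimal strong convexity measures, obtaining
\begin{align*}
\rho_{\textup{tot},h}^2(v_h,y_h)=\rho_{I_h^{cr}}^2(v_h)+\rho_{-D_h^{rt}}^2(y_h)=\big(I_h^{cr}(v_h)-I_h^{cr}(u_h^{cr})\big)+\big(D_h^{rt}(z_h^{rt})-D_h^{rt}(y_h)\big)\,.
\end{align*}
Rearranging the four terms and invoking the discrete strong duality relation \eqref{eq:discrete_strong_duality}, i.e.\ $I_h^{cr}(u_h^{cr})=D_h^{rt}(z_h^{rt})$ (valid by Theorem~\ref{thm:discrete_duality}(ii)), the two ``optimal-value'' terms cancel, leaving $I_h^{cr}(v_h)-D_h^{rt}(y_h)$, which is precisely $\eta_{\textup{gap},h}^2(v_h,y_h)$ by \eqref{eq:discrete_primal_dual_gap_estimator}.

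A second, more hands-on route would be to add the two pointwise representations obtained in Lemma~\ref{lem:discrete_strong_convexity_measures} and check by an element-wise computation (a binomial identity together with the explicit form \eqref{eq:D2_phih_prime} of $D_t^2\phi^*_h$) that their sum coincides with the integral representation of $\eta_{\textup{gap},h}^2$ from Lemma~\ref{lem:discrete_primal_dual_gap_estimator}; this is, however, strictly more work and is subsumed by the economical argument above once discrete strong duality is available.

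The only point requiring care---the analogue of the ``main obstacle''---is to ensure the statement is not vacuous: one must know that the discrete primal solution $u_h^{cr}\in K_h^{cr}$ and a discrete dual maximizer $z_h^{rt}\in K_h^{rt,*}$ actually exist, so that the right-hand sides of \eqref{def:discrete_optimal_primal_error}--\eqref{def:discrete_optimal_dual_error} are finite and the strong duality relation \eqref{eq:discrete_strong_duality} makes sense. Existence and uniqueness of $u_h^{cr}$ follow from the direct method applied to \eqref{eq:discrete_primal}; existence of $z_h^{rt}$, its membership in $K_h^{rt,*}$ (i.e.\ the discrete admissibility conditions \eqref{eq:discrete_admissibility.1}--\eqref{eq:discrete_admissibility.2}), and the discrete strong duality \eqref{eq:discrete_strong_duality} are exactly the content of Theorem~\ref{thm:discrete_duality}(ii), already established. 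Hence nothing beyond this bookkeeping remains, and the claim follows verbatim from the discrete counterpart of the computation in the proof of Theorem~\ref{thm:prager_synge_identity}.
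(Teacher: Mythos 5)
Your first, "economical" route is exactly the paper's proof: unfold the definitions of $\rho_{\textup{tot},h}^2$, $\rho_{I_h^{cr}}^2$, $\rho_{-D_h^{rt}}^2$, $\eta_{\textup{gap},h}^2$ and cancel the optimal-value terms via the discrete strong duality relation \eqref{eq:discrete_strong_duality}. The additional remarks on well-posedness and the alternative pointwise route are correct but not needed, as you yourself note.
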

	
	\begin{proof}\let\qed\relax
		Using the definitions \eqref{eq:discrete_primal_dual_gap_estimator}, \eqref{def:discrete_optimal_primal_error},  \eqref{def:discrete_optimal_dual_error}, \eqref{def:discrete_optimal_dual_error}, and the discrete strong duality relation~\eqref{eq:discrete_strong_duality}, for every $v_h\in K_h^{cr}$ and $y_h\in K_h^{rt,*}$, we find that
		\begin{align*}
			\rho_{\textup{tot},h}^2(v_h,y_h)&=\rho_{I_h^{cr}}^2(v_h,u_h^{cr})+\rho_{-D_h^{rt}}^2(y_h,z_h^{rt})
			\\&=I_h^{cr}(v_h)-I_h^{cr}(u_h^{cr})+D_h^{rt}(z_h^{rt})-D_h^{rt}(y_h)
			\\&=I_h^{cr}(v_h)-D_h^{rt}(y_h)
			\\&=\eta_{\textup{gap},h}^2(v_h,y_h)\,.\tag*{$\qedsymbol$}
		\end{align*}
	\end{proof}
	
	If we insert the (Fortin) quasi-interpolations \eqref{CR-interpolant} and \eqref{RT-interpolant} of the primal and the~dual~solution (assuming that the singular part vanishes, \textit{i.e.}, $\mu^s=0$), respectively,  in the discrete primal-dual identity (\textit{cf}.\  Theorem \ref{thm:discrete_prager_synge_identity}), we arrive at an \textit{a priori} error identity, which allows us to derive error decay rates in dependence of the regularity of the dual solution.\enlargethispage{5mm}
 
	\begin{theorem}[\textit{a priori} error identity and error decay rates]\label{thm:apriori_identity}
		If $f_h\hspace*{-0.1em}\coloneqq \hspace*{-0.1em}\Pi_h f\hspace*{-0.1em}\in \hspace*{-0.1em}\mathcal{L}^0(\mathcal{T}_h)$,~${g_h\hspace*{-0.1em}\coloneqq\hspace*{-0.1em}\pi_h g}$ $\in \mathcal{L}^0(\mathcal{S}_h^{\Gamma_N})$, $u_D^h\coloneqq \pi_h u_D\in \mathcal{L}^0(\mathcal{S}_h^{\Gamma_D})$, and $\zeta_h\coloneqq\Pi_h \zeta\in \mathcal{L}^0(\mathcal{T}_h)$, the following statements~apply:
		\begin{itemize}[noitemsep,topsep=2pt,leftmargin=!,labelwidth=\widthof{(ii)}]
			\item[(i)] If $\mu^s=0$ and $z\in V_{p,q}(\Omega)$, where $p>2$ and $q>\frac{2d}{d+2}$, then
			$\Pi_h^{cr}u\in K_h^{cr}$, $\Pi_h^{rt}z\in K_h^{rt,*}$, and 
			\begin{align*}
				\rho_{\textup{tot},h}^2(\Pi_h^{cr}u,\Pi_h^{rt}z)&= \int_{\Omega}{\big\{\phi^*_h(\cdot,\Pi_h\Pi_h^{rt}z)-\Pi_h \Pi_h^{rt}z\cdot\Pi_h \nabla u+ \phi_h(\cdot,\Pi_h \nabla u)\big\}\,\mathrm{d}x} 
				\\&\leq \tfrac{1}{2} (\sqrt{d}+1)\,\|z-\Pi_h\Pi_h^{rt}z\|_{2,\Omega}^2+\tfrac{3}{2}
				\|\zeta_h-	\zeta\|_{2,\Omega}^2\,.
			\end{align*} 
			\item[(ii)] If, in addition, $z\in (W^{\nu,2}(\Omega))^d$ and $\zeta\in W^{\nu,2}(\Omega)$, where $\nu \in (0,1]$, 
			then 
			\begin{align*}
				\rho_{\textup{tot},h}^2(\Pi_h^{cr}u,\Pi_h^{rt}z)\leq c\, h^{2\nu}\,\big(\vert z\vert_{\nu,2,\Omega}^2+\vert\zeta\vert_{\nu,2,\Omega}^2\big)\,.
			\end{align*}
		\end{itemize} 
	\end{theorem}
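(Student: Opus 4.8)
The plan is to combine the discrete convex--duality identities of Section~\ref{sec:discrete} with the commutation properties of the Fortin interpolation operators, and then to estimate the resulting error density by an elementary pointwise Fenchel--Young gap that is governed by the approximation errors of $z$ and $\zeta$ only. I would first check that the two interpolants are admissible. By \eqref{eq:grad_preservation}, $\nabla_h\Pi_h^{cr}u=\Pi_h\nabla u$; since $|\nabla u|\le\zeta$ a.e.\ and $\zeta_h=\Pi_h\zeta$, Jensen's inequality on each $T\in\mathcal{T}_h$ gives $|\nabla_h\Pi_h^{cr}u|=|\langle\nabla u\rangle_T|\le\langle|\nabla u|\rangle_T\le\langle\zeta\rangle_T=\zeta_h$, and by \eqref{eq:trace_preservation} together with $u=u_D$ on $\Gamma_D$ one gets $\pi_h\Pi_h^{cr}u=\pi_hu=\pi_hu_D=u_D^h$ on $\Gamma_D$, so $\Pi_h^{cr}u\in K_h^{cr}$. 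Since $\mu^s=0$, the remark following Theorem~\ref{thm:duality} yields the regular admissibility \eqref{eq:admissibility_regular},\eqref{eq:admissibility_regular.2} for $z$, so that \eqref{eq:div_preservation} gives $\textup{div}\,\Pi_h^{rt}z=\Pi_h\textup{div}\,z=-\Pi_hf=-f_h$ and \eqref{eq:normal_trace_preservation} gives $\Pi_h^{rt}z\cdot n=\pi_h(z\cdot n)=\pi_hg=g_h$ on $\Gamma_N$ (here $z\in V_{p,q}(\Omega)$, $p>2$, ensures the normal trace is a function agreeing with $g$ on $\Gamma_N$), hence $\Pi_h^{rt}z\in K_h^{rt,*}$.

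With both interpolants admissible, Theorem~\ref{thm:discrete_prager_synge_identity} gives $\rho_{\textup{tot},h}^2(\Pi_h^{cr}u,\Pi_h^{rt}z)=\eta_{\textup{gap},h}^2(\Pi_h^{cr}u,\Pi_h^{rt}z)$, and Lemma~\ref{lem:discrete_primal_dual_gap_estimator} rewrites the right-hand side as an integral; inserting $\nabla_h\Pi_h^{cr}u=\Pi_h\nabla u$ from \eqref{eq:grad_preservation} produces exactly the stated integral identity in part~(i). The remaining task — the source of both (i) and (ii) — is to bound this integral by $\tfrac12(\sqrt d+1)\|z-\Pi_h\Pi_h^{rt}z\|_{2,\Omega}^2+\tfrac32\|\zeta_h-\zeta\|_{2,\Omega}^2$.

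\emph{Upper bound (the main obstacle).} Abbreviate $A:=\Pi_h\Pi_h^{rt}z$ and $B:=\Pi_h\nabla u$; by the first step, $|B|\le\zeta_h$ a.e., so $\phi_h(\cdot,B)=\tfrac12|B|^2$, and by the Fenchel--Young equality $\phi_h^*(\cdot,A)=A\cdot P_hA-\tfrac12|P_hA|^2$ with $P_hA:=D_t\phi_h^*(\cdot,A)$ the pointwise truncation of $A$ at level $\zeta_h$. Hence the integrand equals $(A-P_hA)\cdot(P_hA-B)+\tfrac12|P_hA-B|^2$, both summands being nonnegative (the first by the obtuse-angle property of the truncation, since $B$ lies in the ball of radius $\zeta_h$). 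For the quadratic term I would use that $s\mapsto D_t\phi_h^*(\cdot,s)$ and $\ell\mapsto(\text{truncation at level }\ell)$ are $1$-Lipschitz and that $\nabla u=D_t\phi^*(\cdot,z)$ by \eqref{eq:optimality.1}, obtaining elementwise $|P_hA-B|\le\langle|A-z|\rangle_T+\langle|\zeta_h-\zeta|\rangle_T$ and hence, by Jensen, $\int_\Omega\tfrac12|P_hA-B|^2\le\|z-A\|_{2,\Omega}^2+\|\zeta-\zeta_h\|_{2,\Omega}^2$. For the obtuse-angle term one writes, on each $T$, the exact identity (using $A-P_hA=(|A|-\zeta_h)_+\,A/|A|$, $|\nabla u|=\min(|z|,\zeta)$, $\nabla u/|\nabla u|=z/|z|$ from \eqref{eq:optimality.1}, and $1-p\cdot q=\tfrac12|p-q|^2$ for unit vectors $p,q$)
\[
(A-P_hA)\cdot(P_hA-B)=(|A|-\zeta_h)_+\,\Big\langle\,(\zeta-|z|)_+ +\tfrac12\,|\nabla u|\,\big|\tfrac{A}{|A|}-\tfrac{z}{|z|}\big|^2\,\Big\rangle_T\,,
\]
and then absorbs the prefactor $(|A|-\zeta_h)_+\le\langle|\Pi_h^{rt}z-z|\rangle_T+\langle(|z|-\zeta)_+\rangle_T$ against the two small quantities on the right: the direction term is handled by $|A/|A|-z/|z||\le 2|A-z|/\max(|A|,|z|)$ (so that $\tfrac12|\nabla u|\,|A/|A|-z/|z||^2\le 2|A-z|^2/\max(|A|,|z|)$ and the prefactor is absorbed to leave $\|z-A\|_{2,T}^2$), while for the term with $(\zeta-|z|)_+$ the delicate point is that $(|z|-\zeta)_+$ and $(\zeta-|z|)_+$ have disjoint supports, so a covariance/Poincar\'e estimate over $T$ bounds the product of their means by $\|z-A\|_{2,T}^2+\|\zeta-\zeta_h\|_{2,T}^2$ — an element straddling the free boundary $\{|z|=\zeta\}$ necessarily has large $z$-oscillation. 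Collecting the contributions over all $T\in\mathcal{T}_h$ and carrying the constants through this bookkeeping yields the factors $\tfrac12(\sqrt d+1)$ and $\tfrac32$ and hence part~(i); the crux is precisely this cancellation, which keeps the (possibly unbounded) plastic stress $z$ out of the final bound.

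\emph{Error decay rates.} Part~(ii) is then routine: $\|\zeta-\zeta_h\|_{2,\Omega}=\|\zeta-\Pi_h\zeta\|_{2,\Omega}\le c\,h^{\nu}\,|\zeta|_{\nu,2,\Omega}$ by the $L^2$-projection error bound, and by the triangle inequality together with $\|\Pi_h\|_{L^2\to L^2}\le1$, $\|z-\Pi_h\Pi_h^{rt}z\|_{2,\Omega}\le\|z-\Pi_hz\|_{2,\Omega}+\|z-\Pi_h^{rt}z\|_{2,\Omega}\le c\,h^{\nu}\,|z|_{\nu,2,\Omega}$, using in addition the Raviart--Thomas interpolation error estimate (available since $z\in V_{p,q}(\Omega)\cap(W^{\nu,2}(\Omega))^d$). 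Inserting these two estimates into part~(i) gives $\rho_{\textup{tot},h}^2(\Pi_h^{cr}u,\Pi_h^{rt}z)\le c\,h^{2\nu}\,(|z|_{\nu,2,\Omega}^2+|\zeta|_{\nu,2,\Omega}^2)$, which is the claim.
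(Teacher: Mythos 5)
Your admissibility checks, the reduction via Theorem~\ref{thm:discrete_prager_synge_identity} and Lemma~\ref{lem:discrete_primal_dual_gap_estimator}, and part~(ii) coincide with the paper. For the central upper bound, however, you take a genuinely different route: you stay entirely at the discrete level and decompose the Fenchel--Young gap as $(A-P_hA)\cdot(P_hA-B)+\tfrac12|P_hA-B|^2$ with $A=\Pi_h\Pi_h^{rt}z$, $B=\Pi_h\nabla u$, then fight the obtuse-angle term near the free boundary. The paper instead first uses Jensen to pass from $\Pi_h\nabla u$ back to $\nabla u$ (note $\int_T\widetilde z_h\cdot\Pi_h\nabla u\,\mathrm{d}x=\int_T\widetilde z_h\cdot\nabla u\,\mathrm{d}x$ and $\int_\Omega\phi_h(\cdot,\Pi_h\nabla u)\,\mathrm{d}x\le\int_\Omega\phi(\cdot,\nabla u)\,\mathrm{d}x$), substitutes the \emph{continuous} optimality relations \eqref{eq:optimality.1},\eqref{eq:optimality_regular}, and then applies convexity of $\phi^*$ at $\widetilde z_h$. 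This collapses everything into $I_h^1=(D_t\phi^*(\cdot,z)-D_t\phi^*(\cdot,\widetilde z_h),z-\widetilde z_h)_\Omega$, a symmetric Bregman-type term controlled by $\tfrac12(\sqrt d+1)\|z-\widetilde z_h\|_{2,\Omega}^2$ via the uniform Hessian bound \eqref{eq:D2_phi_prime}, plus $I_h^2=\int_\Omega\{\phi_h^*(\cdot,\widetilde z_h)-\phi^*(\cdot,\widetilde z_h)\}\,\mathrm{d}x$, a comparison of the two conjugates at the \emph{same} argument, handled by a case distinction and the orthogonality $\zeta-\zeta_h\perp\mathcal{L}^0(\mathcal{T}_h)$. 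No free-boundary or direction-of-$z$ analysis is needed at all.

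The gap in your proposal concerns part~(i) as stated. Your estimates, even if every sketched step is completed (the "covariance" bound $\langle w_+\rangle_T\langle w_-\rangle_T\le\tfrac12\langle|w-\langle w\rangle_T|^2\rangle_T$ does hold via the variance identity, and the cross term $(|A|-\zeta_h)_+(\zeta-|z|)_+$ admits the elementary pointwise bound $ab\le\tfrac14(a+b)^2$ with $a+b\le|A-z|+|\zeta-\zeta_h|$), cannot reproduce the constants $\tfrac12(\sqrt d+1)$ and $\tfrac32$: your quadratic term already contributes a full $\|z-\Pi_h\Pi_h^{rt}z\|_{2,\Omega}^2$, which for $d=1$ saturates the claimed coefficient $\tfrac12(\sqrt d+1)=1$ before the obtuse-angle term (whose direction part carries an additional factor of order $\zeta/\zeta_h$) contributes anything. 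So what you obtain is $\rho_{\textup{tot},h}^2\le C(\|z-\Pi_h\Pi_h^{rt}z\|_{2,\Omega}^2+\|\zeta-\zeta_h\|_{2,\Omega}^2)$ with an unspecified $C$ --- enough for the decay rate in (ii), but not the identity-plus-explicit-constants statement of (i). Deferring this to "bookkeeping" hides exactly the point where the argument falls short; if you want the sharp constants, the convexity trick $-\phi^*(\cdot,z)\le-\phi^*(\cdot,\widetilde z_h)-D_t\phi^*(\cdot,\widetilde z_h)\cdot(z-\widetilde z_h)$ is the step to adopt.
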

	
	\begin{proof}
		\emph{ad (i)}. First, using \eqref{eq:grad_preservation} and \eqref{eq:trace_preservation}, we observe that
		\begin{align*}
			\begin{aligned}
				\vert \nabla_h \Pi_h^{cr}u\vert&=	\vert \Pi_h \nabla u\vert\leq \Pi_h\vert \nabla u\vert\leq \zeta_h&&\quad\text{ a.e.\ in }\Omega\,,\\
				\pi_h \Pi_h^{cr}u&=\pi_h u=\pi_h u_D=u_D^h&&\quad\text{ a.e.\ on }\Gamma_D\,,
			\end{aligned}
		\end{align*}
		\textit{i.e.}, it holds that $\Pi_h^{cr}u\in K_h^{cr}$.
		Second, 
		using \eqref{eq:div_preservation} and \eqref{eq:normal_trace_preservation}, 
		we observe that
		\begin{align*}
			\begin{aligned}  
				\textup{div}\,\Pi_h^{rt}z&=\Pi_h \textup{div}\,z=\Pi_h(-f)=-f_h&&\quad\text{ a.e.\ in }\Omega\,, \\
				\Pi_h^{rt}z\cdot n&=\pi_h(z\cdot n)=\pi_h g=g_h&&\quad\text{ a.e.\ on }\Gamma_N\,,
			\end{aligned}
		\end{align*}
		\textit{i.e.}, it holds that $\Pi_h^{rt}z\in K_h^{rt,*}$. Then, 
		using Theorem \ref{thm:discrete_prager_synge_identity} together with  Lemma~\ref{lem:discrete_strong_convexity_measures}~as~well~as~\eqref{eq:grad_preservation}, abbreviating $\widetilde{z}_h\coloneqq \Pi_h\Pi_h^{rt}z\in (\mathcal{L}^0(\mathcal{T}_h))^d$, we find that
		\begin{align}\label{thm:apriori_identity.1}
			\rho_{\textup{tot},h}^2(\Pi_h^{cr}u,\Pi_h^{rt}z)&= \int_{\Omega}{\big\{\phi^*_h(\cdot,\widetilde{z}_h)-\widetilde{z}_h\cdot\Pi_h \nabla u+ \phi_h(\cdot,\Pi_h \nabla u)\big\}\,\mathrm{d}x}\,.
		\end{align}
        Due to $\phi_h(\cdot,\Pi_h \nabla u)=\frac{1}{2}\vert \Pi_h \nabla u\vert^2$ a.e.\ in $\Omega$ and $\phi(\cdot, \nabla u)=\frac{1}{2}\vert \nabla u\vert^2$ a.e.\ in $\Omega$, 
		by Jensen's~inequality, it holds that $\int_{\Omega}{\phi_h(\cdot,\Pi_h \nabla u)\,\mathrm{d}x}\leq\int_{\Omega}{\phi(\cdot, \nabla u)\,\mathrm{d}x}$,
        so that from \eqref{thm:apriori_identity.1}, we infer that
        \begin{align}\label{thm:apriori_identity.2}
            \rho_{\textup{tot},h}^2(\Pi_h^{cr}u,\Pi_h^{rt}z)&\leq \int_{\Omega}{\big\{\phi^*_h(\cdot,\widetilde{z}_h)-\widetilde{z}_h\cdot \nabla u+ \phi(\cdot, \nabla u)\big\}\,\mathrm{d}x}\,.
        \end{align}
		Then, using in \eqref{thm:apriori_identity.2} the convex optimality relations \eqref{eq:optimality.1},\eqref{eq:optimality_regular}, that by the convexity~of~${\phi^*\!\in\! C^1(\mathbb{R}^d)}$, it holds that
		\begin{align*}
			- \phi^*(\cdot,z)\leq - \phi^*(\cdot,\widetilde{z}_h)-D_t\phi^*(\cdot,\widetilde{z}_h)\cdot(z-\widetilde{z}_h)\quad\text{ a.e.\ in }\Omega\,,
		\end{align*} 
        we find that
		\begin{align}\label{thm:apriori_identity.3}
            \begin{aligned} 
			\rho_{\textup{tot},h}^2(\Pi_h^{cr}u,\Pi_h^{rt}z)&\leq \int_{\Omega}{\big\{\phi^*_h(\cdot,\widetilde{z}_h)+(z-\widetilde{z}_h)\cdot D_t\phi^*(\cdot,z)- \phi^*(\cdot,z)\big\}\,\mathrm{d}x}
			\\&=(D_t\phi^*(\cdot,z)-D_t\phi^*(\cdot,\widetilde{z}_h),z-\widetilde{z}_h)_{\Omega}
			+\int_{\Omega}{\big\{\phi^*_h(\cdot,\widetilde{z}_h)-\phi^*(\cdot,\widetilde{z}_h)\big\}\,\mathrm{d}x}
            \\&\eqqcolon I_h^1+I_h^2\,.
            \end{aligned}
		\end{align}
    So, it is only left to estimate $I_h^1$ and $I_h^2$:

    \emph{ad $I_h^1$.} Using the Taylor expansion
        \begin{align*}
        D_t\phi^*(\cdot,z)=D_t\phi^*(\cdot,\widetilde{z}_h)+\tfrac{1}{2}D_t^2\phi^*\{z,\widetilde{z}_h\}(z-\widetilde{z}_h)\quad \text{ a.e.\ in }\Omega\,,
        \end{align*}
        and that, by   \eqref{eq:D2_phi_prime}, it holds that $\vert D_t^2\phi^*(x,t) \vert\leq \sqrt{d}+1$ for a.e.\ $x\in \Omega$ and all $t\in \mathbb{R}^d$, 
		we obtain
		\begin{align}\label{thm:apriori_identity.4}
            \begin{aligned} 
			     I_h^1&=\tfrac{1}{2}(D_t^2\phi^*\{z,\widetilde{z}_h\}(z-\widetilde{z}_h),z-\widetilde{z}_h)_{\Omega}
                \\&\leq   \tfrac{1}{2}(\sqrt{d}+1)\,\|z-\widetilde{z}_h\|_{2,\Omega}^2\,.
            \end{aligned}
		\end{align}

    \emph{ad $I_h^2$.} 
		Due to $\phi^*_h(\cdot,\widetilde{z}_h)\hspace{-0.1em}=\hspace{-0.1em}\zeta_h\vert \widetilde{z}_h\vert-\tfrac{1}{2}\zeta_h^2$ a.e.\ in $\{\vert \widetilde{z}_h\vert\hspace{-0.1em}>\hspace{-0.1em} \zeta_h\}$, $\phi^*(\cdot,\widetilde{z}_h)\hspace{-0.1em}=\hspace{-0.1em}\zeta\vert \widetilde{z}_h\vert-\tfrac{1}{2}\zeta^2$ a.e.~in~$\{\vert \widetilde{z}_h\vert\hspace{-0.1em}>\hspace{-0.1em}\zeta\}$, $\phi^*_h(\cdot,\widetilde{z}_h)\hspace{-0.1em}=\hspace{-0.1em}\frac{1}{2}\vert \widetilde{z}_h\vert^2$ a.e.\ in $\{\vert \widetilde{z}_h\vert\hspace{-0.1em}\leq\hspace{-0.1em} \zeta_h\}$, and $\phi^*(\cdot,\widetilde{z}_h)\hspace{-0.1em}=\hspace{-0.1em}\frac{1}{2}\vert \widetilde{z}_h\vert^2$ a.e.\ in $\{\vert\widetilde{z}_h\vert\hspace{-0.1em}\leq\hspace{-0.1em} \zeta\}$, it holds that
		\begin{align}\label{thm:apriori_identity.5}
            \begin{aligned}
			I_h^2&= 
            \tfrac{1}{2}(2\vert \widetilde{z}_h\vert-	\zeta-\zeta_h,\zeta_h-\zeta)_{\{\vert\widetilde{z}_h\vert>\max\{\zeta_h,\zeta\}\}}   
			\\&\quad-\tfrac{1}{2}\|\vert\widetilde{z}_h\vert-\zeta_h\|_{2,\{\zeta_h<\vert\widetilde{z}_h\vert\leq \zeta\}}^2 
			+\tfrac{1}{2}\|\vert\widetilde{z}_h\vert-\zeta\|_{2,\{\zeta<\vert\widetilde{z}_h\vert\leq \zeta_h\}}^2
   \,.
            \end{aligned}
		\end{align}
        Using that $\vert \widetilde{z}_h\vert, \zeta_h\perp \zeta_h-\zeta$ in 
        $L^2(T)$ for all $T\in \mathcal{T}_h$ and that $\{\vert \widetilde{z}_h\vert>\zeta_h\}$ can be decomposed into elements from $\mathcal{T}_h$ as $\vert \widetilde{z}_h\vert,\zeta_h\in \mathcal{L}^0(\mathcal{T}_h)$, we find that
        \begin{align}\label{thm:apriori_identity.6}
            \begin{aligned}
            \tfrac{1}{2}(2\vert \widetilde{z}_h\vert-	\zeta-\zeta_h,\zeta_h-\zeta)_{\{\vert \widetilde{z}_h\vert>\max\{\zeta_h,\zeta\}\}}   &= \tfrac{1}{2}(2\vert \widetilde{z}_h\vert-	\zeta-\zeta_h,\zeta_h-\zeta)_{\{\vert \widetilde{z}_h\vert>\zeta_h\}}
            \\&\quad -\tfrac{1}{2}(2\vert \widetilde{z}_h\vert-	\zeta-\zeta_h,\zeta_h-\zeta)_{\{\zeta_h<\vert  \widetilde{z}_h\vert\leq \zeta\}}   
            \\&\leq\tfrac{1}{2}\|\zeta_h-	\zeta\|_{\{\vert  \widetilde{z}_h\vert>\zeta_h\}}^2
            \\&\quad+\tfrac{1}{2}\|\vert\widetilde{z}_h\vert-	\zeta\|_{2,\{\zeta_h<\vert\widetilde{z}_h\vert\leq \zeta\}}\|\zeta_h-\zeta\|_{2,\{\zeta_h<\vert\widetilde{z}_h\vert\leq \zeta\}}
            \\&\quad+\tfrac{1}{2}\|\vert\widetilde{z}_h\vert-	\zeta_h\|_{2,\{\zeta_h<\vert\widetilde{z}_h\vert\leq \zeta\}}\|\zeta_h-\zeta\|_{2,\{\zeta_h<\vert\widetilde{z}_h\vert\leq \zeta\}}
            \\&\leq \tfrac{3}{2}\|\zeta_h-\zeta\|_{2,\{\vert  \widetilde{z}_h\vert>\zeta_h\}}^2\,,
            \end{aligned}\hspace{-5mm}
        \end{align}
        Then, from \eqref{thm:apriori_identity.6} in \eqref{thm:apriori_identity.5}, we deduce that
        \begin{align}\label{thm:apriori_identity.7}
            I_h^2\leq \tfrac{3}{2}\|\zeta_h-\zeta\|_{2,\Omega}^2\,.
        \end{align}
        Eventually, combining \eqref{thm:apriori_identity.7} and \eqref{thm:apriori_identity.4} in \eqref{thm:apriori_identity.3}, we conclude that the claimed \textit{a priori} error estimate applies.
		
		\emph{ad (ii).} From (i), using Jensen's inequality and the fractional approximation properties of $\Pi_h$ (\textit{cf}.\ \cite[Thm.\ 18.16]{EG21I}) and $\Pi_h^{rt}$ (\emph{cf}.\ \cite[Thms.\ 16.4, 16.6]{EG21I}), we conclude that
		\begin{align*}
			\rho_{\textup{tot},h}^2(\Pi_h^{cr}u,\Pi_h^{rt}z)&\leq  c\,\|z-\Pi_hz\|_{2,\Omega}^2+c\,\|\Pi_h(z-\Pi_h^{rt}z)\|_{2,\Omega}^2+c\,\|\zeta-\Pi_h\zeta\|_{2,\Omega}^2
			\\&\leq  c\,\|z-\Pi_hz\|_{2,\Omega}^2+c\,\|z-\Pi_h^{rt}z\|_{2,\Omega}^2+c\,\|\zeta-\Pi_h\zeta\|_{2,\Omega}^2
			\\&\leq  c\, h^{2\nu}\,\big(\vert z\vert_{\nu,2,\Omega}^2+\vert\zeta\vert_{\nu,2,\Omega}^2\big)\,, 
		\end{align*}
        which is the claimed \textit{a priori} error estimate.
	\end{proof}

		\section{Numerical experiments}\label{sec:experiments}
	
	\hspace{5mm}In this section, we review the theoretical findings of Section \ref{sec:aposteriori} and Section \ref{sec:apriori} via numerical experiments. All experiments were conducted using the finite element software package \textsf{FEniCS}  (version 2019.1.0, \textit{cf}.\  \cite{LW10}). All graphics were generated using the \textsf{Matplotlib}~library~(version~3.5.1,~\textit{cf}.~\cite{Hun07}).\enlargethispage{2.5mm}
 
	\subsection{Implementation details regarding the optimization procedure}
 
	\hspace{5mm}The iterative maximization of the discrete dual energy functional \eqref{eq:discrete_dual} is realized using a semi-implicit discretized $L^2$-gradient flow modified with a residual stopping criterion guaranteeing the necessary accuracy~in~the~optimization~procedure. For the semi-implicit treatment,
    we introduce the auxiliary function $\varphi_h\colon \Omega\times \mathbb{R}_{\ge 0}\to \mathbb{R}$, for a.e. $x\in \Omega$ and every $r\ge 0$ defined by\vspace{-0.5mm}
    \begin{align*}
        \varphi_h(x,r)\coloneqq \begin{cases}
            \frac{1}{2}r^2&\text{ if } r\leq \zeta_h(x)\,,\\
            \zeta_h(x)r-\frac{\zeta_h(x)^2}{2}&\text{ if } r> \zeta_h(x)\,.
        \end{cases}
    \end{align*}

	\begin{algorithm}[Semi-implicit discretized $L^2$-gradient flow]\label{algorithm}
		Let $z^0_h\in \mathcal{R}T^0_N(\mathcal{T}_h)$ with $\textup{div}\,z_h^0=-f_h$ a.e.\ in $\Omega$ and $z_h^0\cdot n =g_h$ a.e.\ on $\Gamma_N$ and let $\tau,\varepsilon_{stop}^h> 0$. 
  Then, for every $k\in \mathbb{N}$:
		\begin{description}[noitemsep,topsep=2pt,labelwidth=\widthof{\textit{(ii)}},leftmargin=!,font=\normalfont\itshape]
			\item[(i)] Compute the iterates $(z_h^k,\smash{\overline{\lambda}}_h^k)^\top\in \mathcal{R}T^0(\mathcal{T}_h)\times\mathcal{L}^0(\mathcal{T}_h)$ with $z_h^k\cdot n =g_h$ a.e.\ on $\Gamma_N$ such that for every $(y_h,\overline{\eta}_h)^\top\in \mathcal{R}T^0_N(\mathcal{T}_h)\times \mathcal{L}^0(\mathcal{T}_h)$, it holds that\vspace{-0.5mm}
			\begin{align}
   \begin{aligned}
       \Big(\Pi_h\mathrm{d}_{\tau}  z_h^k-\tfrac{D_t\varphi_h^*(\cdot,\vert \Pi_h z_h^{k-1}\vert )}{\vert \Pi_h z_h^{k-1}\vert}\Pi_h z_h^k ,\Pi_hy_h \Big)_{\smash{\Omega}}-(\smash{\overline{\lambda}}_h^k,\textup{div}\,y_h)_{\Omega}&=-(y_h\cdot n,u_D^h)_{\Gamma_D}\,,\\
       (\textup{div}\,z_h^k,\overline{\eta}_h)_{\Omega}&=-(f_h,\overline{\eta}_h)_{\Omega}\,.
  \end{aligned}\label{alg:step_1}
			\end{align}
			where $\smash{\mathrm{d}_{\tau} z_h^k\coloneqq \frac{1}{\tau}(z_h^k-z_h^{k-1})\in \mathcal{R}T^0_N(\mathcal{T}_h)}$ denotes the backward difference quotient.
		\item[(ii)] Compute the residual $\smash{r_h^k\in \mathcal{R}T^0_N(\mathcal{T}_h)}$ such that for every $\smash{y_h\in\mathcal{R}T^0_N(\mathcal{T}_h)}$, it holds that\vspace{-0.5mm}
		\begin{align}
			(r_h^k,y_h)_{\Omega}=-\Big(\tfrac{D_t\varphi_h^*(\cdot,\vert \Pi_h z_h^k\vert )}{\vert \Pi_h z_h^k\vert} \Pi_h z_h^k,\Pi_h y_h \Big)_{\smash{\Omega}}-(\smash{\overline{\lambda}}_h^k,\textup{div}\,y_h)_{\Omega}+ (z_h^k\cdot n,u_D^h)_{\Gamma_D}\,.\label{alg:step_2}
		\end{align}
		Stop if $\smash{\|r_h^k\|_{2,\Omega}\leq \varepsilon_{stop}^h}$; otherwise, increase $k\!\to\! k+1$~and~continue~with~step~\textit{(i)}.
		\end{description}
	\end{algorithm}

    The following proposition establishes the  well-posedness (\textit{i.e.}, existence of iterates), stability (\textit{i.e.}, \textit{a priori} bounds), and convergence (\textit{i.e.}, the convergence of the iterates to a discrete dual solution) of  Algorithm \ref{algorithm}.

 \begin{proposition}[well-posedness, stability, and convergence of Algorithm~\ref{algorithm}]\label{prop:stability}
 Let the assumptions of Algorithm~\ref{algorithm} be satisfied. Then, the following statements apply:
 \begin{itemize}[noitemsep,topsep=2pt,leftmargin=!,labelwidth=\widthof{(iii)}]
    \item[(i)] Algorithm \ref{algorithm} is well-posed, \textit{i.e.}, for every $k\hspace{-0.15em}\in\hspace{-0.15em} \mathbb{N}$, given the most-recent iterate ${z_h^{k-1}\hspace{-0.15em}\in\hspace{-0.15em} \smash{\mathcal{R}T^0(\mathcal{T}_h)}}$, there exists a unique iterate $z_h^k\in \smash{\mathcal{R}T^0(\mathcal{T}_h)}$ solving \eqref{alg:step_1}.
  \item[(ii)] Algorithm \ref{algorithm} is unconditionally strongly stable, \textit{i.e.}, for every  $L\in \mathbb{N}$,~it~holds~that\vspace{-0.5mm}
  \begin{align*}
  -D_h^{rt}(z_h^L)+\tau \sum_{k=1}^L{\|\Pi_h \mathrm{d}_{\tau}z_h^k\|_{2,\Omega}^2}\leq -D_h^{rt}(z_h^0)\,.
  \end{align*}

  \item[(iii)] Algorithm \ref{algorithm} terminates after a finite number of steps, \textit{i.e.}, there exists $k^*\in \mathbb{N}$ such~that $\|r_h^{k^*}\|_{2,\Omega}\leq \varepsilon_{stop}^h$.
 \end{itemize}
 \end{proposition}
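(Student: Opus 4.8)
The plan is to analyze Algorithm~\ref{algorithm} by the standard energy method for semi-implicit discretized gradient flows, reducing every assertion to finite-dimensional linear algebra together with the surjectivity of $\textup{div}\colon\mathcal{R}T^0_N(\mathcal{T}_h)\to\mathcal{L}^0(\mathcal{T}_h)$ (shown in the proof of Theorem~\ref{thm:discrete_reconstruction} from $\vert\Gamma_D\vert>0$) and the elementary facts that, for a.e.\ $x\in\Omega$, one has $D_t\varphi_h^*(x,r)=\min\{r,\zeta_h(x)\}$, hence $\tfrac{D_t\varphi_h^*(x,r)}{r}=\min\{1,\zeta_h(x)/r\}\in(0,1]$ and $r\mapsto\tfrac{D_t\varphi_h^*(x,r)}{r}$ is non-increasing on $(0,\infty)$ (with limit $1$ as $r\downarrow 0$). \emph{ad (i).} Since $z_h^k\cdot n=g_h=z_h^0\cdot n$ on $\Gamma_N$, the unknown $z_h^k-z_h^0$ belongs to $\mathcal{R}T^0_N(\mathcal{T}_h)$, so \eqref{alg:step_1} is a square linear saddle-point system for $(z_h^k-z_h^0,\overline{\lambda}_h^k)^\top$, with leading block the symmetric form $(w,y)\mapsto\tfrac1\tau(\Pi_hw,\Pi_hy)_\Omega+\big(\tfrac{D_t\varphi_h^*(\cdot,\vert\Pi_hz_h^{k-1}\vert)}{\vert\Pi_hz_h^{k-1}\vert}\Pi_hw,\Pi_hy\big)_\Omega$ and coupling $(\cdot,\textup{div}(\cdot))_\Omega$. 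By finite-dimensionality it suffices to show the homogeneous system is trivial: its second equation forces the homogeneous velocity to be divergence-free, hence element-wise constant (a divergence-free lowest-order Raviart--Thomas field is element-wise constant), on which $\Pi_h$ acts as the identity; testing the first equation with this velocity (the multiplier term dropping by divergence-freeness) gives $\tfrac1\tau\|\cdot\|_{2,\Omega}^2\le 0$, so the velocity vanishes, and then the first equation collapses to $(\overline{\lambda}_h,\textup{div}\,y_h)_\Omega=0$ for all $y_h\in\mathcal{R}T^0_N(\mathcal{T}_h)$, whence $\overline{\lambda}_h=0$ by surjectivity of $\textup{div}$. This holds for every $\tau>0$.

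\emph{ad (ii).} I would test \eqref{alg:step_1} with the admissible choice $y_h=\mathrm{d}_\tau z_h^k\in\mathcal{R}T^0_N(\mathcal{T}_h)$; by the second line of \eqref{alg:step_1} and $\textup{div}\,z_h^0=-f_h$ one has $\textup{div}\,\mathrm{d}_\tau z_h^k=\tfrac1\tau(\textup{div}\,z_h^k-\textup{div}\,z_h^{k-1})=0$, so the multiplier term drops, leaving an identity that expresses $(\mathrm{d}_\tau z_h^k\cdot n,u_D^h)_{\Gamma_D}$ through $\|\Pi_h\mathrm{d}_\tau z_h^k\|_{2,\Omega}^2$ and $\big(\tfrac{D_t\varphi_h^*(\cdot,\vert\Pi_hz_h^{k-1}\vert)}{\vert\Pi_hz_h^{k-1}\vert}\Pi_hz_h^k,\Pi_h\mathrm{d}_\tau z_h^k\big)_\Omega$. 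Independently, writing $a\coloneqq\Pi_hz_h^k(x)$, $b\coloneqq\Pi_hz_h^{k-1}(x)$, convexity of $r\mapsto\varphi_h^*(x,r)$, monotonicity of $r\mapsto\tfrac{D_t\varphi_h^*(x,r)}{r}$, and $\vert a\vert(\vert a\vert-\vert b\vert)\le a\cdot(a-b)$ give the decisive pointwise estimate
\begin{align*}
\phi^*_h(x,\Pi_hz_h^k)-\phi^*_h(x,\Pi_hz_h^{k-1})\le\tfrac{D_t\varphi_h^*(x,\vert\Pi_hz_h^{k-1}\vert)}{\vert\Pi_hz_h^{k-1}\vert}\,\Pi_hz_h^k\cdot(\Pi_hz_h^k-\Pi_hz_h^{k-1})\quad\text{a.e.\ in }\Omega\,.
\end{align*}
Integrating this, telescoping the boundary contribution via $(z_h^k\cdot n-z_h^{k-1}\cdot n,u_D^h)_{\Gamma_D}=\tau(\mathrm{d}_\tau z_h^k\cdot n,u_D^h)_{\Gamma_D}$, invoking $-D_h^{rt}(z_h^k)=\int_\Omega\phi^*_h(\cdot,\Pi_hz_h^k)\,\mathrm{d}x-(z_h^k\cdot n,u_D^h)_{\Gamma_D}$ (every iterate lies in $K_h^{rt,*}$ by construction), and then substituting the test identity collapses the right-hand side to $-\tau\|\Pi_h\mathrm{d}_\tau z_h^k\|_{2,\Omega}^2$, i.e.\ $-D_h^{rt}(z_h^k)+\tau\|\Pi_h\mathrm{d}_\tau z_h^k\|_{2,\Omega}^2\le-D_h^{rt}(z_h^{k-1})$. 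Summation over $k=1,\dots,L$ gives the claim.

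\emph{ad (iii).} From (ii) and the existence of a discrete dual maximizer $z_h^{rt}$ (Theorem~\ref{thm:discrete_duality}(ii)), one gets $\tau\sum_{k\ge1}\|\Pi_h\mathrm{d}_\tau z_h^k\|_{2,\Omega}^2\le-D_h^{rt}(z_h^0)+D_h^{rt}(z_h^{rt})<\infty$, hence $\|\Pi_h\mathrm{d}_\tau z_h^k\|_{2,\Omega}\to0$ as $k\to\infty$. Subtracting \eqref{alg:step_1} from \eqref{alg:step_2}, both tested over $\mathcal{R}T^0_N(\mathcal{T}_h)$, cancels the multiplier and boundary contributions and yields $(r_h^k,y_h)_\Omega=(\Pi_h\mathrm{d}_\tau z_h^k,\Pi_hy_h)_\Omega+\big(\big(\tfrac{D_t\varphi_h^*(\cdot,\vert\Pi_hz_h^{k-1}\vert)}{\vert\Pi_hz_h^{k-1}\vert}-\tfrac{D_t\varphi_h^*(\cdot,\vert\Pi_hz_h^{k}\vert)}{\vert\Pi_hz_h^{k}\vert}\big)\Pi_hz_h^k,\Pi_hy_h\big)_\Omega$ for all $y_h\in\mathcal{R}T^0_N(\mathcal{T}_h)$; a short case distinction on the positions of $\vert\Pi_hz_h^{k-1}\vert$ and $\vert\Pi_hz_h^{k}\vert$ relative to $\zeta_h$ shows the coefficient-freezing error is pointwise bounded by $\vert\Pi_hz_h^{k}-\Pi_hz_h^{k-1}\vert=\tau\vert\Pi_h\mathrm{d}_\tau z_h^k\vert$. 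Testing with $y_h=r_h^k$ and using $\|\Pi_hr_h^k\|_{2,\Omega}\le\|r_h^k\|_{2,\Omega}$ then gives $\|r_h^k\|_{2,\Omega}\le(1+\tau)\|\Pi_h\mathrm{d}_\tau z_h^k\|_{2,\Omega}\to0$, so the stopping test $\|r_h^{k^*}\|_{2,\Omega}\le\varepsilon_{stop}^h$ is met for some $k^*\in\mathbb{N}$.

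The step I expect to be the main obstacle is the semi-implicit convexity estimate displayed in (ii) together with its companion bound on the coefficient-freezing error used in (iii): the former is false for an arbitrary frozen scalar and hinges essentially on the monotonicity of $r\mapsto\tfrac{D_t\varphi_h^*(x,r)}{r}$, while the latter must be obtained \emph{without} any a priori bound on $\|\Pi_hz_h^k\|_{2,\Omega}$ --- mere Lipschitz continuity of the coefficient would not suffice, since it multiplies the possibly large field $\Pi_hz_h^k$, and the required cancellation is exactly what the case distinction exhibits.
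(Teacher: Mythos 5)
Your proposal is correct; parts (i) and (ii) take essentially the paper's route, while part (iii) follows a genuinely different and arguably stronger line of argument.
In (i), the paper invokes the Brezzi splitting theorem (\emph{cf}.\ \cite[Thm.~6.4]{Bartels16}); you unpack the argument it encapsulates: divergence-free lowest-order Raviart--Thomas fields are element-wise constant, so $\Pi_h$ acts as the identity on the kernel of the divergence constraint, making the leading block coercive there, and the multiplier is uniquely determined via surjectivity of $\textup{div}\colon\mathcal{R}T^0_N(\mathcal{T}_h)\to\mathcal{L}^0(\mathcal{T}_h)$ (which follows from $\vert\Gamma_D\vert>0$). In (ii), both proofs test \eqref{alg:step_1} with $y_h=\mathrm{d}_\tau z_h^k$, use $\textup{div}\,\mathrm{d}_\tau z_h^k=0$ to drop the multiplier, and control the semi-implicitly frozen nonlinear term by the pointwise estimate of Lemma~\ref{lem:stability}. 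You re-derive the (remainder-free) version from convexity and the fact that $r\mapsto D_t\varphi_h^*(x,r)/r$ is non-increasing, rather than citing \cite[Appx.~A.2]{Bar21}; note that non-increasing is indeed the correct monotonicity here (the ``non-decreasing'' in the paper's proof of Lemma~\ref{lem:stability} is a typo that your derivation implicitly corrects).

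Part (iii) is where you diverge. The paper argues by compactness: boundedness of $(z_h^k)_{k\in\mathbb{N}}$ and of $(\smash{\overline{\lambda}}_h^k)_{k\in\mathbb{N}}$ (via the discrete inf-sup condition, \emph{cf}.\ \cite[Prop.~7.4]{Bartels16}), Bolzano--Weierstrass, passage to the limit in \eqref{alg:step_1}, identification of the limit as a discrete dual solution, and finally a subsequence/convergence-principle step to conclude $r_h^k\to 0$. You instead eliminate $\smash{\overline{\lambda}}_h^k$ by combining \eqref{alg:step_1} and \eqref{alg:step_2} over $\mathcal{R}T^0_N(\mathcal{T}_h)$, arriving at
\begin{align*}
(r_h^k,y_h)_{\Omega}=-(\Pi_h\mathrm{d}_\tau z_h^k,\Pi_hy_h)_{\Omega}+\big(\big(\rho(\vert\Pi_hz_h^{k-1}\vert)-\rho(\vert\Pi_hz_h^{k}\vert)\big)\,\Pi_hz_h^k,\Pi_hy_h\big)_{\Omega}\quad\text{for all }y_h\in\mathcal{R}T^0_N(\mathcal{T}_h)\,,
\end{align*}
where $\rho(t)\coloneqq D_t\varphi_h^*(\cdot,t)/t=\min\{1,\zeta_h/t\}$, and then establish by the case distinction you indicate (positions of $\vert\Pi_hz_h^{k-1}\vert$, $\vert\Pi_hz_h^{k}\vert$ relative to $\zeta_h$) the pointwise bound $\vert(\rho(\vert a\vert)-\rho(\vert b\vert))\,b\vert\le\vert a-b\vert$ for all $a,b\in\mathbb{R}^d$; this bound is correct precisely because the possibly large factor $\Pi_hz_h^k$ is tamed by the $1/\vert\cdot\vert$ structure of $\rho$. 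Testing with $y_h=r_h^k$ and using that the local $L^2$-projection is a contraction then gives the explicit estimate $\|r_h^k\|_{2,\Omega}\le(1+\tau)\|\Pi_h\mathrm{d}_\tau z_h^k\|_{2,\Omega}\to 0$. Your argument is shorter, yields a quantitative residual decay rate rather than mere convergence, and sidesteps the need to establish boundedness of the iterates and of the multiplier sequence altogether.
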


 The proof of Proposition \ref{prop:stability}(ii) is essentially based on the following inequality.

 \begin{lemma}\label{lem:stability}
  For a.e.\ $x\in \Omega$ and every $a,b\in \mathbb{R}^d$, it holds that
  \begin{align*}
  \smash{\tfrac{D_t\varphi_h^*(x,\vert a\vert)}{\vert a\vert } b\cdot(b-a)\ge \varphi_h^*(x,\vert b\vert)-\varphi_h^*(x,\vert a\vert)+\tfrac{1}{2}\tfrac{D_t\varphi_h^*(\cdot,\vert a\vert)}{\vert a\vert}\vert b-a\vert^2\,.}
  \end{align*}
 \end{lemma}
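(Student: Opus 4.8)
The plan is to strip the vector inequality down to a one‑dimensional statement about the radial profile of $\phi_h^*$, and then recognise that statement as a concavity inequality after the substitution $s=r^2$.

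First I would fix $x\in\Omega$, abbreviate $\zeta\coloneqq\zeta_h(x)>0$, write $\psi(r)\coloneqq\varphi_h^*(x,r)$ for $r\ge 0$, and set $c\coloneqq\tfrac{D_t\varphi_h^*(x,\vert a\vert)}{\vert a\vert}$. Using $b\cdot(b-a)=\vert b\vert^2-a\cdot b$ and $\vert b-a\vert^2=\vert b\vert^2-2\,a\cdot b+\vert a\vert^2$, the cross terms $c\,(a\cdot b)$ on the two sides cancel, so the claimed inequality is equivalent to
\begin{align*}
\tfrac{1}{2}\,c\,\big(\vert b\vert^2-\vert a\vert^2\big)\ge \psi(\vert b\vert)-\psi(\vert a\vert)\,.
\end{align*}
Since $c\,\vert a\vert=D_t\varphi_h^*(x,\vert a\vert)=\psi'(\vert a\vert)$, the left-hand side depends on $a,b$ only through $r_a\coloneqq\vert a\vert$ and $r_b\coloneqq\vert b\vert$, so it suffices to prove $\tfrac12\,\tfrac{\psi'(r_a)}{r_a}\,(r_b^2-r_a^2)\ge\psi(r_b)-\psi(r_a)$ for all $r_a,r_b\ge 0$.

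The key step is the reparametrisation $s=r^2$: setting $g(s)\coloneqq\psi(\sqrt{s})$ for $s\ge 0$, one has $g(r^2)=\psi(r)$ and $g'(s)=\tfrac{\psi'(\sqrt s)}{2\sqrt s}$, hence $g'(r_a^2)=\tfrac12\,\tfrac{\psi'(r_a)}{r_a}$. Therefore the scalar inequality above is exactly $g'(r_a^2)\,(r_b^2-r_a^2)\ge g(r_b^2)-g(r_a^2)$, i.e.\ the assertion that the tangent line to $g$ at $r_a^2$ lies above the graph of $g$; so the whole lemma reduces to concavity of $g$ on $[0,\infty)$.

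It then remains to check that $g$ is concave. From the definition of $\varphi_h^*$, one gets $g(s)=\tfrac12 s$ on $[0,\zeta^2]$ (affine, hence concave) and $g(s)=\zeta\sqrt s-\tfrac{\zeta^2}{2}$ on $[\zeta^2,\infty)$ with $g''(s)=-\tfrac{\zeta}{4}s^{-3/2}<0$ (strictly concave there); moreover both one-sided derivatives at $s=\zeta^2$ equal $\tfrac12$, so $g\in C^1([0,\infty))$ with a non-increasing derivative, which gives concavity on all of $[0,\infty)$. The only subtlety is the degenerate case $a=0$, where $\tfrac{D_t\varphi_h^*(x,\vert a\vert)}{\vert a\vert}$ is interpreted as its limit value $1$ (admissible since $\zeta>0$), equivalently $g'(0)=\tfrac12$, and the concavity inequality still holds at $s_a=0$. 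I do not expect a genuine obstacle here: the entire content is the observation that $r\mapsto\varphi_h^*(x,r)$ turns concave once written as a function of $s=r^2$, together with the $C^1$-gluing at $r=\zeta_h(x)$.
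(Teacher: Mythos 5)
Your proof is correct and self-contained; it effectively reconstructs the argument that the paper delegates entirely to the citation of \cite[Appx.\ A.2]{Bar21}. The reduction you carry out (cancel the cross terms $c\,a\cdot b$, pass to the radial scalar inequality $\tfrac12\,\tfrac{\psi'(r_a)}{r_a}\,(r_b^2-r_a^2)\ge\psi(r_b)-\psi(r_a)$, recognise it as the tangent-above-graph criterion for concavity of $g(s)\coloneqq\psi(\sqrt{s})$, and verify concavity from the explicit piecewise form of $\varphi_h^*$ together with the $C^1$-gluing at $s=\zeta_h(x)^2$) is precisely the content of the lemma the paper cites. The paper, by contrast, only records that $\varphi_h^*(x,\cdot)\in C^1(\mathbb{R}_{\ge 0})$ and that $t\mapsto D_t\varphi_h^*(x,t)/t$ is positive and ``non-decreasing'' and refers to \cite{Bar21}. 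Your computation actually shows this quotient equals $1$ on $(0,\zeta_h(x)]$ and $\zeta_h(x)/t$ on $(\zeta_h(x),\infty)$, so it is \emph{non-increasing}, and it is exactly this non-increasingness (equivalently, concavity of $g$) that yields the asserted ``$\ge$''; a non-decreasing quotient would make $g$ convex and reverse the inequality. Thus your direct argument not only verifies the lemma but also exposes a small misstatement in the paper's one-line proof. What the citation buys is brevity; what your argument buys is a self-contained derivation and a sanity check on the sign of the required monotonicity.
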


 \begin{proof}
    Follows from \cite[Appx.\ A.2]{Bar21}, since $\varphi_h^*(x,\cdot)\in  C^1(\mathbb{R}_{\ge 0})$ and $(t\mapsto D_t\varphi_h^*(x,t)/t)\in  C^0(\mathbb{R}_{\ge 0})$ is positive and non-decreasing for a.e.\ $x\in \Omega$.
 \end{proof}

 \begin{proof}[Proof (of Proposition \ref{prop:stability}).]
 \textit{ad (i).} Since  $D_t\varphi_h^*(x,t)/t\ge  0$ for a.e.\ $x\in \Omega$ and all  $t\ge 0$,~the~well-posedness of Algorithm \ref{algorithm} is a direct consequence of the Brezzi splitting~theorem~(\textit{cf}.~\mbox{\cite[Thm.~6.4]{Bartels16}}).\newpage
 
 \textit{ad (ii).}
 Let $L\hspace{-0.1em}\in\hspace{-0.1em} \mathbb{N}$ be arbitrary. Then,
  for every $k\hspace{-0.1em}\in\hspace{-0.1em} \{1,\dots,L\}$, choosing ${y_h\hspace{-0.1em}=\hspace{-0.1em}\mathrm{d}_{\tau} z_h^k\hspace{-0.1em}\in\hspace{-0.1em} \mathcal{R}T^0_N(\mathcal{T}_h)}$ in \eqref{alg:step_1}, due to
  $\textup{div}\,(\mathrm{d}_{\tau} z_h^k)=0$ a.e.\ in $\Omega$, we find that\vspace{-0.25mm}
  \begin{align}\label{prop:stability.1}
  \|\Pi_h \mathrm{d}_{\tau} z_h^k\|_{2,\Omega}^2+\Big(\tfrac{D_t\varphi_h^*(\cdot,\vert \Pi_h z_h^{k-1}\vert )}{\vert \Pi_h z_h^{k-1}\vert}\Pi_h z_h^k,\Pi_h \mathrm{d}_{\tau} z_h^k\Big)_{\smash{\Omega}}+(\mathrm{d}_{\tau} z_h^k\cdot n,u_D^h)_{\Gamma_D}=0\,.
  \end{align}
  According to Lemma \ref{lem:stability} with $a=\Pi_h z_h^{k-1}|_T\in \mathbb{R}^d$ and $b=\Pi_h z_h^k|_T\in \mathbb{R}^d$ applied for all $T\in \mathcal{T}_h$, for every $k\in \{1,\dots,L\}$, we have that\vspace{-0.25mm}
  \begin{align}\label{prop:stability.2}
    \tfrac{D_t\varphi_h^*(\cdot,\vert \Pi_h z_h^{k-1}\vert )}{\vert \Pi_h z_h^{k-1}\vert}\Pi_h z_h^k\cdot\Pi_h \mathrm{d}_{\tau} z_h^k\ge \mathrm{d}_{\tau}[\phi_h^*(\cdot,\Pi_h z_h^k )]\quad\text{ a.e.\ in }\Omega\,.
  \end{align}
  Using \eqref{prop:stability.2} in \eqref{prop:stability.1}, for every $k\in \{1,\dots,L\}$, 
  we arrive at\vspace{-0.25mm}
  \begin{align}\label{prop:stability.4}
  \smash{\|\Pi_h \mathrm{d}_{\tau}z_h^k\|_{2,\Omega}^2-\mathrm{d}_{\tau}[D_h^{rt}(z_h^k)]\leq 0\,.}
  \end{align}
  Summation of \eqref{prop:stability.4} with respect to $k\hspace{-0.1em}\in\hspace{-0.1em}\{1,\dots,L\}$, using $\smash{\sum_{k=1}^L{\hspace{-0.1em}\mathrm{d}_{\tau} [D_h^{rt}(z_h^k)]}\hspace{-0.1em}=\hspace{-0.1em}D_h^{rt}(z_h^L)\hspace{-0.1em}-\hspace{-0.1em}D_h^{rt}(z_h^0)}$, yields the claimed stability estimate.

 \textit{ad (iii).} Due to (ii), we have that $\smash{\|\Pi_h \mathrm{d}_{\tau} z_h^k\|_{2,\Omega}^2\to 0}$ $(k\to \infty)$. Thus, by $\smash{\textup{div}\,(\mathrm{d}_{\tau}z_h^k)=0}$~a.e.~in~$\Omega$, the finite-dimensionality of $\smash{\mathcal{R}T^0_N(\mathcal{T}_h)}$, and the equivalence of norms, it holds that\vspace{-0.25mm}
 \begin{align}\label{prop:stability.5}
  \smash{z_h^k-z_h^{k-1}}\to 0\quad \text{ in }\mathcal{R}T^0_N(\mathcal{T}_h)\quad (k\to \infty)\,.
 \end{align}
 In addition, due to (ii), we have that $D_h^{rt}(z_h^k)\geq D_h^{rt}(z_h^0)$ and, thus,
 $(z_h^k)_{k\in \mathbb{N}}\subseteq \mathcal{R}T^0(\mathcal{T}_h)$~is~bounded. In addition, due to \cite[Prop.\ 7.4]{Bartels16} together with $\vert \Gamma_D\vert>0$, for every $k\in \mathbb{N}$, we have that\vspace{-0.25mm}
 \begin{align*}
     \|\smash{\overline{\lambda}}_h^k\|_{2,\Omega}&\leq \sup_{y_h\in \mathcal{R}T^0_N(\mathcal{T}_h)\,:\,\|y_h\|_{H(\textup{div};\Omega)\leq 1}}{\smash{\big\{(\smash{\overline{\lambda}}_h^k,\textup{div}\,y_h)_{\Omega}\big\}}}
     \\[-1mm]&=\sup_{y_h\in \mathcal{R}T^0_N(\mathcal{T}_h)\,:\,\|y_h\|_{H(\textup{div};\Omega)\leq 1}}{\Big\{
      \Big(\Pi_h \mathrm{d}_{\tau} z_h^k-\tfrac{D_t\varphi_h^*(\cdot,\vert \Pi_h z_h^{k-1}\vert )}{\vert \Pi_h z_h^{k-1}\vert}\Pi_h z_h^k ,\Pi_hy_h \Big)_{\smash{\Omega}}+(y_h\cdot n,u_D^h)_{\Gamma_D}\Big\}}
      \\[-0.5mm]&\leq \|\Pi_h \mathrm{d}_{\tau} z_h^k\|_{2,\Omega}+\|D_t\varphi_h^*(\cdot,\vert \Pi_h z_h^k\vert)\|_{2,\Omega}+c_h\,\|u_D^h\|_{2,\Gamma_D}\,,
 \end{align*}
 implying \hspace{-0.15mm}that \hspace{-0.15mm}$(\smash{\overline{\lambda}}_h^k)_{k\in \mathbb{N}}\!\subseteq \!\mathcal{L}^0(\mathcal{T}_h)$ \hspace{-0.15mm}is \hspace{-0.15mm}bounded.
 \hspace{-0.5mm}Due \hspace{-0.15mm}to \hspace{-0.15mm}the \hspace{-0.15mm}finite-dimensionality \hspace{-0.15mm}of \hspace{-0.15mm}$\mathcal{R}T^0(\mathcal{T}_h)$~\hspace{-0.15mm}and~\hspace{-0.15mm}$\mathcal{L}^0(\mathcal{T}_h)$, the Bolzano--Weierstraß theorem yields a subsequences $(z_h^{k_\ell})_{\ell\in \mathbb{N}}\subseteq \mathcal{R}T^0(\mathcal{T}_h)$, $(\smash{\overline{\lambda}}_h^{k_\ell})_{\ell\in \mathbb{N}}\subseteq \mathcal{L}^0(\mathcal{T}_h)$ as well as limits $\widetilde{z}_h\in \mathcal{R}T^0(\mathcal{T}_h)$, $\widetilde{\lambda}_h\in \mathcal{L}^0(\mathcal{T}_h)$ such that\vspace{-0.25mm}
 \begin{alignat}{3}
       \smash{z_h^{k_\ell}}&\to \widetilde{z}_h&&\quad\text{ in }\mathcal{R}T^0(\mathcal{T}_h)&&\quad (\ell\to \infty)\,,\label{prop:stability.6} \\[-0.5mm]
       \smash{\smash{\overline{\lambda}}_h^{k_\ell}}&\to \widetilde{\lambda}_h&&\quad\text{ in }\mathcal{L}^0(\mathcal{T}_h)&&\quad (\ell\to \infty)\,. \label{prop:stability.6.2} 
 \end{alignat}
 Due to $\smash{\textup{div}\,z_h^k=-f_h}$ a.e.\ in $\Omega$ and $\smash{z_h^k\cdot n=g_h}$ a.e.\ on $\Gamma_N$ for all $k\in \mathbb{N}$, \eqref{prop:stability.6} implies that\vspace{-0.25mm}
 \begin{alignat}{2}
     \textup{div}\, \widetilde{z}_h&=-f_h&&\quad \text{ a.e.\ in }\Omega\,,\label{prop:stability.6.3} \\
     \widetilde{z}_h\cdot n&=g_h&&\quad\textup{ a.e.\ on }\Gamma_N\,.\label{prop:stability.6.4} 
 \end{alignat} 
 In addition, due to \eqref{prop:stability.5}, \eqref{prop:stability.6} implies that\vspace{-0.25mm}\enlargethispage{11.5mm}
 \begin{align}\label{prop:stability.7}
  \smash{z_h^{k_\ell-1}}\to \widetilde{z}_h\quad\text{ in }\mathcal{R}T^0_N(\mathcal{T}_h)\quad (\ell\to \infty)\,.
 \end{align}
 Thus, using \eqref{prop:stability.6}--\eqref{prop:stability.7}, by passing for $\ell\to \infty$ in \eqref{alg:step_1}, for every $y_h\in \mathcal{R}T^0_N(\mathcal{T}_h)$, we obtain\vspace{-0.25mm}
 \begin{align}\label{prop:stability.8}
  \Big(\tfrac{D_t\varphi_h^*(\cdot,\vert \Pi_h \widetilde{z}_h\vert )}{\vert \Pi_h \widetilde{z}_h\vert}\Pi_h \widetilde{z}_h ,\Pi_h y_h \Big)_{\smash{\Omega}}+(\widetilde{\lambda}_h,\textup{div}\,y_h)_{\Omega}+ (\widetilde{z}_h\cdot n,u_D^h)_{\Gamma_D}=0\,,
 \end{align}
 which together with \eqref{prop:stability.6.3} and \eqref{prop:stability.6.4} proves that $\smash{\widetilde{z}_h\in \mathcal{R}T^0(\mathcal{T}_h)}$ is a discrete dual solution and $\smash{\widetilde{\lambda}_h=\Pi_h u_h^{cr}}$ a.e.\ in $\Omega$.
 Hence, using \eqref{prop:stability.5} and \eqref{prop:stability.8},  for every $y_h\in \mathcal{R}T^0_N(\mathcal{T}_h)$,~we~obtain\vspace{-0.5mm}
 \begin{align*}
(r_h^{k_\ell},y_h)_{\smash{\Omega}}&=\Big(\tfrac{D_t\varphi_h(\cdot,\vert \Pi_h z_h^{k_\ell}\vert )}{\vert \Pi_h z_h^{k_\ell}\vert}\Pi_h z_h^{k_\ell},\Pi_h y_h \Big)_{\smash{\Omega}}+(\smash{\overline{\lambda}}_h^{k_\ell},\textup{div}\,y_h)_{\Omega}+ (y_h\cdot n,u_D^h)_{\Gamma_D}
  \\[-0.5mm]&\to \Big(\tfrac{D_t\varphi_h(\cdot,\vert \Pi_h \widetilde{z}_h\vert )}{\vert \Pi_h \widetilde{z}_h\vert}\Pi_h \widetilde{z}_h ,\Pi_h y_h \Big)_{\smash{\Omega}}+(\widetilde{\lambda}_h,\textup{div}\,y_h)_{\Omega}+ (y_h\cdot n,u_D^h)_{\Gamma_D}=0\quad(\ell\to \infty)\,,
 \end{align*}
 \textit{i.e.}, $\smash{r_h^{k_\ell}}\rightharpoonup 0$ in $\mathcal{R}T^0_N(\mathcal{T}_h)$ $(\ell\to \infty)$, and, thus, by the finite-dimensionality of $\mathcal{R}T^0_N(\mathcal{T}_h)$,  $\smash{r_h^{k_\ell}}\to 0$ in $\mathcal{R}T^0_N(\mathcal{T}_h)$ $(\ell\to \infty)$, implying that $\smash{r_h^{k_\ell}}\to 0$ in $(L^2(\Omega))^d$ $(\ell\to \infty)$.~As~this~\mbox{argumentation}~\mbox{remains} valid for each subsequence of $(r_h^k)_{k\in \mathbb{N}}\subseteq \mathcal{R}T^0_N(\mathcal{T}_h)$, the standard convergence principle yields that $r_h^k\to 0$ in $(L^2(\Omega))^d$ $(k\to \infty)$. In particular, there exists $k^*\in \mathbb{N}$  such that $\|r_h^{k^*}\|_{2,\Omega}\leq \varepsilon^h_{\textit{stop}}$.
 \end{proof} 

    \subsection{Experimental setup}\label{subsec:exp_setup}\enlargethispage{5mm}

    \hspace{5mm}For \hspace{-0.1mm}our \hspace{-0.1mm}numerical \hspace{-0.1mm}experiments, \hspace{-0.1mm}we \hspace{-0.1mm}consider \hspace{-0.1mm}the \hspace{-0.1mm}manufactured \hspace{-0.1mm}solution \hspace{-0.1mm}in  \hspace{-0.1mm}\cite[Expl.~\hspace{-0.1mm}2.3.2,~\hspace{-0.1mm}p.~\hspace{-0.1mm}122]{GLT1981}.
	More precisely, let $\Omega\hspace{-0.1em}\coloneqq \hspace{-0.1em}B_r^2(0)\hspace{-0.1em}\coloneqq \hspace{-0.1em}\{ x\hspace{-0.1em}\in\hspace{-0.1em} \mathbb{R}^2\mid \vert x\vert \hspace{-0.1em}<\hspace{-0.1em}r\}$, $r\hspace{-0.1em}>\hspace{-0.1em}0$,  $	\Gamma_D\hspace{-0.1em}\coloneqq\hspace{-0.1em}\partial\Omega$
	(\textit{i.e.}, $\Gamma_N\hspace{-0.1em}\coloneqq\hspace{-0.1em}\emptyset$),~$ {f\hspace{-0.1em}\equiv\hspace{-0.1em} C\hspace{-0.1em}\in\hspace{-0.1em} L^1(\Omega)}$, $u_D\equiv 0\in W^{1,\infty}(\Gamma_D)$, and $\zeta\equiv 1\in L^{\infty}(\Omega)$. Then, the primal solution $u\in K $ and the dual solution $z\in K^*$, for every $x\in \Omega$, are defined by
	\begin{align*}
		u(x)&\coloneqq \left.\begin{cases}
            \frac{C}{4}(r^2-\vert x\vert^2)&\text{ if }C\leq \frac{2}{r}\,,\\
        \left.\begin{cases}
		    r-\vert x\vert&\text{ if } \frac{2}{C}\leq \vert x\vert \leq r\,,\\
            -\frac{C}{4}\vert x\vert^2+r-\frac{1}{C}&\text{ if } 0\leq \vert x\vert \leq \frac{2}{C}\,,
		\end{cases}\right\}&\text{ if } C\ge \frac{2}{r}\,,
		\end{cases}\right\}\,,\\
         z(x)&\coloneqq -\tfrac{C}{2}x\,.
	\end{align*}

    In what follows, we set $r\hspace{-0.1em}=\hspace{-0.1em}1$ (\textit{i.e.}, $\Omega\hspace{-0.1em}=\hspace{-0.1em}B_1^2(0)$). Then,
	using the~\texttt{MatLab}~(version~R2024a,~\textit{cf}.~\cite{matlab}) library~\texttt{DistMesh} (version 1.1, \textit{cf}.\  \cite{distmesh}), we generate approximative triangulations
    $\mathcal
	T_{h_i}$, $i=0,\ldots,6$, where $h_i\hspace{-0.1em}\approx\hspace{-0.1em} 0.36\hspace{-0.1em}\times \hspace{-0.1em}\frac{1}{2^{i}}$ for all $i\hspace{-0.1em}=\hspace{-0.1em}0,\ldots,6$, such that $\Omega_{h_i}\hspace{-0.1em}\subseteq\hspace{-0.1em} \Omega$, where $\Omega_{h_i}\hspace{-0.1em}\coloneqq \hspace{-0.1em}\textup{int}(\cup\mathcal{T}_{h_i})$~for~all~$i\hspace{-0.1em}=\hspace{-0.1em}0,\ldots,6$. As approximations of the Dirichlet boundary data serve $u_D^{h_i}\coloneqq \pi_{h_i} u\in \mathcal{L}^0(\mathcal{S}_{h_i}^{\partial\Omega_{h_i}})$,  $i=0,\ldots,6$.
	
	For this series of triangulations $\mathcal{T}_{h_i}$, $i=0,\ldots,6$, we apply 
    the semi-implicit discretized~\mbox{$L^2$-grad-} ient flow (\textit{cf}.\ Algorithm \ref{algorithm}) with step-size $\tau=1.0$, stopping parameter $\smash{\varepsilon^h_{stop}}=1.0\times 10^{-4}$, and initial iterate $(z_{h_i}^0,\smash{\overline{\lambda}}^0_{h_i})^\top\hspace{-0.1em}\in\hspace{-0.1em} \mathcal{R}T^0_N(\mathcal{T}_{h_i})\times \mathcal{L}^0(\mathcal{T}_{h_i})$ such that for every ${(y_{h_i},\overline{\eta}_{h_i})^\top\hspace{-0.1em}\in \hspace{-0.1em}\mathcal{R}T^0_N(\mathcal{T}_{h_i})\times \mathcal{L}^0(\mathcal{T}_{h_i})}$, it holds that
    \begin{align*}
        (\Pi_{h_i}z_{h_i}^0,\Pi_{h_i} y_{h_i})_{\Omega_{h_i}}-(\smash{\overline{\lambda}}^0_{h_i},\textup{div}\,y_{h_i})_{\Omega_{h_i}}&=(y_{h_i}\cdot n,u_D^{h_i})_{\partial\Omega_{h_i}}\,,\\
        (\textup{div}\,z_{h_i}^0,\overline{\eta}_{h_i})_{\Omega_{h_i}}&=-(f_{h_i},\overline{\eta}_{h_i})_{\Omega_{h_i}}\,,
    \end{align*}
	to approximate the discrete dual solution $z_{h_i}^{rt}\in  K_{h_i}^{rt,*}$, $i=0,\ldots,6$, the discrete Lagrange~\mbox{multiplier} $\smash{\overline{\lambda}}_{h_i}^0\in \mathcal{L}^0(\mathcal{T}_{h_i})$, $i=0,\ldots,6$, and, subsequently, using inverse generalized Marini~formula~(\textit{cf}.~\eqref{eq:gen_marini}), the discrete primal solution $ u_{h_i}^{cr}\in K_{h_i}^{cr}$, $i=0,\ldots,6$.

    For determining the convergence rates,  
	the experimental order of convergence~(EOC),~\textit{i.e.},
	\begin{align*}
		\texttt{EOC}_i(e_i)\coloneqq \frac{\log(e_i)-\log(e_{i-1})}{\log(h_i)-\log(h_{i-1})}\,,\quad i=1,\ldots,6\,,
	\end{align*}
	where, for every $i=0,\ldots, 6$, we denote by $e_i$ a generic error quantity.
 
 	\subsection{Numerical experiments concerning the a priori error analysis}\label{subsec:num_a_priori}
	
	\hspace{5.5mm}In this subsection, we review the theoretical findings of Section \ref{sec:apriori}.\enlargethispage{5mm}

    More precisely, given the experimental setup of Subsection \ref{subsec:exp_setup}, we compute the error quantities
	\begin{align}\label{eq:error_quantities}
		\left.\begin{aligned}
			e_i^{\textup{tot},h_i}&\coloneqq 	\smash{\rho_{\textup{tot},h_i}^2(\Pi_{h_i}^{cr}u,\Pi_{h_i}^{rt} z)}\,,\\
			e_i^{\textup{gap},h_i}&\coloneqq 	\smash{\eta_{\textup{gap},h_i}^2(\Pi_{h_i}^{cr}u,\Pi_{h_i}^{rt} z)}\,,
		\end{aligned}\quad\right\}\quad i=0,\ldots,6\,.
	\end{align}
	Inasmuch as $z\in (W^{1,2}(\Omega))^2$ and $\zeta\in W^{1,2}(\Omega)$, Theorem \ref{thm:apriori_identity}(ii) predicts the error decay rate $\mathcal{O}(h_i^2)= \mathcal{O}(N_i)$, where $N_i\coloneqq\textup{dim}(\mathcal{R}T^0(\mathcal{T}_{h_i}))+\textup{dim}(\mathcal{L}^0(\mathcal{T}_{h_i}))$, ${i\in \mathbb{N}}$, for the discrete primal-dual total errors (\textit{cf}.\ \eqref{eq:discrete_primal_dual_error}), which are equal to the discrete primal-dual gap estimators~(\textit{cf}.~\eqref{eq:discrete_primal_dual_gap_estimator}), \textit{i.e.}, we expect~(\textit{cf}.~\mbox{Theorem}~\ref{thm:apriori_identity}(i))\enlargethispage{1mm}
	\begin{align*}
 \texttt{EOC}_i(e_i^{\textup{gap}})=\texttt{EOC}_i(e_i^{\textup{tot}})=2\,.
	\end{align*}

	 In Figure \ref{fig:apriori_exp1}, 
  for $C\in \{2.5,5.0,7.5,10.0\}$, we report the expected optimal convergence rate of about $\texttt{EOC}_i(e_i^{\textup{tot}})\approx\texttt{EOC}_i(e_i^{\textup{gap}})\approx  2$, $i=1,\ldots, 6$,
	\textit{i.e.}, an error decay of order 
	$\mathcal{O}(h_i^2)=  \mathcal{O}(N_i)$, $i=1,\ldots, 6$, 
 which confirms the optimality of the error decay rate derived in Theorem \ref{thm:apriori_identity}(ii).
 Moreover,~we~observe~that the \textit{a priori} error identity in Theorem~\ref{thm:apriori_identity}(i)~is~approximatively~satisfied. 
 Note that, in the case $C\leq 2$, the dual solution is an affine polynomial, so that the latter (up to machine precision) coincide with the discrete dual solution.
 In addition, in the case $C\leq 2$, the primal solution is a quadratic polynomial, so that, by \eqref{eq:grad_preservation}, (up to machine precision) we have that $\nabla_h\Pi_h^{cr}u=\nabla_h u_h^{cr}$ a.e.\ in $\Omega$. For this reason, we did not add error plots for the case $C\leq 2$.

	 \begin{figure}[H]
	 	\hspace*{-2mm}
	 	\centering
	 	\includegraphics[width=15cm]{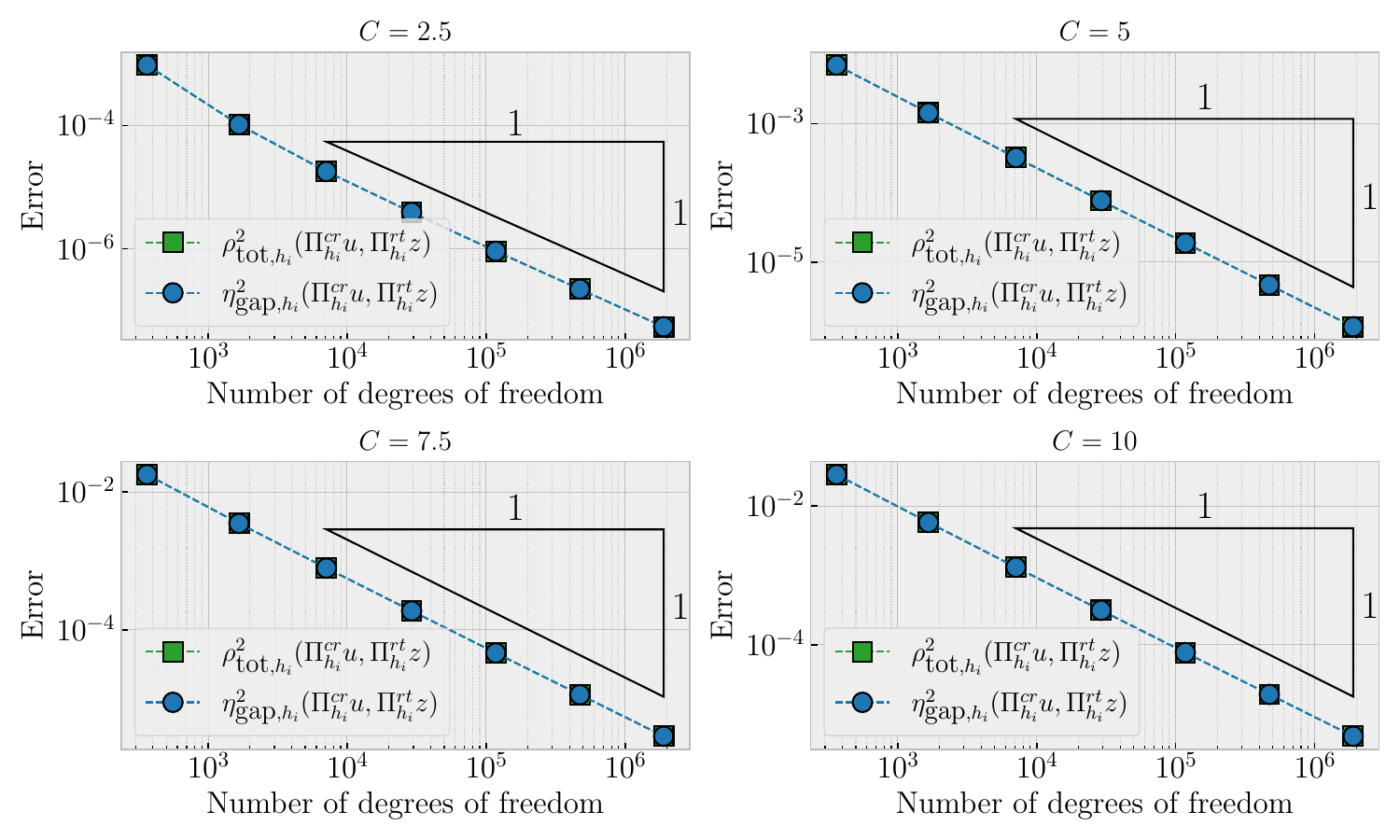}
	 	\caption{Logarithmic plots of the experimental convergence rates of the error~\mbox{quantities}~\eqref{eq:error_quantities}. For forcing terms  $C\in \{2.5,5.0,7.5,10.0\}$, we report the expected quadratic error decay rate, \textit{i.e.}, $\texttt{EOC}_i(e_i^{\textup{tot}})\approx\texttt{EOC}_i(e_i^{\textup{gap}})\approx  2$,~${i=1,\ldots, 6}$. In addition, we report that the discrete primal-dual error (\textit{cf}.\ \eqref{eq:discrete_primal_dual_error}) approximatively coincides with the discrete primal-dual gap estimator (\textit{cf}.\ \eqref{eq:discrete_primal_dual_gap_estimator}).}
	 	\label{fig:apriori_exp1}
	 \end{figure}

    \begin{figure}[H]
	 	\centering
	 	\includegraphics[width=7.5cm]{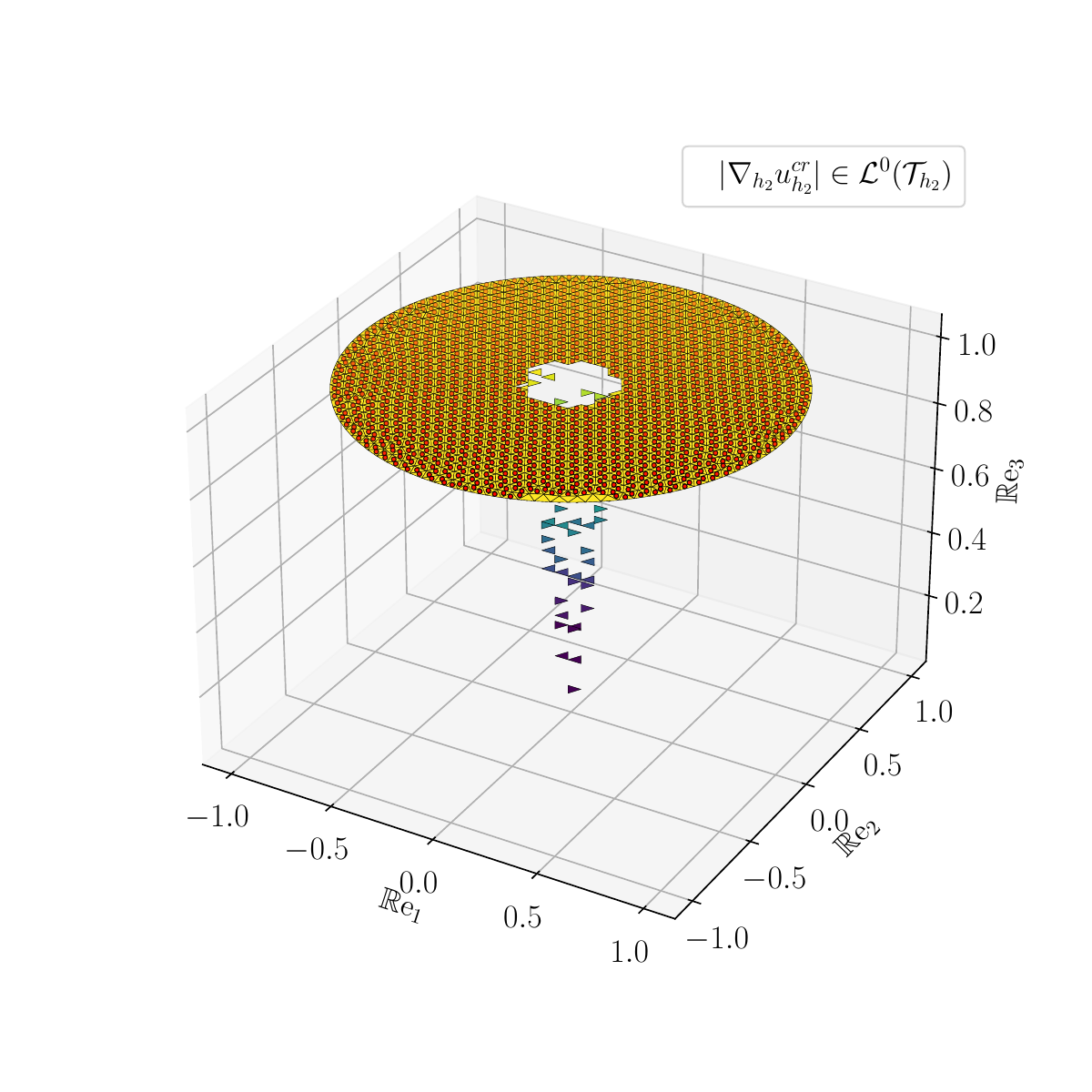}\includegraphics[width=7.5cm]{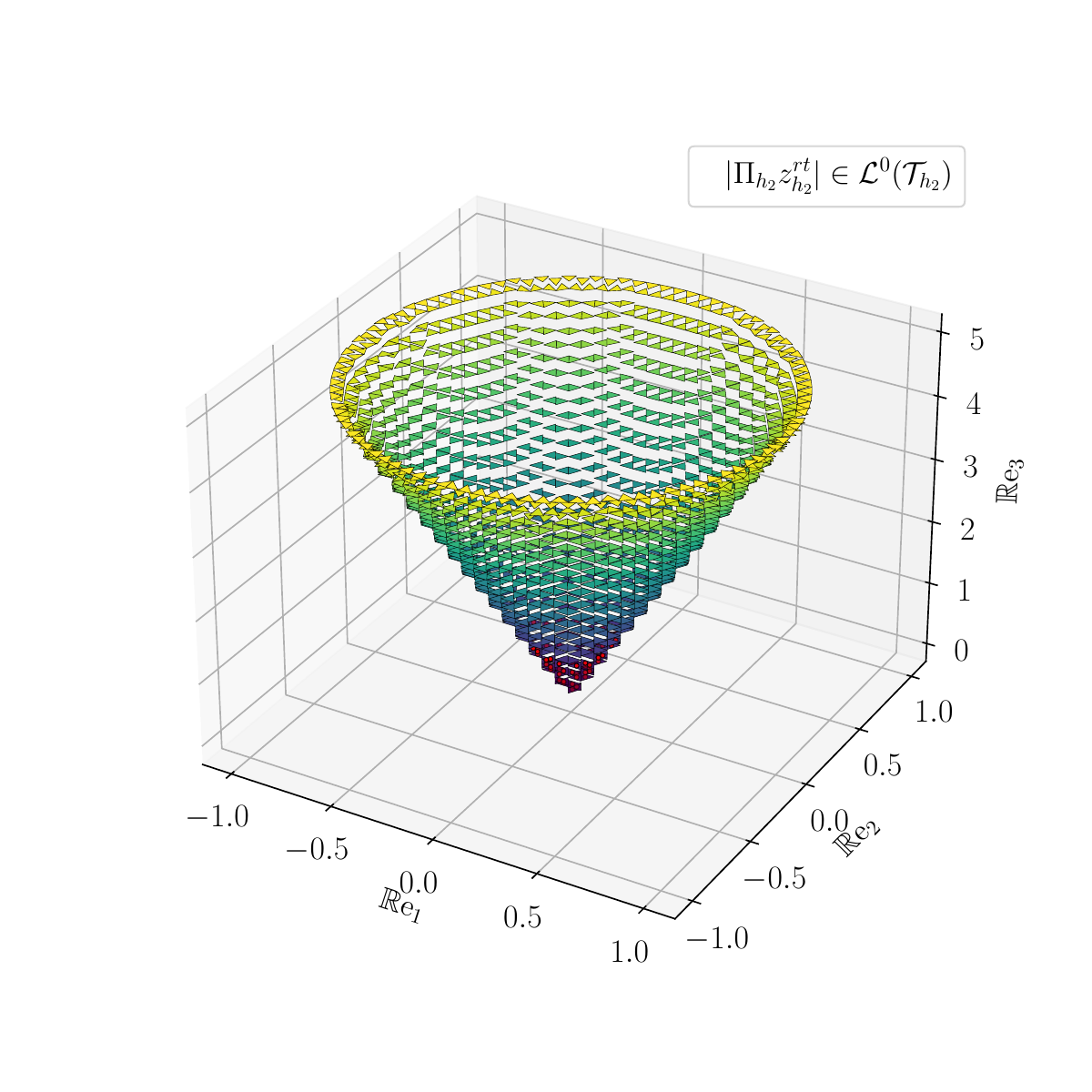}
	 	\caption{\textit{left}:  plot of  $\vert \nabla_{h_2}u_{h_2}^{cr}\vert\in \mathcal{L}^0(\mathcal{T}_{h_2})$, where red dots mark $T\in \mathcal{T}_{h_2}$ with $\vert \nabla_{h_2}u_{h_2}^{cr}\vert=1$~in~$T$; \textit{right}:
        plot of $\vert \Pi_{h_2} z_{h_2}^{rt}\vert\hspace{-0.1em}\in\hspace{-0.1em} \mathcal{L}^0(\mathcal{T}_{h_2})$, where red dots mark  $T\hspace{-0.1em}\in \hspace{-0.1em}\mathcal{T}_{h_2}$~with~${\vert \Pi_{h_2} z_{h_2}^{rt}\vert\hspace{-0.1em}<\hspace{-0.1em}1}$~in~$T$.~In~summary, We report that
        $\{\vert \nabla_{h_2} u_{h_2}^{cr}\vert=1\}=\{\vert \Pi_{h_2} z_{h_2}^{rt}\vert\ge 1\}$ and $\{\vert \nabla_{h_2} u_{h_2}^{cr}\vert<1\}=\{\vert \Pi_{h_2} z_{h_2}^{rt}\vert< 1\}$~as~predicted by the discrete convex optimality relation \eqref{eq:discrete_optimality}.}
	 \end{figure}

    \subsection{Numerical experiments concerning the a posteriori error analysis}\label{subsec:num_a_posteriori}
	
	\hspace{5.5mm}In this subsection, we review the theoretical findings of Section \ref{sec:aposteriori}.\enlargethispage{6mm}

    More precisely, given the experimental setup of Subsection \ref{subsec:exp_setup}, for every $i=1,\ldots,6$, we define the admissible approximation
     \begin{align*}
        \overline{u}_{h_i}^{cr}\coloneqq \frac{\Pi_{h_i}^{w1}u_{h_i}^{cr}}{\max\{1,\|\nabla\Pi_{h_i}^{w1}u_{h_i}^{cr}\|_{\infty,\Omega}\}}\in K\,,
    \end{align*}
    where $\Pi_{h_i}^{w1}\colon\mathcal{S}^{1,cr}_D(\mathcal{T}_{h_i})\to  W^{1,\infty}_D(\Omega)$ is a suitable quasi-interpolation operator, \textit{e.g.},
    \begin{align*}
        \Pi_{h_i}^{w1}\in \Big\{\Pi_{h_i}^{av,1},\Pi_{h_i}^{av,2},\Pi_{h_i}^{sz,1},\Pi_{h_i}^{sz,2},\Pi_{h_i}^{L^2,1},\Pi_{h_i}^{L^2,2}\Big\}\,,
    \end{align*}
     where, setting $\mathcal{S}^k_D(\mathcal{T}_{h_i})\coloneqq \mathcal{L}^k(\mathcal{T}_{h_i})\cap W^{1,\infty}_D(\Omega)$, for $k\in \{1,2\}$, we denote by
    \begin{itemize}[noitemsep,topsep=2pt,leftmargin=!,labelwidth=\widthof{(iii)},font=\itshape]
        \item[(i)] \hspace{-0.15em}$\Pi_{h_i}^{av,k}\hspace{-0.15em}\colon \hspace{-0.175em} \mathcal{S}^{1,cr}_D(\mathcal{T}_{h_i}) \hspace{-0.175em}\to \hspace{-0.175em} \mathcal{S}^k_D(\mathcal{T}_{h_i})$ \hspace{-0.2mm}the \hspace{-0.2mm}\textit{averaging \hspace{-0.2mm}quasi-interpolation \hspace{-0.2mm}operator}~\hspace{-0.2mm}from~\hspace{-0.2mm}\cite[\mbox{Subsec.}~\hspace{-0.2mm}22.4.1]{EG21I};
        \item[(ii)] \hspace{-0.15em}$\Pi_{h_i}^{sz,k}\hspace{-0.175em}\colon \hspace{-0.1em}\mathcal{S}^{1,cr}_D(\mathcal{T}_{h_i}) \hspace{-0.175em}\to \hspace{-0.175em}\mathcal{S}^k_D(\mathcal{T}_{h_i})$ \hspace{-0.1mm}the \hspace{-0.1mm}\textit{Scott--Zhang \hspace{-0.1mm}quasi-interpolation \hspace{-0.1mm}operator} \hspace{-0.1mm}from \hspace{-0.1mm}\cite[Appx.~\hspace{-0.1mm}A.3]{EG21I};
        \item[(iii)] \hspace{-0.15em}$\Pi_{h_i}^{L^2,k}\hspace{-0.15em}\colon \hspace{-0.175em}\mathcal{S}^{1,cr}_D(\mathcal{T}_{h_i}) \hspace{-0.175em}\to \hspace{-0.175em}\mathcal{S}^k_D(\mathcal{T}_{h_i})$ the \textit{(global) $L^2$-projection operator}  from \cite[Subsec.\ 22.5]{EG21I}.
    \end{itemize}
    
    Then, we compute the error quantities
	\begin{align}\label{eq:error_quantities}
		\left.\begin{aligned}
			e_i^{\textup{tot}}&\coloneqq 	\smash{\rho_{\textup{tot}}^2(\overline{u}_{h_i}^{cr},z_{h_i}^{rt})}\,,\\
			e_i^{\textup{gap}}&\coloneqq 	\smash{\eta_{\textup{gap}}^2(\overline{u}_{h_i}^{cr},z_{h_i}^{rt})}\,,
		\end{aligned}\quad\right\}\quad i=0,\ldots,6\,.
	\end{align}  

    In Figure \ref{fig:sub_optimality}, 
    for $C=10.0$, we report a reduced (compared to Subsection~\ref{subsec:num_a_priori}) convergence rate of about $\texttt{EOC}_i(e_i^{\textup{tot}})\approx\texttt{EOC}_i(e_i^{\textup{gap}})\approx  1$, $i=1,\ldots, 6$,
	\textit{i.e.}, an error decay of order 
	$\mathcal{O}(h_i)=  \mathcal{O}(N_i^{\frac{1}{2}})$, ${i=1,\ldots, 6}$. Moreover, we observe that the \textit{a posteriori} error identity in Theorem~\ref{thm:prager_synge_identity}(i) is approximatively satisfied.

    \begin{figure}[H]
        \hspace*{-2mm}
        \centering
        \includegraphics[width=15cm]{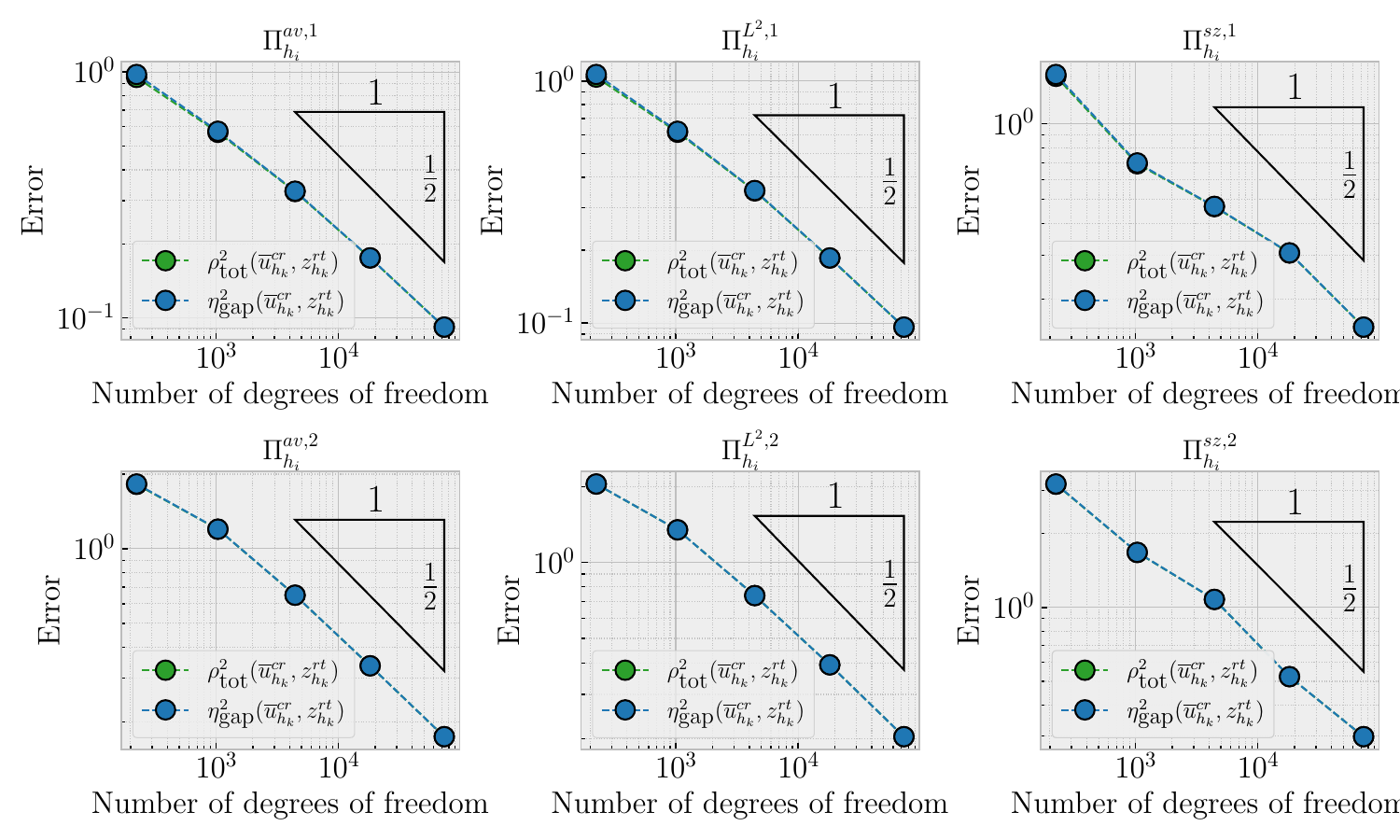}
        \caption{Logarithmic plots of the experimental convergence rates of the error~\mbox{quantities}~\eqref{eq:error_quantities}. For   $C=10.0$ and $\Pi_{h_i}^{w1}\in \{\Pi_{h_i}^{av,1},\Pi_{h_i}^{av,2},\Pi_{h_i}^{sz,1},\Pi_{h_i}^{sz,2},\Pi_{h_i}^{L^2,1},\Pi_{h_i}^{L^2,2}\}$, we report a reduced linear error decay rate, \textit{i.e.}, $\texttt{EOC}_i(e_i^{\textup{tot}})\approx\texttt{EOC}_i(e_i^{\textup{gap}})\approx  1$,~${i=1,\ldots, 6}$. In addition, we~report~that~the~primal-dual error (\textit{cf}.\ \eqref{def:primal_dual_total_error}) approximatively coincides with the primal-dual gap estimator~(\textit{cf}.~\eqref{eq:primal-dual.1}).}
        \label{fig:sub_optimality}
    \end{figure}
 
    \section{Conclusion and Outlook}

    \hspace{5mm}A (Fenchel) duality theory for variational problems with gradient constraints was developed. On the basis of this (Fenchel) duality theory, an \textit{a posteriori} error identity was derived, which can be seen as a generalization of the Prager--Sygne identity (\textit{cf}.\ \cite{PraSyn47})~for~the~Poisson~problem. 
    To make the \textit{a posteriori} error identity numerically practicable, it is necessary to approximate in a computationally inexpensive way the primal problem and the dual problem~at~the~same~time. To this end, exploiting orthogonality relations between the Crouzeix--Raviart element and the Raviart--Thomas element, 
     all (Fenchel) duality relations from the continuous level are transferred to a discrete level and reconstruction of a primal solution from a given dual solution is derived, using the so-called inverse generalized Marini formula. The inverse generalized Marini formula enabled us to approximate the primal problem and the dual problem at the same time using only the Raviart--Thomas element. In addition, the discrete (Fenchel) duality theory allowed us to derive an \textit{a posteriori} error identity also on the discrete level, which, eventually, turned out the be an \textit{a priori} error identity and enabled us to derive explicit error decay rates depending on the regularity of the dual solution and the obstacle function.
     Numerical experiments were carried out that confirm the optimality of the derived error decay rates and the validity of both the \textit{a priori} error identity and the \textit{a posteriori} error identity.
     Still open is the question of the accurate post-processing of the discrete primal solution to obtain an admissible approximation of the primal solution. Simple node-averaging and, subsequently, global scaling with the gradient length does not approximate the gradient in $(L^\infty(\Omega))^d$ with the desired accuracy. Instead a quasi-interpolation operator, which either preserves the gradient length or at least approximates the later~with~the~needed~accuracy, should be employed. In the end, this would allow us to use the local refinement indicators~induced\linebreak by the primal-dual gap estimator for adaptive mesh-refinement. The latter will be content of future research.

	{\setlength{\bibsep}{0pt plus 0.0ex}\small
		
        \providecommand{\bysame}{\leavevmode\hbox to3em{\hrulefill}\thinspace}
\providecommand{\noopsort}[1]{}
\providecommand{\mr}[1]{\href{http://www.ams.org/mathscinet-getitem?mr=#1}{MR~#1}}
\providecommand{\zbl}[1]{\href{http://www.zentralblatt-math.org/zmath/en/search/?q=an:#1}{Zbl~#1}}
\providecommand{\jfm}[1]{\href{http://www.emis.de/cgi-bin/JFM-item?#1}{JFM~#1}}
\providecommand{\arxiv}[1]{\href{http://www.arxiv.org/abs/#1}{arXiv~#1}}
\providecommand{\doi}[1]{\url{https://doi.org/#1}}
\providecommand{\MR}{\relax\ifhmode\unskip\space\fi MR }
\providecommand{\MRhref}[2]{%
  \href{http://www.ams.org/mathscinet-getitem?mr=#1}{#2}
}
\providecommand{\href}[2]{#2}

	}
	
\end{document}